\newtheorem{theorem}{Theorem}
\newtheorem{lemma}[theorem]{Lemma}
\newtheorem{proposition}[theorem]{Proposition}
\newtheorem{remark}[theorem]{Remark}
\newenvironment{proof}[1][Proof]{\noindent\textbf{#1.} }{\ \rule{0.5em}{0.5em}}
\author[a]{Ivo J.B.F. Adan\footnote{i.adan@tue.nl}}
\author[b]{Ioannis Dimitriou \footnote{ idimit@uoi.gr}\footnote{Corresponding author.}}
\affil[a]{\small Department of Industrial Engineering and Innovation Sciences, Eindhoven University of Technology, P.O. Box 513, Eindhoven, MB 5600, the Netherlands}
\affil[b]{\small Department of Mathematics, 
	University of Ioannina, 
	45110, Ioannina, Greece.}
\begin{document}
\title{A finite compensation procedure for a class of two-dimensional random walks}

\maketitle
\begin{abstract}
    Motivated by queueing applications, we consider a  class of two-dimensional random walks, the invariant measure of which can be written as a linear combination of a finite number of product-form terms. In this work, we investigate under which conditions such an elegant solution can be derived by applying a finite compensation procedure. The conditions are formulated in terms of relations among the transition probabilities in the inner area, the boundaries as well as the origin. A discussion on the importance of these conditions is also given.
    \end{abstract}
    \vspace{2mm}
	
	\noindent
	\textbf{Keywords}: {Finite compensation procedure; Two-dimensional random walks; Invariant measure.}
\section{Introduction}\label{intro}
In this work, motivated by queueing problems modeled as random walks on multi-dimensional grids, we investigate under which conditions the solution to the equilibrium equations of a certain class of two-dimensional random walks can be given as a linear combination of a finite number of product-form terms. An elegant method to construct such a linear combination, essentially consists of first finding the product-form terms satisfying the interior equilibrium equations, by confronting these solutions with the boundary equations, and then building a linear combination that also satisfies the boundary equilibrium equations. 

The most simple application of this method refers to the case where a single product-form satisfies the interior, as well as the boundary equilibrium equations. Specifically, product-form queueing networks belong to this class of multi-dimensional random walks; e.g., see \cite{baskett}, \cite[Chapters 1, 5, 6]{bouch}. See also \cite[Chapter 9]{bouch} for queueing networks that do not have a product-form invariant measure, but can be approximated by perturbing the transition probabilities so as to obtain a product-form invariant measure.

Under certain conditions, the compensation method developed in \cite{ivothesis,adanaplprob}, constructs a linear combination of (infinite or finite) product-form terms that satisfy the equilibrium equations in the interior of the quarter plane, and are chosen such that the equilibrium equations on the boundaries are satisfied as well. In particular, the compensation method implies that the invariant measure of a two-dimensional random walk can be written as an infinite series of product-form terms for all states away of the origin, provided the following fundamental conditions are satisfied \cite{ivothesis,adanaplprob}:
\begin{enumerate}
     \item \textit{Step size:} only transitions to neighboring states are allowed;
\item \textit{Forbidden transitions:} transitions from any interior state to the East, North, North-East are never allowed;
\item \textit{Semi-homogeneity:} for all interior states, the transitions occur at the same rates, and similarly for all states on the horizontal boundary, and for all states on the vertical boundary. 
\end{enumerate}

Our work is strongly motivated by queueing problems which can be solved by a linear combination of product-form terms. In some cases, e.g., $E_{k}/E_{r}/c$  queues \cite{adaner}, or in specific multidimensional queues \cite{sele,semi,lock}, finitely many terms are needed. In other cases, e.g., the shortest queue problem \cite{adan1,adan2}, the multiprogramming queues problem \cite{adanmult}, the $2\times 2$ clocked buffered switch of an interconnection network \cite{boxhout}, etc, infinitely many terms are necessary. For the latter model, the compensation method was further extended to a three-dimensional case \cite{hout}. Moreover, the authors in \cite{boxhout} indicated a link between the compensation approach and the boundary value method \cite{cohbox,fay1}. The boundary value method aims to solve the equilibrium equations by introducing the generating function of the equilibrium distribution, and studying the functional equations that it should satisfy. Typically, in solving these functional equations formidable difficulties may arise. However, for a class of two-dimensional random walks and several queueing problems, the technique developed in \cite{cohbox,fay1} reduces those functional equations to standard Riemann(-Hilbert) boundary value problems, and to singular integral equations for complex-valued functions. A concise exposition of the method, along with several
applications and references, was presented in \cite{coh7}, while a detailed investigation of random walks in the quarter plane was further continued by J.W. Cohen in \cite{coh8}, which among others, provided insight on the ergodicity conditions and the importance of the boundary hitting points. In \cite{coh1,coh2}, J.W. Cohen considerably contributed to the better understanding of the link among the compensation method and the boundary value method. In particular, in \cite{coh1} he studied a class of two-dimensional nearest neighbour random walks without transitions to the North, North-East and East, i.e., the model considered first in \cite{ivothesis}, and showed that the bivariate generating function of the stationary distribution can be represented by a meromorphic function, i.e., an analytic function apart from a finite number of poles in every finite domain. These poles were presented as powers in the product-form terms in \cite{ivothesis}. In \cite{coh2,coh4,coh3}, by studying the two-dimensional shortest queue model, he showed how all poles, and all zeros, of the
meromorphic generating function can be determined from the original functional equation. His work led to simple expressions for the main performance metrics, which can be calculated with
any desired accuracy. Cohen also applied his approach to the symmetric \cite{coh5}, and the asymmetric \cite{coh6} $2\times 2$ clocked buffered switch.

In our work, we cope with a certain class of two-dimensional random walks by allowing transitions from the interior to the North and to the East (thus, violating a fundamental assumption of the \textit{standard} compensation approach), and apply a finite compensation procedure. More precisely, the invariant measure of this class of two-dimensional random walk can be written as a \textit{finite} sum of product-form terms. This work is strongly motivated by the non-work conserving discrete time two-queue system with Bernoulli arrivals that was recently analyzed in \cite{devos}. For this model, by using the generating function technique and complex analytic arguments, the authors solved the functional equation and derived the stationary joint queue-length distribution as a sum of three product-form terms. Although their work was based on a powerful mathematical method, it did not reveal the special features of the corresponding two-dimensional random walk that help to have such an elegant solution. Contrary to the work in \cite{devos}, the analysis in the present paper is based directly on the set of equilibrium equations, which enables a thorough investigation of the conditions that are responsible for having such an elegant solution. Recently, in \cite{chen,chen2}, the authors provided necessary conditions which characterize random walks, the invariant measure of which is a sum of geometric terms. They also developed an approximation scheme and error bounds for two-dimensional random walks, the invariant measure of which is not a sum of geometric terms; see also \cite{Goseling2016} for an approach to approximate by a single product-form the stationary distribution of random walks in the quarter plane. In line with the works in \cite{chen,chen2}, the author in \cite[Chapters 5, 6]{bai} proposed perturbation schemes for two dimensional random walks. In particular, she considered constructing inhomogeneous transition rates for the perturbed random walk such that its stationary distribution is given as a sum of a finite number of geometric terms, and gave an explicit expression for the error bound. We also mention \cite{diek}, which refers to a reflected Brownian motion with constraints on the boundary transition probabilities (i.e., conditions on the covariance and the reflection matrix), and their results are similar to those reported in the present paper. In particular, they showed that for the invariant measure to be a linear combination of finitely many exponential measures, there must be an odd number of terms generated by a mating procedure, which operates in a similar fashion as the compensation procedure. This mating procedure is terminated after a finite number of steps, when at the final step we end up with the initial measure. Our work in the present paper can be seen as a discrete state space analogue of the method used in \cite{diek}.

The goal of the present paper is to investigate whether it is possible to generalize the
compensation approach to obtain the invariant measure with a finite number of product-form terms, when at the same time we violate a fundamental requirement, i.e., by allowing transitions from the interior to the North and East.
\paragraph{Contribution.} We characterize a class of two dimensional random walks for which the invariant measure can be obtained as a sum of exactly three product-form terms by using a finite compensation procedure, i.e., after introducing the first term, new terms are subsequently added to
compensate for the error of the previous term on one of the two boundaries. By revealing structural properties of the transition probabilities, we formulate conditions under which the compensation procedure stops after exactly three steps, and results in the same solution, irrespective whether compensation is started on the vertical or horizontal boundary.  

The rest of the paper is summarized as follows. In Section \ref{general}, we describe the general model in detail, and present our finite compensation approach step-by-step. A modification of the general model, which requires only a single product-form term, is also given. In Section \ref{motivation}, we apply the theoretical results of Section 2, to two queueing models. We also study a queueing model with geometric batch arrivals and using a similar methodological framework we show that its stationary distribution is of product-form. In Section \ref{discussion}, we briefly discuss the importance of the conditions that are used in Section \ref{general}, and yield such an elegant result. Some queueing examples that violate these conditions and do not have such an elegant solution, are also given. We also discussed an example where we show how the conditions are adapted when one of them is violated in order to retain a product-form solution. Finally, some future research directions are discussed.  
\section{The model and the equilibrium equations}\label{general}
Consider a two-dimensional random walk $Q := \{Q_{s}, s=0,1,\ldots\}$ having state space $S=\mathbb{Z}_{+}^{2} = \{(m,n); m,n\in\mathbb{Z}_{+}\}$, where $\mathbb{Z}_{+}$ represents the set of nonnegative integers.  Further associated with $S$ are the subsets $\{H_{n}\}_{n \geq 0}$, $\{V_{m}\}_{m \geq 0}$, where
\begin{displaymath}
\begin{array}{lr}
    V_{m} := \{(m,n):  n \in \mathbb{Z}_{+}\},\,m\in\mathbb{Z}_{+}, &
      H_{n} := \{(m,n):  m \in \mathbb{Z}_{+}\},\,n\in\mathbb{Z}_{+}. 
\end{array}
\end{displaymath}
In particular, we refer to $H_{0}\setminus\{(0,0)\}$ as the horizontal boundary of $S$, $V_{0}\setminus\{(0,0)\}$ as the vertical boundary of $S$, and $I := S \setminus (H_{0} \cup V_{0})$ as the interior of $S$.

We consider {\em{nearest-neighbor}} two dimensional random walks, meaning that from any state $(m,n)$, a transition takes place to state $(i,j) \in S$, such that $\max(|m - i|, |n - j|) \leq 1.$ 
Denote by $p_{k,l}(m,n)$ the one step transition probabilities from $(m,n)$ to $(m+k,n+l)$, and let $\mathbf{P} := [p_{(m,n),(m^{\prime},n^{\prime})}]$ be its transition matrix, i.e., $p_{(m,n),(m+k,n+l)}=p_{k,l}(m,n)$. The process is homogeneous in the sense that for each pair $(m,n)$, $(m^{\prime},n^{\prime})$ in the interior (respectively on the horizontal
 and on the vertical boundary) of $S$,
\begin{displaymath}
p_{k,l}(m,n)=p_{k,l}(m^{\prime},n^{\prime})\text{ and }p_{k,l}(m-k,n-k)=p_{k,l}(m^{\prime}-k,n^{\prime}-k),
\end{displaymath} 
for all $-1\leq k,l\leq 1$. Let $q_{k,l}:=p_{k,l}(m,n)$, $(m,n)\in I$, $q_{k,l}^{(h)}:=p_{k,l}(m,0)$, $(m,0)\in H_{0}\setminus\{(0,0)\}$, with $q_{k,-1}^{(h)}=0$, $-1\leq k\leq 1$, $q_{k,l}^{(v)}:=p_{k,l}(0,n)$, $(0,n)\in V_{0}\setminus\{(0,0)\}$, with $q_{-1,l}^{(v)}=0$, $-1\leq l\leq 1$, and $q_{k,l}^{(0)}:=p_{k,l}(0,0)$, with $q_{k,l}^{(0)}=0$, when $k=-1$ or $l=-1$. All other unspecified transition probabilities within $\mathbf{P}$ are equal to zero. Apart from these assumptions, the random walk $Q$, and also the component chains (i.e., the chains associated with the sets $H_{n}$, $n\in \mathbb{Z}_{+}$, $V_{m}$, $m\in \mathbb{Z}_{+}$, namely the $m-$component and the $n-$component chain, respectively), are assumed to be irreducible and aperiodic.
%

Our objective is to derive -- when it exists -- the stationary distribution $\boldsymbol{\pi} := [\pi_{m,n}]_{m,n \in \mathbb{Z}_{+}}$ of this random walk, and to determine nontrivial sufficient conditions for when each element of $\boldsymbol{\pi}$ can be expressed as a finite linear combination of product-form terms. With that in mind, some structural conditions on the transition probabilities should be imposed. 
First, we focus on random walks for which transitions to the North-East and South-West from any interior state are forbidden; see Figure \ref{p1} for an illustration of the transition structure of $Q$  (see also Subsection \ref{genn} for a discussion on relaxing this condition).\vspace{2mm}\\
\textbf{Condition A:} $q_{1,1}=q_{-1,-1}=0$. \\
Since our main goal is to establish conditions, so that the invariant measure can be written as a sum of a finite number of product-form terms, one may expect that some useful information can be derived from the marginals. More precisely, the following condition (named Condition B), helps to show that the marginal probabilities can be found explicitly, and are of geometric form.\vspace{2mm}\\
\textbf{Condition B:} This condition relates the transition probabilities in the interior with those at the boundaries (Condition B.1), as well as, the transition probabilities at the origin, with those in the interior and the boundaries (Condition B.2). 
In particular, Condition B.1 reads:
\begin{displaymath}
\begin{array}{rlcrl}
     q_{1,1}^{(h)} =& q_{1,0},&&  q_{1,1}^{(v)} = &q_{0,1},\vspace{2mm}\\
     q_{1,0}^{(h)} =& q_{1,-1},&& q_{0,1}^{(v)} = &q_{-1,1},\vspace{2mm}\\
     q_{-1,0}^{(h)} =& q_{-1,0}, &&q_{0,-1}^{(v)} = &q_{0,-1},\vspace{2mm}\\
     q_{-1,1}^{(h)} =& q_{-1,1},&&q_{1,-1}^{(v)} = &q_{1,-1}.
\end{array}
\end{displaymath}
%
%
Note that when $\mathbf{P}$ satisfies Conditions A, B.1, the following observation holds: 
\begin{displaymath}
\begin{array}{rlrl}
     q_{1,1}^{(h)}+q_{1,0}^{(h)} = &q_{1,0} + q_{1,-1},& q_{-1,0}^{(h)}+q_{-1,1}^{(h)}=&q_{-1,0}+q_{-1,1} \vspace{2mm}\\
     q_{1,1}^{(v)}+q_{0,1}^{(v)} =&q_{0,1}+q_{-1,1} ,& q_{0,-1}^{(v)}+q_{1,-1}^{(v)}=&q_{0,-1}+q_{1,-1},\vspace{2mm}\\
    q_{0,0}^{(v)} + q_{1,0}^{(v)} = &q_{0,0} + q_{-1,0} + q_{1,0},& q_{0,0}^{(h)} + q_{0,1}^{(h)} = &q_{0,0} + q_{0,-1} + q_{0,1}
\end{array}
\end{displaymath}
Condition B.2 reads:
\begin{equation}
    \begin{array}{rl}
q_{0,1}^{(0)}+q_{0,1}=&q_{0,1}^{(h)}+q_{0,1}^{(v)},\vspace{2mm}\\
q_{1,0}^{(0)}+q_{1,0}=&q_{1,0}^{(h)}+q_{1,0}^{(v)},\vspace{2mm}\\
q_{1,1}^{(0)}=&q_{1,1}^{(h)}+q_{1,1}^{(v)},
\end{array}\label{cvc}
\end{equation}
Note that, summing equations \eqref{cvc} results, after some algebra, in $q_{0,0}^{(0)}+q_{0,0}=q_{0,0}^{(h)}+q_{0,0}^{(v)}$.

A direct consequence of Conditions A, B.1, B.2, is the following:
\begin{displaymath}
\begin{array}{rl}
  q_{1,0}^{(0)} + q_{1,1}^{(0)} =& q_{1,0}^{(0)} + q_{1,1}^{(h)} + q_{1,1}^{(v)} \\ =& q_{1,0}^{(0)} + q_{1,0} + q_{0,1} \\ =& q_{1,0}^{(h)} + q_{1,0}^{(v)} + q_{0,1} \\= & q_{1,-1} + q_{1,0}^{(v)} + q_{0,1},\vspace{2mm}\\
  q_{0,1}^{(0)} + q_{1,1}^{(0)} =& q_{0,1}^{(0)} + q_{1,1}^{(h)} + q_{1,1}^{(v)} \\= & q_{0,1}^{(0)} + q_{0,1} + q_{1,0} \\ =& q_{0,1}^{(h)} + q_{0,1}^{(v)} + q_{1,0} \\ =& q_{0,1}^{(h)} + q_{-1,1} + q_{1,0}.
\end{array}
\end{displaymath}
\begin{figure}[ht!]
\centering
\includegraphics[scale=1]{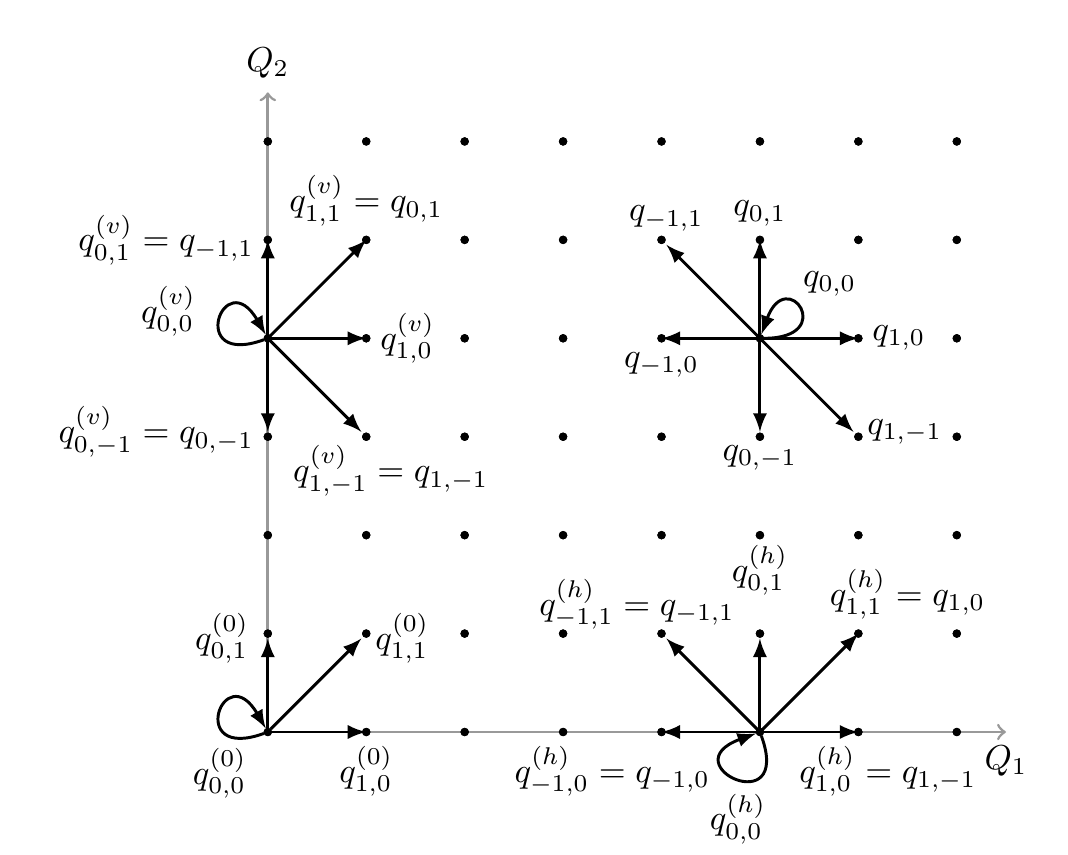}
\caption{The transition diagram.}\label{p1}
\end{figure}

Assume that $Q$ is positive recurrent (see Theorem \ref{stabi} below), having stationary distribution $\boldsymbol{\pi} := [\pi_{m,n}]_{m,n \in \mathbb{Z}_{+}}$, and let its marginal distributions for the $m-$component and $n-$component chain, denoted by $[\pi_{m}^{(1)}]_{m \in \mathbb{Z}_{+}}$, $[\pi_{n}^{(2)}]_{n \in \mathbb{Z}_{+}}$, respectively:
\begin{displaymath}
\pi_{m}^{(1)}=\sum_{n\in H_{n}}\pi_{m,n},\,m\in\mathbb{Z}_{+},\,\,\pi_{n}^{(2)}=\sum_{m\in V_{m}}\pi_{m,n},\,n\in\mathbb{Z}_{+}.
\end{displaymath}
Explicit formulae for the marginal distributions are directly derived by applying the balance principle
\begin{displaymath}
\text{the rate out of set }W=\text{the rate into set }W,
\end{displaymath}
to the sets $H_{n}$, $V_{m}$, $m,n\in \mathbb{Z}_{+}$ (see also Figure \ref{p1}). In particular, for $m > 1$,
\[
(q_{-1,0}+q_{-1,1})\pi_{m}^{(1)} = (q_{1,0}+q_{1,-1})\pi_{m-1}^{(1)},
\]
so
\[
\pi_{m}^{(1)}=\rho_{1}\pi_{m-1}^{(1)} = \cdots = \rho_{1}^{m-1}\pi_{1}^{(1)} ,
\]
where $\rho_{1}:=\frac{q_{1,0}+q_{1,-1}}{q_{-1,0}+q_{-1,1}}$. Moreover, due to Condition B.2,
\begin{displaymath}
\begin{array}{rl}
     (q_{-1,0}+q_{-1,1})\pi_{1}^{(1)}=&(q_{1,0}^{(v)}+q_{1,-1}+q_{0,1})\pi_{0}^{(1)}\Leftrightarrow \pi_{1}^{(1)}=\frac{q_{1,0}^{(v)}+q_{1,-1}+q_{0,1}}{q_{1,0}+q_{1,-1}}\rho_{1}\pi_{0}^{(1)},
\end{array}
\end{displaymath}
thus,
\begin{equation}
    \pi_{m}^{(1)}=\frac{q_{1,0}^{(v)}+q_{1,-1}+q_{0,1}}{q_{1,0}+q_{1,-1}}\rho_{1}^{m}\pi_{0}^{(1)},\quad m\geq 1.\label{mar11}
\end{equation}
Similarly,
\begin{equation}
    \pi_{n}^{(2)}=\frac{q_{0,1}^{(h)}+q_{-1,1}+q_{1,0}}{q_{0,1}+q_{-1,1}}\rho_{2}^{n}\pi_{0}^{(2)},\quad n\geq 1,\label{mar22}
\end{equation}
where $\rho_{2}:=\frac{q_{0,1}+q_{-1,1}}{q_{0,-1}+q_{1,-1}}$.

Having in mind Conditions A, B (i.e., Conditions B.1, B.2), it is readily seen that the component Markov chains are random walks on $\mathbb{Z}_{+}$, with negative drift, and thus ergodic when, $\rho_{1}<1$, and $\rho_{2}<1$. 
In particular, using \eqref{mar11}, \eqref{mar22} we get the following geometric distributions:
\begin{equation}
    \pi_{m}^{(1)}=\left\{\begin{array}{ll}
         \frac{(1-\rho_{1})(q_{1,0}+q_{1,-1})}{(1-\rho_{1})(q_{1,0}+q_{1,-1})+\rho_{1}(q_{1,0}^{(0)}+q_{1,1}^{(0)})}\frac{q_{1,0}^{(v)}+q_{1,-1}+q_{0,1}}{q_{1,0}+q_{1,-1}}\rho_{1}^{m},&m\geq 1,\\
         \frac{(1-\rho_{1})(q_{1,0}+q_{1,-1})}{(1-\rho_{1})(q_{1,0}+q_{1,-1})+\rho_{1}(q_{1,1}^{(0)}+q_{1,0}^{(0)})},&m=0, 
    \end{array}\right.\label{mar1}
\end{equation}
\begin{equation}
    \pi_{n}^{(2)}=\left\{\begin{array}{ll}
         \frac{(1-\rho_{2})(q_{0,1}+q_{-1,1})}{(1-\rho_{2})(q_{0,1}+q_{-1,1})+\rho_{2}(q_{0,1}^{(0)}+q_{1,1}^{(0)})}\frac{q_{0,1}^{(h)}+q_{-1,1}+q_{1,0}}{q_{0,1}+q_{-1,1}}\rho_{2}^{n},&n\geq 1,\\
         \frac{(1-\rho_{2})(q_{0,1}+q_{-1,1})}{(1-\rho_{2})(q_{0,1}+q_{-1,1})+\rho_{2}(q_{0,1}^{(0)}+q_{1,1}^{(0)})},&n=0. 
    \end{array}\right.\label{mar2}
\end{equation}
\begin{remark}
The simple form of the marginal distributions in \eqref{mar1}, \eqref{mar2} is due to the Conditions A, B. For example, for all $m>0$ (see Figure \ref{p1}), the total flow rate to the right equals $q_{1,0}+q_{1,-1}$ and the total flow rate to the left
equals $q_{-1,0}+q_{-1,1}$.
\end{remark}

It can be easily shown that the random walk $Q$ is positive recurrent if and only if both component random walks are positive recurrent, i.e., if and only if the component random walks
have negative drifts \cite{fayo}. The next theorem provides a necessary and sufficient condition for the ergodicity of $Q$.
\begin{theorem}\label{stabi}
Assume that conditions A, B.1 are satisfied. Then, $Q$ is ergodic if and only if
\begin{equation}
\rho_{1}<1,\text{ and }\rho_{2}<1.
\label{stab}
\end{equation}
\end{theorem}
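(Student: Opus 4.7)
The plan is to reduce ergodicity of the two-dimensional walk $Q$ to ergodicity of its two one-dimensional component chains, and then verify the classical drift criterion for each. The reduction---that $Q$ is positive recurrent if and only if both component chains are positive recurrent---has already been invoked from \cite{fayo} in the paragraph preceding the theorem, so what remains is to identify the component chains precisely and read off the corresponding drift conditions.

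The next step is to argue that, under Conditions A and B.1, the $m$-component chain is a nearest-neighbour Markov chain on $\mathbb{Z}_+$ whose one-step transition rates are homogeneous for $m\geq 1$. Applying the balance principle to the column $V_m$ with $m\geq 1$, exactly as in the derivation of \eqref{mar11}, the aggregate rate of transitions from $V_m$ to $V_{m+1}$ equals $q_{1,0}+q_{1,-1}$, and the aggregate rate from $V_m$ to $V_{m-1}$ equals $q_{-1,0}+q_{-1,1}$. Here Condition A kills the diagonal contributions $q_{1,1}$ and $q_{-1,-1}$, while Condition B.1 (via the identities $q_{1,1}^{(h)}=q_{1,0}$, $q_{1,0}^{(h)}=q_{1,-1}$, $q_{-1,0}^{(h)}=q_{-1,0}$, $q_{-1,1}^{(h)}=q_{-1,1}$) ensures that the flow out of the boundary row $H_0$ agrees with the interior flow rates, making the resulting birth-death structure uniform in $m\geq 1$. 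Symmetrically, the $n$-component chain is a nearest-neighbour Markov chain on $\mathbb{Z}_+$ with interior upward rate $q_{0,1}+q_{-1,1}$ and downward rate $q_{0,-1}+q_{1,-1}$.

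For each such one-dimensional chain, which was assumed irreducible and aperiodic, the classical criterion states that positive recurrence is equivalent to the interior drift being negative. For the $m$-component chain this reads $q_{1,0}+q_{1,-1}<q_{-1,0}+q_{-1,1}$, i.e., $\rho_1<1$; analogously one obtains $\rho_2<1$ for the $n$-component chain. Combining these two equivalences with the reduction from \cite{fayo} yields the theorem. I expect the only delicate step to be the verification that the aggregated column-to-column (respectively row-to-row) flow rate really is constant for all $m\geq 1$ (respectively $n\geq 1$); this is precisely where Conditions A and B.1 enter in an essential way, but the required computation is already contained in the flow-balance argument the authors carried out when deriving \eqref{mar11} and \eqref{mar22}, so no new work is needed beyond collecting these facts.
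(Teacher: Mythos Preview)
Your argument is correct but takes a different route from the paper. The paper applies Theorem~1.2.1 of \cite{fayo} directly: it computes the interior drift $M=(M_x,M_y)$ and the boundary drifts $(M_x^h,M_y^h)$, $(M_x^v,M_y^v)$, notes that Conditions~A and~B.1 force $M_x^h=M_x$ and $M_y^v=M_y$, and then runs through the three alternatives of that theorem, showing that alternative~1 reduces to $M_x<0$, $M_y<0$ (equivalently $\rho_1<1$, $\rho_2<1$) while alternatives~2 and~3 are never satisfied. You instead lean on the sentence immediately preceding the theorem---the reduction of ergodicity of $Q$ to ergodicity of the two component chains---and then read off the one-dimensional drifts. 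Your route is more intuitive, and the observation that A and B.1 make the aggregate column-to-column transition probabilities constant for $m\geq 1$ is exactly right. Two small caveats: that preceding sentence is really a preview of what the theorem is about to establish via \cite{fayo} rather than an independent lemma, so treating it as an input has a mildly circular flavour; and under only A and B.1 the marginal process need not be Markov at $m=0$ (the transition $0\to 1$ out of $V_0$ can depend on whether $n=0$ or $n>0$, equality there requiring B.2), though this is immaterial for the positive-recurrence conclusion since only the drift for large $m$ matters.
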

\begin{proof}
The proof is based on \cite[Theorem 1.2.1]{fayo}. Define,
\begin{displaymath}
\begin{array}{rl}
     M=(M_{x},M_{y})=&(\sum_{i,j}iq_{i,j},\sum_{i,j}jq_{i,j})=(q_{1,0}+q_{1,-1}-(q_{-1,1}+q_{-1,0}),q_{0,1}+q_{-1,1}-(q_{1,-1}+q_{0,-1})),  \\
    (M_{x}^{h},M_{y}^{h})=&(\sum_{i,j}iq^{(h)}_{i,j},\sum_{i,j}jq_{i,j}^{(h)})=(M_{x},q_{-1,1}+q_{0,1}^{(h)}+q_{1,0}),\\
    (M_{x}^{v},M_{y}^{v})=&(\sum_{i,j}iq^{(v)}_{i,j},\sum_{i,j}jq_{i,j}^{(v)})=(q_{1,-1}+q_{1,0}^{(v)}+q_{0,1},M_{y}).
\end{array}
\end{displaymath}
Assuming $M\neq 0$, $Q$ is ergodic \cite[Theorem 1.2.1]{fayo} if and only if one of the following conditions holds:
\begin{enumerate}
    \item $M_{x}<0$, $M_{y}<0$, $M_{x}M_{y}^{h}-M_{y}M_{x}^{h}<0$, $M_{y}M_{x}^{v}-M_{x}M_{y}^{v}<0$,
    \item $M_{x}<0$, $M_{y}\geq 0$, $M_{y}M_{x}^{v}-M_{x}M_{y}^{v}<0$,
    \item $M_{x}\geq 0$, $M_{y}<0$, $M_{x}M_{y}^{h}-M_{y}M_{x}^{h}<0$.
\end{enumerate}
Note from Condition 1. that $M_{x}<0$, $M_{y}<0$, is equivalent to $\rho_{1}<1$, $\rho_{2}<1$, respectively, and 
\begin{displaymath}
M_{x}M_{y}^{h}-M_{y}M_{x}^{h}= M_{x}(M_{y}^{h}-M_{y})<0,
\end{displaymath}
since $M_{x}<0$, $M_{y}^{h}>0$, $M_{y}<0$. Similarly, 
$M_{y}M_{x}^{v}-M_{x}M_{y}^{v}<0$, also holds. Thus, $\rho_{1}<1$, $\rho_{2}<1$ is the ergodicity condition.

Note that Conditions 2., 3. do not hold. Indeed, from Condition 2., $M_{x}<0$, $M_{y}\geq 0$, implies that $\rho_{1}<1$, $\rho_{2}\geq 1$. Then, $M_{y}M_{x}^{v}-M_{x}M_{y}^{v}=M_{y}(M_{x}^{v}-M_{x})\geq 0$, since $M_{x}<0$, $M_{x}^{v}>0$, $M_{y}\geq 0$. Similarly, we can show that Condition 3. does not hold. Thus, $Q$ is ergodic if and only if $\rho_{1}<1$, $\rho_{2}<1$.
\end{proof}\vspace{2mm}\\
From here on consider the following assumption.\\
\noindent
\textbf{Assumption:} $\rho_{1}<1$, $\rho_{2}<1$.\vspace{2mm}\\
$\boldsymbol{\pi}$ is the unique normalized solution of the following equilibrium equations: 
\begin{equation}
\begin{array}{rl}
\pi_{m,n}(1-q_{0,0})=&\pi_{m+1,n-1}q_{-1,1}+\pi_{m,n-1}q_{0,1}+\pi_{m-1,n}q_{1,0}+\pi_{m-1,n+1}q_{1,-1}\vspace{2mm}\\
&+\pi_{m,n+1}q_{0,-1}+\pi_{m+1,n}q_{-1,0},\,m>1,n>1,
\end{array}\label{int}
\end{equation}
\begin{equation}
\begin{array}{rl}
\pi_{1,n}(1-q_{0,0})=&\pi_{2,n-1}q_{-1,1}+\pi_{1,n-1}q_{0,1}+\pi_{0,n}q_{1,0}^{(v)}+\pi_{0,n+1}q_{1,-1}^{(v)}\vspace{2mm}\\
&+\pi_{1,n+1}q_{0,-1}+\pi_{2,n}q_{-1,0}+\pi_{0,n-1}q_{1,1}^{(v)},\,n>1,
\end{array}\label{v1}
\end{equation}
\begin{equation}
\begin{array}{rl}
\pi_{0,n}(1-q_{0,0}^{(v)})=&\pi_{1,n-1}q_{-1,1}+\pi_{0,n-1}q_{0,1}^{(v)}+\pi_{0,n+1}q_{0,-1}^{(v)}+\pi_{1,n}q_{-1,0},\,n>1,
\end{array}\label{v2}
\end{equation}
\begin{equation}
\begin{array}{rl}
\pi_{m,1}(1-q_{0,0})=&\pi_{m+1,0}q_{-1,1}^{(h)}+\pi_{m,0}q_{0,1}^{(h)}+\pi_{m-1,1}q_{1,0}+\pi_{m-1,2}q_{1,-1}\vspace{2mm}\\
&+\pi_{m,2}q_{0,-1}+\pi_{m+1,1}q_{-1,0}+\pi_{m-1,0}q_{1,1}^{(h)},\,m>1,
\end{array}\label{h1}
\end{equation}
\begin{equation}
\begin{array}{rl}
\pi_{m,0}(1-q_{0,0}^{(h)})=&\pi_{m+1,0}q_{-1,0}^{(h)}+\pi_{m,1}q_{0,-1}+\pi_{m-1,1}q_{1,-1}+\pi_{m-1,0}q_{1,0}^{(h)},\,m>1.
\end{array}\label{h2}
\end{equation}
\begin{equation}
\pi_{0,0}(1-q_{0,0}^{(0)})=\pi_{1,0}q_{-1,0}^{(h)}+\pi_{0,1}q_{0,-1}^{(v)},
\label{00}
\end{equation}
\begin{equation}
\pi_{0,1}(1-q_{0,0}^{(v)})=\pi_{0,0}q_{0,1}^{(0)}+\pi_{1,0}q_{-1,1}^{(h)}+\pi_{1,1}q_{-1,0}+\pi_{0,2}q_{0,-1}^{(v)},\label{01}
\end{equation}
\begin{equation}
\pi_{1,0}(1-q_{0,0}^{(h)})=\pi_{0,0}q_{1,0}^{(0)}+\pi_{0,1}q_{1,-1}^{(v)}+\pi_{1,1}q_{0,-1}+\pi_{2,0}q_{-1,0}^{(h)},\label{10}
\end{equation}
\begin{equation}
\pi_{1,1}(1-q_{0,0})=\pi_{0,0}q_{1,1}^{(0)}+\pi_{0,1}q_{1,0}^{(v)}+\pi_{1,2}q_{0,-1}+\pi_{2,0}q_{-1,1}^{(h)}+\pi_{0,2}q_{1,-1}^{(v)}+\pi_{2,1}q_{-1,0}+\pi_{1,0}q_{0,1}^{(h)},\label{11}
\end{equation}
\subsection{The \textit{finite} compensation procedure}\label{compe}
Our aim is to establish conditions under which the compensation approach can be used to obtain the invariant measure of two-dimensional random walks with a transition diagram as given in Figure \ref{p1}. In general, the compensation approach yields an explicit expression by directly exploiting the equilibrium equations, without using any transforms, when the conditions mentioned in Section \ref{intro} are satisfied \cite{ivothesis,adanaplprob}.

The compensation approach aims to solve the equilibrium equations by a linear combination of product-form terms. After introducing an initial product-form term that satisfies the equilibrium equations at the interior state space, additional product-form terms are added so as to alternately compensate for the errors that occur on the horizontal and the vertical boundary equilibrium equations. As a first step, we have to characterize a sufficiently rich basis of product-form solutions satisfying the equilibrium equations in the interior of the state space. This rich basis serves as a pool from which the compensation procedure chooses the appropriate product-form terms so that the boundary equations are also satisfied.

In this work, we establish a finite compensation procedure for a class of two-dimensional random walks defined after violating a fundamental condition of applying the compensation method \cite{adanaplprob}, i.e., we now allow from an interior point, transitions to the East and North (see also the discussion in Section \ref{intro}), and show that their invariant measure, $\boldsymbol{\pi}$, can be written as a mixture of exactly three geometric terms. Our approach is summarized in the following steps.
\begin{enumerate}
\item Compensation starts with an initial term $\gamma^{m}\delta^{n}$ that satisfies the interior and one of the boundary conditions (if it satisfies both boundary conditions, then we are done immediately as in Section \ref{special}); see Prop. \ref{prop1} when inner and horizontal boundary condition are satisfied (a symmetric one for the vertical boundary can be obtained thanks to Conditions A, B). Assume that Conditions A, B are satisfied. Insert the geometric term $\gamma^{m}\delta^{n}$ in the interior equilibrium equations to determine the set of basic solutions that satisfy the interior equations; see Lemma \ref{lemma1}.

 The form of the marginal distributions (thanks to conditions A, B) allows to choose an initial geometric term for the invariant measure of $Q$. In particular, starting with a solution that satisfies the inner and the horizontal boundary condition, the initial factor $\gamma$ of the initial geometric term is known explicitly due to the elegant form of the marginals (symmetrically for the initial factor $\delta$ when we start with a solution that satisfies the vertical boundary); see also Lemma \ref{lem1}.
\item Next, we start compensating for the error on the vertical boundary; see Lemma \ref{lemma} (symmetrically on the horizontal when we start with a solution that satisfies the vertical boundary; see Lemma \ref{hcs}); here we do not yet to require that the $\delta$'s and $\gamma$'s are less than 1 in absolute value. The updated solution does not satisfy the horizontal boundary, and then we have to add a new term; see Lemma \ref{hcs} (symmetrically on the vertical when we start with a solution that satisfies the horizontal boundary; see Lemma \ref{lemma}). So up to now, a linear combination of three geometric terms (chosen from the set of basic solutions that satisfy the interior equilibrium equations) is needed in order to satisfy the inner equations. In any compensation step we added an additional geometric term multiplied by a coefficient, and we solve the boundary equilibrium equations with this coefficient as unknown parameter.
\item The updated solution does not satisfy the vertical boundary (symmetrically on the horizontal when we start with a solution that satisfies the vertical boundary), and we apply again Lemma \ref{lemma} (symmetrically Lemma \ref{hcs}). However, the coefficient of the additional product-form term is found equal to zero, which means that no compensation is needed anymore so that the obtained solution to satisfy the inner equilibrium equations. The new updated solution satisfies the inner, the horizontal and the vertical boundary equations. A specific condition, named Condition C (discussed in the following) is responsible for terminating the compensation procedure. The derived formal solution in given in \eqref{solgf}; see also \eqref{solgf1}. At that point we have obtained a solution to the equilibrium equations \eqref{int}-\eqref{h2}, up to a multiplicative constant.
\item To show that the formal solution satisfying \eqref{int}-\eqref{h2} satisfies also the equilibrium equations \eqref{00}-\eqref{11}, we have to introduce an additional condition, named Condition D. This condition relates the transition probabilities $q_{0,1}^{(h)}$ and $q_{1,0}^{(v)}$ with those in the interior. Hence, we have a solution of interior and boundaries, with all $\delta$'s and $\gamma$'s of the compensating terms to be less than 1 in absolute value (so that we can normalize the solution). The normalization equation is used to obtain the unique solution to \eqref{int}-\eqref{11}.
\end{enumerate}

To conclude, we show in the following that under specific conditions, the solution of the equilibrium equations \eqref{int}-\eqref{11} can be written as a finite linear combination of products of the form $\gamma^{m}\delta^{n}$, by using the compensation approach. 
It consists of an initial term that satisfies the equilibrium equations \eqref{int} in the interior of the state space and at one of the two boundaries, and a finite number of compensation
terms. Each compensation term corrects the error made by the previous term at one of the two boundaries. 

The following lemma characterizes a continuum of product-forms satisfying the inner equations, i.e., a set of basic solutions satisfying the interior equilibrium equations.
\begin{lemma}\label{lemma1}
The product $\gamma^{m}\delta^{n}$ is a solution of \eqref{int} if and only if $\gamma$ and $\delta$ satisfy
\begin{equation}
\gamma\delta(1-q_{0,0})=q_{-1,1}\gamma^{2}+q_{0,1}\gamma+q_{1,0}\delta+q_{1,-1}\delta^{2}+q_{0,-1}\gamma\delta^{2}+q_{-1,0}\gamma^{2}\delta.\label{ker}
\end{equation}
\end{lemma}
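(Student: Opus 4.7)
The plan is a direct substitution argument. I would insert $\pi_{m,n} = \gamma^{m}\delta^{n}$ into the interior equilibrium equation \eqref{int}, which yields
\begin{equation*}
\gamma^{m}\delta^{n}(1-q_{0,0}) = q_{-1,1}\gamma^{m+1}\delta^{n-1} + q_{0,1}\gamma^{m}\delta^{n-1} + q_{1,0}\gamma^{m-1}\delta^{n} + q_{1,-1}\gamma^{m-1}\delta^{n+1} + q_{0,-1}\gamma^{m}\delta^{n+1} + q_{-1,0}\gamma^{m+1}\delta^{n}.
\end{equation*}
Every monomial on both sides contains the common factor $\gamma^{m-1}\delta^{n-1}$, so dividing through yields precisely the kernel equation \eqref{ker}. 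This takes care of both directions: if $(\gamma,\delta)$ satisfies \eqref{ker}, then multiplying back by $\gamma^{m-1}\delta^{n-1}$ recovers \eqref{int} for every $m,n>1$; conversely, if $\gamma^{m}\delta^{n}$ solves \eqref{int} for all such $m,n$, the shared factor can be cancelled (since we may take, for instance, $m=n=2$) and we are left with \eqref{ker}.

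The only point requiring mild care is the degenerate case $\gamma=0$ or $\delta=0$. There the product $\gamma^{m}\delta^{n}$ vanishes identically on the interior indices $m,n\geq 1$, so \eqref{int} is satisfied trivially; inspection shows that \eqref{ker} also holds, since every monomial on its right-hand side contains either $\gamma$ or $\delta$ to a positive power. Thus the equivalence is unconditional, although the interesting case for the subsequent compensation procedure is $\gamma\delta\neq 0$, which parameterizes the nontrivial algebraic curve of admissible product-form solutions.

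I do not expect any genuine obstacle here: the lemma is essentially algebraic bookkeeping that identifies the kernel variety from which the compensation procedure will later draw its basic terms. The substantive difficulty of the paper is not in this lemma but in the subsequent selection of the correct triple of pairs $(\gamma,\delta)$ on this variety so that a finite linear combination of the associated product-forms can additionally absorb the horizontal and vertical boundary equations and close up after three steps under Conditions A--D.
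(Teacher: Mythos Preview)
Your proof is correct and follows exactly the paper's approach: the paper's own proof simply states that the result is straightforward by substituting $\gamma^{m}\delta^{n}$ into \eqref{int}. Your write-up actually gives more detail than the paper (including the harmless degenerate case $\gamma\delta=0$), but there is no difference in method.
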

\begin{proof}
The proof is straightforward by substituting the product $\gamma^{m}\delta^{n}$ in \eqref{int}.
\end{proof}

Let,
\begin{displaymath}
K(\gamma,\delta)=\gamma\delta(1-q_{0,0})-q_{-1,1}\gamma^{2}-q_{0,1}\gamma-q_{1,0}\delta-q_{1,-1}\delta^{2}-q_{0,-1}\gamma\delta^{2}-q_{-1,0}\gamma^{2}\delta.
\end{displaymath}
Moreover, substitute the product $\gamma^{m}\delta^{n}$ in \eqref{v1}-\eqref{h2}. Then, we will have the following equations:
\begin{equation}
\begin{array}{rl}
V_{1}(\gamma,\delta)=&\gamma\delta(1-q_{0,0})-q_{-1,1}\gamma^{2}-q_{0,1}\gamma-q_{1,0}^{(v)}\delta-q_{1,-1}^{(v)}\delta^{2}-q_{0,-1}\gamma\delta^{2}-q_{-1,0}\gamma^{2}\delta-q_{1,1}^{(v)}=0,\vspace{2mm}\\
V_{0}(\gamma,\delta)=&\delta(1-q_{0,0}^{(v)})-q_{-1,1}\gamma-q_{0,-1}^{(v)}\delta^{2}-q_{-1,0}\gamma\delta-q_{0,1}^{(v)}=0,
\end{array}\label{vn}
\end{equation}
\begin{equation}
\begin{array}{rl}
H_{1}(\gamma,\delta)=&\gamma\delta(1-q_{0,0})-q_{-1,1}^{(h)}\gamma^{2}-q_{0,1}^{(h)}\gamma-q_{1,0}\delta-q_{1,-1}\delta^{2}-q_{0,-1}\gamma\delta^{2}-q_{-1,0}\gamma^{2}\delta-q_{1,1}^{(h)}=0,\vspace{2mm}\\
H_{0}(\gamma,\delta)=&\gamma(1-q_{0,0}^{(h)})-q_{1,-1}\delta-q_{-1,0}^{(h)}\gamma^{2}-q_{0,-1}\gamma\delta-q_{1,0}^{(h)}=0,
\end{array}\label{hn}
\end{equation}
Set $H(\gamma,\delta):=H_{1}(\gamma,\delta)+H_{0}(\gamma,\delta)$, and $V(\gamma,\delta):=V_{1}(\gamma,\delta)+V_{0}(\gamma,\delta)$. 
%

As shown in the following, for our problem, we can construct the same formal solution (in the form of a linear combination of a finite number of product-form terms), either by starting with an initial term satisfying the inner and the horizontal boundary equations, or with an initial term satisfying the inner and the vertical boundary equations; see Remarks \ref{rem8}, \ref{remark}. The form of the marginal distributions \eqref{mar1}, \eqref{mar2} provides information regarding one of the terms in the initial product. The next lemma provides in closed form the initial terms, and shows that these initial terms are intersection points of $K(\gamma,\delta)=0$, $H(\gamma,\delta)=0$, and  $K(\gamma,\delta)=0$, $V(\gamma,\delta)=0$, respectively (see also Figure \ref{p2}); for more details see Remark \ref{rem9}.  
\begin{lemma}\label{lem1}
\begin{enumerate}
\item There exists one product-form $\gamma^{m}\delta^{n}$, $0<|\gamma|,|\delta|<1$, which satisfies $K(\gamma,\delta)=0$, $H(\gamma,\delta)=0$. The factors of this product form  are equal to:
\begin{eqnarray}
\gamma=\frac{q_{1,0}+q_{1,-1}}{q_{-1,0}+q_{-1,1}}=\rho_{1},\label{gamma}\\
\delta=\gamma\frac{q_{0,1}+q_{-1,1}\gamma}{q_{1,-1}+q_{0,-1}\gamma}:=\gamma f(\gamma).\label{delta}
\end{eqnarray}
\item There exists one product-form $\gamma^{m}\delta^{n}$, $0<|\gamma|,|\delta|<1$, which satisfies $K(\gamma,\delta)=0$, $V(\gamma,\delta)=0$. The factors of this product-form  are equal to:
\begin{eqnarray}
\delta=\frac{q_{0,1}+q_{-1,1}}{q_{0,-1}+q_{1,-1}}=\rho_{2},\label{ddelta}\\
\gamma=\delta\frac{q_{1,0}+q_{1,-1}\delta}{q_{-1,1}+q_{-1,0}\delta}:=\delta \phi(\delta).\label{ggamma}
\end{eqnarray}
\end{enumerate}
\end{lemma}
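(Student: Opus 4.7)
The plan is to prove part~1 by direct substitution of the claimed values $\gamma=\rho_1$, $\delta=\gamma f(\gamma)$ into both $K$ and $H$, and then to obtain part~2 by the symmetry between the horizontal and vertical boundaries, which swaps $\gamma\leftrightarrow\delta$, $\rho_1\leftrightarrow\rho_2$, and the horizontal parameters $q_{k,l}^{(h)}$ with their vertical counterparts $q_{l,k}^{(v)}$.

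For $K(\gamma,\delta)=0$, Condition~A yields the useful expansion $1-q_{0,0}=q_{-1,0}+q_{-1,1}+q_{0,-1}+q_{0,1}+q_{1,-1}+q_{1,0}$, while the defining relation $\delta(q_{1,-1}+q_{0,-1}\gamma)=\gamma(q_{0,1}+q_{-1,1}\gamma)$ from \eqref{delta} lets me replace $q_{0,1}\gamma+q_{-1,1}\gamma^{2}$ by $q_{1,-1}\delta+q_{0,-1}\gamma\delta$ inside $K$. After regrouping, $K$ collapses to $\delta(1-\gamma)\bigl[\gamma(q_{-1,0}+q_{-1,1})-(q_{1,0}+q_{1,-1})\bigr]$, which vanishes by $\gamma=\rho_1$. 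For $H(\gamma,\delta)=H_1+H_0=0$, the Condition~B.1 identities $q_{-1,1}^{(h)}=q_{-1,1}$, $q_{1,1}^{(h)}=q_{1,0}$, $q_{-1,0}^{(h)}=q_{-1,0}$, $q_{1,0}^{(h)}=q_{1,-1}$ give $H_1=K+(q_{0,1}-q_{0,1}^{(h)})\gamma-q_{1,0}$ together with a simplified form of $H_0$. Imposing $K=0$, expanding $1-q_{0,0}^{(h)}=q_{-1,0}+q_{1,-1}+q_{0,1}^{(h)}+q_{-1,1}+q_{1,0}$ so that the $q_{0,1}^{(h)}$ pieces cancel, and once more invoking \eqref{delta}, I find $H=(1-\gamma)\bigl[\gamma(q_{-1,0}+q_{-1,1})-(q_{1,0}+q_{1,-1})\bigr]$, again zero by $\gamma=\rho_1$.

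For uniqueness, I would redo the combination of $H=0$ and $K=0$ without assuming \eqref{delta}: it is linear in $\delta$ and determines $\delta$ as a rational function of $\gamma$, and substituting back into $K=0$ produces a polynomial in $\gamma$ that has $(\gamma-\rho_1)$ as a factor with its other roots outside the open unit disk by positivity of the $q_{i,j}$'s. To confirm $\rho_1 f(\rho_1)<1$, I view $K(\rho_1,\delta)=0$ as a quadratic in $\delta$ with positive leading coefficient $q_{1,-1}+q_{0,-1}\rho_1$; a direct substitution shows $\delta=1$ is one root, so by Vieta's formulas the companion root is exactly $\rho_1 f(\rho_1)$, which is positive by positivity of the $q_{i,j}$'s and strictly less than $1$ in the non-degenerate case. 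The main obstacle will be the bookkeeping of cancellations in the $H=0$ verification, where the boundary parameter $q_{0,1}^{(h)}$, not directly constrained by B.1, must drop out through the expansion of $1-q_{0,0}^{(h)}$; the bound on $\delta$ is also delicate, since the stability hypothesis $\rho_1<1$ yields $\rho_1 f(\rho_1)<1$ only via the Vieta observation rather than by a direct algebraic inequality.
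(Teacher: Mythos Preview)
Your argument is correct, and the verification-by-substitution route you take for existence is genuinely different from the paper's. The paper works in the opposite direction: it subtracts $H(\gamma,\delta)=0$ from $K(\gamma,\delta)=0$ (using Condition~B.1) to obtain a relation that is linear in $\delta$, namely
\[
\delta(q_{1,-1}+q_{0,-1}\gamma)=\gamma(q_{1,-1}+q_{1,0}+q_{-1,1}+q_{-1,0}+q_{0,1})-\gamma^{2}q_{-1,0}-(q_{1,-1}+q_{1,0}),
\]
then substitutes this into the kernel equation to arrive at the quadratic $\gamma^{2}-\gamma(1+\rho_{1})+\rho_{1}=0$, whose roots are $\gamma=1$ (rejected) and $\gamma=\rho_{1}$. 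Thus the paper derives both existence and uniqueness in one pass. Your approach instead checks directly that the claimed $(\gamma,\delta)$ kills $K$ and $H$, exhibiting the clean factorizations $K=\delta(1-\gamma)[\gamma(q_{-1,0}+q_{-1,1})-(q_{1,0}+q_{1,-1})]$ and $H=(1-\gamma)[\gamma(q_{-1,0}+q_{-1,1})-(q_{1,0}+q_{1,-1})]$ along the curve $\delta=\gamma f(\gamma)$; this makes the role of $\gamma=\rho_{1}$ very transparent. The trade-off is that your uniqueness sketch (``linear in $\delta$, substitute back, factor out $(\gamma-\rho_{1})$'') is precisely the paper's elimination argument, so in the end you do both computations rather than one. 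Regarding the bound $0<\delta<1$: your Vieta observation that $\delta=1$ is the companion root of $K(\rho_{1},\cdot)=0$ is correct and identifies the second root as $\rho_{1}f(\rho_{1})$, but this alone does not force $\rho_{1}f(\rho_{1})<1$; the paper is equally silent here and only pins this down later once Condition~C gives $\delta=\rho_{1}\rho_{2}$.
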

\begin{proof}
We only prove the first part. Part 2. can be proved similarly. Let $\gamma^{m}\delta^{n}$, $0<|\gamma|<1$, $0<|\delta|<1$ be a solution of $K(\gamma,\delta)=0$, $H(\gamma,\delta)=0$. Subtracting these equations, and having in mind condition B.1, we obtain,
\begin{equation}
\delta(q_{1,-1}+q_{0,-1}\gamma)=\gamma(q_{1,-1}+q_{1,0}+q_{-1,1}+q_{-1,0}+q_{0,1})-\gamma^{2}q_{-1,0}-(q_{1,-1}+q_{1,0}).\label{ded}
\end{equation}
Now, rearrange the terms in \eqref{ker} to obtain
\begin{equation}
\begin{array}{c}
\delta^{2}(q_{1,-1}+q_{0,-1}\gamma)+\delta(\gamma^{2}q_{-1,0}+q_{1,0}-\gamma(1-q_{0,0}))+\gamma(q_{0,1}+q_{-1,1}\gamma)=0.
\end{array}\label{vz}
\end{equation}
Divide \eqref{vz} with $\delta$, and substitute \eqref{ded} in the resulting equation to obtain after simple calculations \eqref{delta}. To find $\gamma$, use \eqref{delta} in \eqref{ded} to obtain after some algebra
\begin{equation}
\gamma^{2}-\gamma(1+\frac{q_{1,-1}+q_{1,0}}{q_{-1,1}+q_{-1,0}})+\frac{q_{1,-1}+q_{1,0}}{q_{-1,1}+q_{-1,0}}=0.
\label{cc1}
\end{equation}
Note that \eqref{cc1} has two roots, $\gamma=1$, which is rejected, and $\gamma=\frac{q_{1,-1}+q_{1,0}}{q_{-1,1}+q_{-1,0}}=\rho_{1}$, as given in \eqref{gamma}.
\end{proof}
\begin{figure}[ht!]
\centering
\includegraphics[scale=0.5]{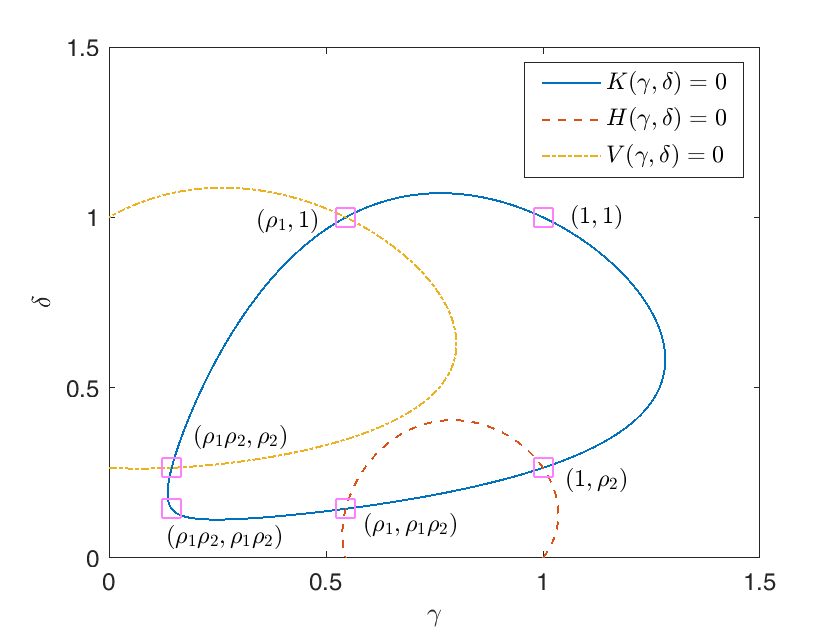}
\caption{The curves $K(\gamma,\delta)=0$, $H(\gamma,\delta)=0$, $V(\gamma,\delta)=0$ for $q_{0,1}=0.0405$, $q_{1,0}=0.027$, $q_{-1,1}=0.0495$, $q_{1,-1}=0.153$, $q_{-1,0}=0.2805$, $q_{0,-1}=0.187$, satisfying Conditions A, B, C.}\label{p2}
\end{figure}

Note that in the proof of Lemma \ref{lem1}, we make use only of condition B.1. Moreover, Lemma \ref{lemma1} characterizes a rich basis of product-form terms satisfying the equilibrium equations in the interior of the state space. This basis is used to construct a linear combination that also satisfies the horizontal and the vertical boundary equations. 

In general, this basis contains uncountably many terms, and the compensation procedure \cite{adanaplprob} chooses appropriately countably many terms so as to alternately compensate for the error on one of the two boundaries. In our work, we focus on conditions that ensure that we need finitely many product-form terms in order to construct a linear combination that also satisfies the horizontal and the vertical boundary equations. Thus, we need a stopping criterion, so that starting from an initial term, we will stop compensating after a finite number of compensation steps. 

The next condition allows to stop compensating after introducing exactly three product-form terms, and refers to a condition that is satisfied by the transition probabilities in the interior state space. \\

\noindent
\textbf{Condition C:} 
    $
    q_{1,0}q_{-1,0}=q_{0,1}q_{0,-1}=q_{-1,1}q_{1,-1}.
    $\\

When we additionally use Condition C, then, simple computations shows that for $\gamma=\rho_{1}$,
\begin{displaymath}
\frac{q_{0,1}+q_{-1,1}\gamma}{q_{1,-1}+q_{0,-1}\gamma}=\frac{q_{0,1}+q_{-1,1}}{q_{1,-1}+q_{0,-1}}=\rho_{2},
\end{displaymath}
so that 
\begin{equation}
\delta=\frac{q_{1,0}+q_{1,-1}}{q_{-1,0}+q_{-1,1}}\times \frac{q_{0,1}+q_{-1,1}}{q_{1,-1}+q_{0,-1}}=\rho_{1}\rho_{2}.\label{delta0}
\end{equation}

In particular, by assuming Condition C, we introduce a stopping criterion (see Lemma \ref{hcs} below), under which after derivation of the third product-form term (i.e., after the second compensation step), no new product-form term is found, i.e., the procedure selects either the former product-form term, or a term that produces an un-normalized solution. Equivalently, by applying another vertical compensation step, the coefficient of the additional term for $m,n>0$ equals zero. Such a situation terminates the compensation procedure and the derived solution satisfies the inner, the horizontal, and the vertical boundary equilibrium equations.

Denote by $\gamma_{0}:=\rho_{1}$ as given in \eqref{gamma}, and $\delta_{0}:=\rho_{1}\rho_{2}$, as given in \eqref{delta}. From here on we assume that Conditions A, B and C are satisfied. 
\begin{proposition} (Initial solution) \label{prop1}
For $\gamma_{0}$, $\delta_{0}$ in \eqref{gamma}, \eqref{delta0}, respectively, the solution 
\begin{equation}
x(m,n)=\left\{\begin{array}{ll}
c_{0}\gamma_{0}^{m}\delta_{0}^{n},&m,n>0,\\
e_{0}\gamma_{0}^{m},&m>0,n=0.
\end{array}\right.
\label{inig}
\end{equation}
satisfies the balance equations \eqref{int}, \eqref{h1}, \eqref{h2}, where 
\begin{equation}
e_{0}=c_{0}\frac{Q_{N}(\gamma_{0})}{H_{N}(\gamma_{0})},
  \label{e00}  
\end{equation}
with
\begin{equation}
\begin{array}{rl}
Q_{N}(\gamma)=&\gamma(q_{0,1}+q_{-1,1}\gamma),\\
H_{N}(\gamma)=&q_{0,1}^{(h)}\gamma+\gamma^{2}q_{-1,1}^{(h)}+q_{1,1}^{(h)}.
\end{array}\label{e0}
\end{equation}
\end{proposition}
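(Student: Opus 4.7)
The plan is to verify the three required equilibrium equations directly by substituting the ansatz \eqref{inig} and reducing each to an identity guaranteed by Lemma~\ref{lem1}. For the interior equation \eqref{int} there is essentially nothing to do: since $m,n>1$ implies that every neighbouring state appearing in \eqref{int} lies in the interior, the equation reduces to $K(\gamma_0,\delta_0)=0$, which is precisely what Lemma~\ref{lem1} (part~1) and Condition~C secure. So the only real work is on the two horizontal boundary equations.

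For \eqref{h1}, I would substitute $\pi_{m,n}=c_0\gamma_0^m\delta_0^n$ ($n\geq 1$) and $\pi_{m,0}=e_0\gamma_0^m$, factor out $\gamma_0^{m-1}$, and group terms according to the coefficient $c_0$ versus $e_0$. The $c_0$-bracket becomes $\gamma_0\delta_0(1-q_{0,0})-\delta_0q_{1,0}-\delta_0^2 q_{1,-1}-\gamma_0\delta_0^2 q_{0,-1}-\gamma_0^2\delta_0 q_{-1,0}$, which by $K(\gamma_0,\delta_0)=0$ collapses exactly to $\gamma_0(q_{0,1}+q_{-1,1}\gamma_0)=Q_N(\gamma_0)$. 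The $e_0$-bracket is already $H_N(\gamma_0)$ by definition. This produces the relation $c_0Q_N(\gamma_0)=e_0H_N(\gamma_0)$, which is precisely \eqref{e00}.

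Next, for \eqref{h2}, the same substitution and division by $\gamma_0^{m-1}$ gives
\begin{displaymath}
e_0\bigl[\gamma_0(1-q_{0,0}^{(h)})-\gamma_0^2 q_{-1,0}^{(h)}-q_{1,0}^{(h)}\bigr]=c_0\delta_0(q_{1,-1}+q_{0,-1}\gamma_0).
\end{displaymath}
Here I use the explicit form \eqref{delta} of $\delta_0$ from Lemma~\ref{lem1}, which gives $\delta_0(q_{1,-1}+q_{0,-1}\gamma_0)=\gamma_0(q_{0,1}+q_{-1,1}\gamma_0)=Q_N(\gamma_0)$, turning the right-hand side into $c_0 Q_N(\gamma_0)$. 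Thus \eqref{h2} also yields $e_0/c_0=Q_N(\gamma_0)/\bigl[\gamma_0(1-q_{0,0}^{(h)})-\gamma_0^2 q_{-1,0}^{(h)}-q_{1,0}^{(h)}\bigr]$.

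The main (but non-trivial) obstacle is the consistency check: the two expressions for $e_0/c_0$ coming from \eqref{h1} and \eqref{h2} must agree, i.e.\ one needs $\gamma_0(1-q_{0,0}^{(h)})-\gamma_0^2 q_{-1,0}^{(h)}-q_{1,0}^{(h)}=H_N(\gamma_0)$. Using Condition~B.1 (so that $q_{-1,0}^{(h)}=q_{-1,0}$, $q_{-1,1}^{(h)}=q_{-1,1}$, and $q_{1,0}^{(h)}+q_{1,1}^{(h)}=q_{1,0}+q_{1,-1}$) together with the fact that the row of transition probabilities on the horizontal boundary sums to one, this identity simplifies precisely to the quadratic $\gamma^2-(1+\rho_1)\gamma+\rho_1=0$, which is \eqref{cc1}. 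Since $\gamma_0=\rho_1$ is by definition its non-trivial root, the consistency holds. Equivalently, this step is nothing but the identity $H(\gamma_0,\delta_0)=0$ from Lemma~\ref{lem1}, applied to the specific root $\gamma_0=\rho_1$. This completes the verification and establishes that \eqref{inig} with $e_0$ as in \eqref{e00} satisfies \eqref{int}, \eqref{h1}, \eqref{h2}.
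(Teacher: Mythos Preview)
Your proof is correct and follows essentially the same route as the paper's: both reduce \eqref{h1} and \eqref{h2} to two expressions for $e_0/c_0$ and then verify their consistency by showing that $\gamma_0(1-q_{0,0}^{(h)})-\gamma_0^2 q_{-1,0}^{(h)}-q_{1,0}^{(h)}=H_N(\gamma_0)$, which under Condition~B.1 is precisely the quadratic \eqref{cc1} satisfied by $\gamma_0=\rho_1$. The paper packages the two boundary equations via auxiliary functions $w_1,w_2$ (see \eqref{mko}) and phrases the consistency as ``the denominators of $w_1(\gamma_0)$ and $w_2(\gamma_0)$ coincide,'' but this is the same identity you establish; your use of $K(\gamma_0,\delta_0)=0$ to collapse the $c_0$-bracket in \eqref{h1} directly to $Q_N(\gamma_0)$ is a slight streamlining of the same computation.
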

\begin{proof}
Substituting \eqref{inig} in \eqref{h1}, \eqref{h2}, yields two equations that must be satisfied by $e_{0}$. In particular, substituting in \eqref{h1}, \eqref{h2} yields respectively,
\begin{displaymath}
\begin{array}{rl}
e_{0}=&c_{0}\delta_{0} w_{1}(\gamma_{0}),\\
e_{0}=&c_{0}\gamma_{0} w_{2}(\gamma_{0}),
\end{array}
\end{displaymath}
where 
\begin{equation}
\begin{array}{rl}
      w_{1}(\gamma)=&\frac{q_{1,-1}+q_{0,-1}\gamma}{\gamma(1-q_{0,0}^{(h)})-q_{-1,0}^{(h)}\gamma^{2}-q_{1,0}^{(h)}},  \\
    w_{2}(\gamma)=& \frac{q_{0,1}+q_{-1,1}\gamma}{\gamma(q_{0,1}^{(h)}+q_{-1,1}^{(h)}\gamma)+q_{1,1}^{(h)}}.
\end{array}\label{mko}
\end{equation} 
It is seen that for $\gamma_{0}$, $\delta_{0}$ given in \eqref{gamma}, \eqref{delta0}, respectively,
\begin{displaymath}
\begin{array}{rl}
\frac{\delta_{0}}{\gamma_{0}}=&f(\gamma_{0})=\frac{w_{2}(\gamma_{0})}{w_{1}(\gamma_{0})},
\end{array}
\end{displaymath}
since the denominators of $w_{1}(\gamma_{0})$, $w_{2}(\gamma_{0})$ coincide. Indeed,
\begin{displaymath}
\begin{array}{rl}
\gamma(1-q_{0,0}^{(h)})-q_{-1,0}^{(h)}\gamma^{2}-q_{1,0}^{(h)}=&\gamma(q_{0,1}^{(h)}+q_{-1,1}^{(h)}\gamma)+q_{1,1}^{(h)}\stackrel{Cond. B}{\Longleftrightarrow}\vspace{2mm}\\
\gamma(q_{0,1}^{(h)}+q_{1,0}+q_{1,-1}+q_{-1,1}+q_{-1,0})-q_{-1,0}\gamma^{2}-q_{1,-1}=&\gamma q_{0,1}^{(h)}+q_{-1,1}\gamma^{2}+q_{1,0}\stackrel{Cond. B}{\Longleftrightarrow}\vspace{2mm}\\
\gamma^{2}-\gamma(1+\frac{q_{1,0}+q_{1,-1}}{q_{-1,0}+q_{-1,1}})+\frac{q_{1,0}+q_{1,-1}}{q_{-1,0}+q_{-1,1}}=0,
\end{array}
\end{displaymath}
which is satisfied by $\gamma=\gamma_{0}=\rho_{1}$; see \eqref{cc1}. Thus, $\delta_{0} w_{1}(\gamma_{0})=\gamma_{0} w_{2}(\gamma_{0})$. After simple calculations, we realize that $\gamma_{0} w_{2}(\gamma_{0})=\frac{Q_{N}(\gamma_{0})}{H_{N}(\gamma_{0})}$, where $Q_{N}(\gamma)$, $H_{N}(\gamma)$ as given in \eqref{e0}. As a consequence, we have a unique coefficient $e_{0}$ as given in \eqref{e00}. 
\end{proof}

It is readily seen that \eqref{inig} does not satisfy the vertical boundary equations \eqref{v1}, \eqref{v2}. The idea behind the compensation approach is to add a new term $c_{1}\tilde{\gamma}^{m}\tilde{\delta}^{n}$, such that $c_{0}\gamma_{0}^{m}\delta_{0}^{n}+c_{1}\tilde{\gamma}^{m}\tilde{\delta}^{n}$ satisfies \eqref{int}-\eqref{v2}. Since it should hold for all $n\geq 2$, we must have $\tilde{\delta}=\delta_{0}$, and since we also want to satisfy \eqref{int}, we must have $\tilde{\gamma}$, to be the zero of $K(\tilde{\gamma},\delta)=0$, such that $\tilde{\gamma}\neq \gamma_{0}$ (note that we cannot choose $\gamma_{0}$, since we will arrive at the previous product-form). It is readily seen that this choice does not provide enough freedom to appropriately choose $c_{1}$, since it needs to fulfil two requirements, i.e., \eqref{v1}, \eqref{v2}.
So we need to update carefully the solution as given in the following lemma.
\begin{lemma}\label{lemma} (Vertical compensation step)
For $\delta=\delta_{0}=\rho_{1}\rho_{2}\in(0,1)$, let $\gamma_{0}$, $\gamma_{1}$ be the roots of \eqref{ker} with $\gamma_{1}=\frac{\delta_{0}}{\gamma_{0}}\phi(\delta_{0})=\delta_{0}<\gamma_{0}$ (thanks to Condition C). Then there exists coefficients $c_{1}$ and $z_{1}$ such that
\begin{equation}
  x(m,n)=\left\{\begin{array}{ll}
c_{0}\gamma_{0}^{m}\delta_{0}^{n}+c_{1}\gamma_{1}^{m}\delta_{0}^{n},&m>0,n>0,\\
z_{1}\delta_{0}^{n},&m=0,n>0.
\end{array}\right. \label{g1} 
\end{equation}
satisfies \eqref{int}, \eqref{v1}, \eqref{v2}. These coefficients are:
\begin{eqnarray}
c_{1}=-c_{0}\frac{\frac{V_{E}(\delta_{0})}{\gamma_{1}}+L(\delta_{0})}{\frac{V_{E}(\delta_{0})}{\gamma_{0}}+L(\delta_{0})},\label{coeg1}\\
z_{1}=-c_{0}\frac{Q_{E}(\delta_{0})\left(\frac{1}{\gamma_{1}}-\frac{1}{\gamma_{0}}\right)}{\frac{V_{E}(\delta_{0})}{\gamma_{1}}+L(\delta_{0})},\label{coeg2}
\end{eqnarray}
where
\begin{equation}
\begin{array}{rl}
Q_{E}(\delta_{0})=&q_{1,-1}\delta_{0}^{2}+q_{1,0}\delta_{0},\\
V_{E}(\delta_{0})=&q_{1,-1}^{(v)}\delta_{0}^{2}+q_{1,0}^{(v)}\delta_{0}+q_{1,1}^{(v)},\\
L(\delta_{0})=&q_{0,-1}^{(v)}\delta_{0}^{2}-(1-q_{0,0}^{(v)})\delta_{0}+q_{0,1}^{(v)}.
\end{array}\label{prege}
\end{equation}
\end{lemma}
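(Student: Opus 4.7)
My plan is to show the existence of $c_1, z_1$ in three stages: first verify that $\gamma_1 = \delta_0$ as a consequence of Condition C; second observe that the interior equation \eqref{int} is automatically satisfied by the ansatz \eqref{g1}; third reduce \eqref{v1}--\eqref{v2} to a $2\times 2$ linear system in $c_1,z_1$ and solve.

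For the first stage, I view $K(\gamma,\delta_0)=0$ as a quadratic in $\gamma$ with leading coefficient $-(q_{-1,1}+q_{-1,0}\delta_0)$ and constant term $-(q_{1,0}\delta_0+q_{1,-1}\delta_0^2)$. Vieta's product formula then gives
\[
\gamma_0\gamma_1 \;=\; \frac{q_{1,0}\delta_0+q_{1,-1}\delta_0^2}{q_{-1,1}+q_{-1,0}\delta_0} \;=\; \delta_0\,\phi(\delta_0),
\]
with $\phi$ as in \eqref{ggamma}. A short cross-multiplication invoking Condition C in the form $q_{1,0}q_{-1,0}=q_{-1,1}q_{1,-1}$, together with $\gamma_0=\rho_1$ and $\delta_0=\rho_1\rho_2$, simplifies $\phi(\delta_0)$ to $\rho_1=\gamma_0$; hence $\gamma_1=\delta_0$, and the strict inequality $\gamma_1<\gamma_0$ then follows from $\rho_2<1$. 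For the second stage, each of the two summands $c_0\gamma_0^m\delta_0^n$ and $c_1\gamma_1^m\delta_0^n$ satisfies \eqref{int} by Lemma \ref{lemma1}, because $K(\gamma_i,\delta_0)=0$; since every neighbour occurring in \eqref{int} for $m>1,n>1$ has first coordinate $\geq 1$ and is thus governed by the top branch of \eqref{g1}, linearity delivers \eqref{int} at once.

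It remains to choose $c_1,z_1$ so that \eqref{v1}--\eqref{v2} hold. Substituting \eqref{g1} into \eqref{v1}, dividing by $\delta_0^{n-1}$, and using the kernel identity $K(\gamma_i,\delta_0)=0$ to cancel the ``interior-like'' part of the $c_i$-terms, the surviving $c_i$-contribution collapses to $(c_0+c_1)Q_E(\delta_0)$ and the $z_1$-contribution to $z_1V_E(\delta_0)$, yielding the first relation
\[
(c_0+c_1)\,Q_E(\delta_0)\;=\;z_1\,V_E(\delta_0).
\]
Substituting into \eqref{v2} and absorbing the common factor $c_0\gamma_0+c_1\gamma_1$ via the Vieta product identity $q_{-1,1}+q_{-1,0}\delta_0=Q_E(\delta_0)/(\gamma_0\gamma_1)$ produces the second relation
\[
-z_1\,L(\delta_0)\;=\;Q_E(\delta_0)\!\left(\frac{c_0}{\gamma_1}+\frac{c_1}{\gamma_0}\right).
\]
Solving this $2\times 2$ linear system for $(c_1,z_1)$ in terms of $c_0$ (e.g.\ by Cramer's rule) then yields the claimed expressions \eqref{coeg1}--\eqref{coeg2}.

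The hard part is the first stage: the collapse $\gamma_1=\delta_0$ is exactly what terminates the compensation procedure and depends crucially on Condition C --- without it, the second kernel root would be a generically new exponent, forcing further compensation. Once $\gamma_1=\delta_0$ is in hand, the remaining work is algebraic bookkeeping, with $Q_E$, $V_E$, and $L$ arising as the natural residues left behind when the kernel identity eliminates the interior-like part of the two vertical-boundary equations.
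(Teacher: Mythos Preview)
Your proposal is correct and follows essentially the same approach as the paper: both arguments identify $\gamma_1$ via the Vieta product for $K(\gamma,\delta_0)=0$ together with Condition~C, then substitute the ansatz into \eqref{v1}--\eqref{v2} to obtain a $2\times 2$ linear system in $(c_1,z_1)$ and solve. The only cosmetic difference is that the paper packages the two equations using auxiliary ratios $z_1(\delta),z_2(\delta)$ (see \eqref{mkop}) before simplifying with Condition~C, whereas you work directly with $Q_E,V_E,L$ and invoke Vieta to rewrite $q_{-1,1}+q_{-1,0}\delta_0$; the resulting systems are the same.
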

\begin{proof}
Thanks to Condition C, we know that $\delta_{0}$ is given in \eqref{delta0}. Then, $K(\gamma,\delta_{0})=0$ results in a quadratic polynomial with two roots, say $\tilde{\gamma}_{0}$, $\tilde{\gamma}_{1}$, such that
\begin{displaymath}
\tilde{\gamma}_{0}\tilde{\gamma}_{1}=\delta_{0}\phi(\delta_{0}).
\end{displaymath}
It is readily seen that $\tilde{\gamma}_{0}=\gamma_{0}$, so that $\tilde{\gamma}_{1}:=\gamma_{1}=\frac{\delta_{0}}{\gamma_{0}}\phi(\delta_{0})$. Condition C ensures that $\gamma_{0}=\phi(\delta_{0})$, so that $\gamma_{1}=\delta_{0}=\rho_{1}\rho_{2}$.

Thus, insertion of the solution to \eqref{v1}, \eqref{v2} results after simple calculations to the following system of equations for the coefficients $c_{1}$, $z_{1}$:
\begin{equation}
   \begin{array}{rl}
        z_{1}-c_{1}\gamma_{1}z_{2}(\delta_{0})=&c_{0}\gamma_{0}z_{2}(\delta_{0}),  \\
      z_{1}-c_{1}\delta_{0}z_{1}(\delta_{0})=&c_{0}\delta_{0}z_{1}(\delta_{0}),
    \end{array}\label{h11}
\end{equation}
where
\begin{equation}
\begin{array}{rl}
z_{1}(\delta)=&\frac{q_{1,0}+q_{1,-1}\delta}{\delta(q_{1,0}^{(v)}+q_{1,-1}^{(v)}\delta)+q_{1,1}^{(v)}}=\delta\frac{Q_{E}(\delta)}{V_{E}(\delta)},\vspace{2mm}\\
z_{2}(\delta)=&\frac{q_{-1,1}+q_{-1,0}\delta}{\delta(1-q_{0,0}^{(v)})-q_{0,-1}^{(v)}\delta^{2}-q_{0,1}^{(v)}}.
\end{array}\label{mkop}
\end{equation}
The solution to \eqref{h11} is 
\begin{displaymath}
\begin{array}{rl}
     z_{1}=&c_{0}\frac{z_{1}(\delta_{0})z_{2}(\delta_{0})(\gamma_{1}-\gamma_{0})}{z_{2}(\delta_{0})-z_{1}(\delta_{0})},  \vspace{2mm}\\
     c_{1}=&c_{0}\frac{\delta_{0}z_{1}(\delta_{0})-\gamma_{0}z_{2}(\delta_{0})}{\gamma_{1}(z_{2}(\delta_{0})-z_{1}(\delta_{0}))}. 
\end{array}
\end{displaymath}
Having in mind Condition C (which in turn implies that $\gamma_{0}=\frac{q_{1,0}+q_{1,-1}\delta_{0}}{q_{-1,1}+q_{-1,0}\delta_{0}}$), the expressions for the coefficients $c_{1}$, $z_{1}$ are written after some algebra in the form given in \eqref{coeg1}, \eqref{coeg2}, respectively. 
\end{proof}

The insertion of the new term (i.e., $c_{1}\gamma_{1}^{m}\delta_{0}^{n}$) will violate the horizontal boundary equations \eqref{h1}, \eqref{h2}. The next lemma summarizes the horizontal compensation step.
\begin{lemma}\label{hcs} (Horizontal compensation step) For $\gamma=\gamma_{1}=\rho_{1}\rho_{2}\in(0,1)$, let $\delta_{0}$, $\delta_{1}$ the roots of \eqref{ker} such that $\delta_{1}=\frac{\gamma_{1}}{\delta_{0}}f(\gamma_{1})=\rho_{2}$. Then there exists coefficients $d_{1}$ and $e_{1}$ such that 
\begin{equation}
    x(m,n)=\left\{\begin{array}{ll}
c_{1}\gamma_{1}^{m}\delta_{0}^{n}+d_{1}\gamma_{1}^{m}\delta_{1}^{n},&m>0,n>0,\\
e_{1}\gamma_{1}^{m},&m>0,n=0,
\end{array}\right.\label{g2}
\end{equation}
satisfies \eqref{int}, \eqref{h1}, \eqref{h2}. The factors $d_{1}$, $e_{1}$ are equal to:
\begin{eqnarray}
d_{1}=-c_{1}\frac{\frac{H_{N}(\gamma_{1})}{\delta_{1}}+W(\gamma_{1})}{\frac{H_{N}(\gamma_{1})}{\delta_{0}}+W(\gamma_{1})},\label{coefg1}\\
e_{1}=-c_{1}\frac{Q_{N}(\gamma_{1})\left(\frac{1}{\delta_{1}}-\frac{1}{\delta_{0}}\right)}{\frac{H_{N}(\gamma_{1})}{\delta_{0}}+W(\gamma_{1})},\label{coefg2}
\end{eqnarray}
where $c_{1}$ as given in Lemma \ref{lemma} and
\begin{equation}
\begin{array}{rl}
Q_{N}(\gamma_{1})=&q_{-1,1}\gamma_{1}^{2}+q_{0,1}\gamma_{1},\\
H_{N}(\gamma_{1})=&q_{-1,1}^{(h)}\gamma_{1}^{2}+q_{0,1}^{(h)}\gamma_{1}+q_{1,1}^{(h)},\\
W(\gamma_{1})=&q_{-1,0}^{(h)}\gamma_{1}^{2}-(1-q_{0,0}^{(h)})\gamma_{1}+q_{1,0}^{(h)}.
\end{array}\label{prege1}
\end{equation}
\end{lemma}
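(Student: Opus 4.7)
The plan is to mirror the proof of Lemma \ref{lemma}, with the roles of the two coordinates interchanged. I would begin by identifying the second $\delta$-root of $K(\gamma_1,\cdot)=0$. Viewing \eqref{ker} as a quadratic in $\delta$ with $\gamma=\gamma_1$ fixed, Vieta's relations give that the product of the two roots equals $\gamma_1 f(\gamma_1)$. A short calculation (analogous to the one following the statement of Condition C) shows that under Condition C one in fact has $f(\gamma)=\rho_2$ for \emph{every} $\gamma$, so $\delta_1=\gamma_1\rho_2/\delta_0=\rho_2$, which lies in $(0,1)$ by the stability assumption. Together with the already available $\delta_0=\rho_1\rho_2$, this gives two distinct product-form solutions $(\gamma_1,\delta_0)$ and $(\gamma_1,\delta_1)$ of $K=0$, so by Lemma \ref{lemma1} the proposed $x(m,n)$ automatically satisfies the interior equation \eqref{int} for $m,n>1$.

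Next I would plug the proposed $x(m,n)$ into the two horizontal boundary equations \eqref{h1} and \eqref{h2}, exploiting $K(\gamma_1,\delta_0)=K(\gamma_1,\delta_1)=0$ to cancel the terms carrying the factor $(1-q_{0,0})$. After dividing through by $\gamma_1^{m-1}$ and collecting, \eqref{h1} collapses to
\[(c_1+d_1)\,Q_N(\gamma_1)=e_1\,H_N(\gamma_1),\]
while \eqref{h2} produces
\[-e_1\,W(\gamma_1)=(c_1\delta_0+d_1\delta_1)\bigl(q_{1,-1}+q_{0,-1}\gamma_1\bigr).\]
This is a $2\times 2$ linear system in the unknowns $d_1,e_1$, with $c_1$ entering as a known parameter inherited from Lemma \ref{lemma}.

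To finish I would eliminate $e_1$ and solve for $d_1$. The decisive simplifications both come from Condition C. First, Condition C implies the identity $q_{0,1}+q_{-1,1}\gamma=\rho_2(q_{1,-1}+q_{0,-1}\gamma)$ for every $\gamma$; at $\gamma=\gamma_1$ and using $\delta_1=\rho_2$ this becomes $q_{1,-1}+q_{0,-1}\gamma_1=Q_N(\gamma_1)/(\gamma_1\delta_1)$, so the common factor $Q_N(\gamma_1)$ cancels from the combined equation. Second, Lemma \ref{lemma} gives (under Condition C) the coincidence $\gamma_1=\delta_0=\rho_1\rho_2$; dividing the numerator and denominator of the resulting quotient by $\delta_0\delta_1$ introduces a factor $\gamma_1/\delta_0=1$ that reduces the expression exactly to \eqref{coefg1}, after which substitution back into the first equation gives \eqref{coefg2}.

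I expect the main obstacle to be purely the algebraic bookkeeping in the last step: the $2\times 2$ system as written looks asymmetric (one equation involves $Q_N,H_N$, the other $W$ together with the expression $q_{1,-1}+q_{0,-1}\gamma_1$), and only the two non-trivial consequences of Condition C — namely $f\equiv\rho_2$ and the coincidence $\gamma_1=\delta_0$ — reshape it into the clean quotient form stated in the lemma. Non-vanishing of the denominator $H_N(\gamma_1)/\delta_0+W(\gamma_1)$, needed for the solution to be well defined, is then handled by the same non-degeneracy argument used in Lemma \ref{lemma}.
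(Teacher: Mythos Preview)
Your proposal is correct and follows essentially the same route as the paper's proof. The paper also identifies $\delta_1$ via the product of roots and Condition C, substitutes into \eqref{h1}--\eqref{h2} to obtain the very same $2\times 2$ linear system (written there in terms of the auxiliary functions $w_1,w_2$ of \eqref{mko}, which are just $-(q_{1,-1}+q_{0,-1}\gamma)/W(\gamma)$ and $Q_N(\gamma)/(\gamma H_N(\gamma))$), and then invokes Condition C --- exactly your identity $\delta_1(q_{1,-1}+q_{0,-1}\gamma_1)=Q_N(\gamma_1)/\gamma_1$ together with $\gamma_1=\delta_0$ --- to recast the solution in the form \eqref{coefg1}--\eqref{coefg2}; if anything, you make the final algebraic step more explicit than the paper's ``after some algebra''.
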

\begin{proof}
It is readily seen that for $\gamma=\gamma_{1}$, $K(\gamma_{1},\delta)=0$, has two roots $\delta_{0}$, $\delta_{1}$, with
\begin{displaymath}
\begin{array}{rl}
\delta_{1}=&\frac{\gamma_{1}}{\delta_{0}}f(\gamma_{1})=\frac{\phi(\delta_{0})}{\gamma_{0}}f(\gamma_{1})\xlongequal{\text{Cond. C}}f(\gamma_{1})\xlongequal{\text{Cond. C}}f(\gamma_{0})=\frac{q_{0,1}+q_{-1,1}}{q_{1,-1}+q_{0,-1}}=\rho_{2}.
\end{array}
\end{displaymath}
Thus, by substituting the solution to \eqref{h1}, \eqref{h2}, results in the following system of equations for the coefficients $d_{1}$, $e_{1}$:
\begin{equation}
\begin{array}{rl}
e_{1}-d_{1}\delta_{1}w_{1}(\gamma_{1})=&c_{1}\delta_{0}w_{1}(\gamma_{1}),\\
-e_{1}+d_{1}\gamma_{1}w_{2}(\gamma_{1})=&-c_{1}\gamma_{1}w_{2}(\gamma_{1}),
\end{array}\label{sys2}
\end{equation}
The solution to \eqref{sys2} is given by
\begin{displaymath}
\begin{array}{rl}
    d_{1}= &c_{1}\frac{\gamma_{1}w_{2}(\gamma_{1})-\delta_{0}w_{1}(\gamma_{1})}{\delta_{1}w_{1}(\gamma_{1})-\gamma_{1}w_{2}(\gamma_{1})}, \vspace{2mm} \\
     e_{1}=&c_{1}\frac{\gamma_{1}w_{2}(\gamma_{1})w_{1}(\gamma_{1})(\delta_{1}-\delta_{0})}{\delta_{1}w_{1}(\gamma_{1})-\gamma_{1}w_{2}(\gamma_{1})}. 
\end{array}
\end{displaymath}
Having in mind Condition C (which in turn implies that $\delta_{1}=\frac{q_{0,1}+q_{-1,1}\gamma_{1}}{q_{1,-1}+q_{0,-1}\gamma_{1}}$), the expressions for the coefficients $d_{1}$, $e_{1}$ are given, after some algebra, by the form given in \eqref{coefg1}, \eqref{coefg2}, respectively.
\end{proof}

Note that after the two compensation steps, the updated solution is 
\begin{equation}
x(m,n)=\left\{\begin{array}{ll}
c_{0}\gamma_{0}^{m}\delta_{0}^{n}+c_{1}\gamma_{1}^{m}\delta_{0}^{n}+d_{1}\gamma_{1}^{m}\delta_{1}^{n},&m>0,n>0,\\
z_{1}\delta_{0}^{n},&m=0,n>0,\\
e_{0}\gamma_{0}^{m}+e_{1}\gamma_{1}^{m},&m>0,n=0,
\end{array}\right.\label{solg}
\end{equation}

Solution \eqref{solg} violates the vertical boundary equations \eqref{v1}, \eqref{v2}, so we need to compensate by adding an additional term. Applying Lemma \ref{lemma} for $\delta=\delta_{1}=\rho_{2}$ (thus, from \eqref{ker} we have $\gamma_{1}=\rho_{1}\rho_{2}$, $\gamma_{2}=1$), i.e., seeking for $d_{2}$, $z_{2}$ so that 
\begin{equation}
x(m,n)=\left\{\begin{array}{ll}
d_{1}\gamma_{1}^{m}\delta_{1}^{n}+d_{2}\gamma_{2}^{m}\delta_{1}^{n},&m>0,n>0,\\
z_{2}\delta_{1}^{n},&m=0,n>0,
\end{array}\right.\label{soltg}
\end{equation}
satisfies \eqref{v1}, \eqref{v2} (and of course \eqref{int}), we come up with two equations that $d_{2}$, $z_{2}$ should satisfy:
In particular,
\begin{equation}
    \begin{array}{rl}
         z_{2}=&(d_{1}\gamma_{1}+d_{2})\frac{q_{-1,1}+q_{-1,0}\delta_{1}}{(1-q_{0,0}^{(v)})\delta_{1}-q_{0,1}^{(v)}-q_{0,-1}^{(v)}\delta_{1}^{2}},\vspace{2mm} \\ z_{2}=&(d_{1}+d_{2})\frac{q_{1,0}\delta_{1}+q_{1,-1}\delta_{1}^{2}}{q_{1,0}^{(v)}\delta_{1}+q_{1,-1}^{(v)}\delta_{1}^{2}+q_{1,1}^{(v)}}=(d_{1}+d_{2})\frac{Q_{E}(\delta_{1})}{V_{E}(\delta_{1})}.
    \end{array}\label{kl}
\end{equation}
Note from Part 2. of Lemma \ref{lem1},  that $\gamma_{1}(q_{-1,1}+q_{-1,0}\delta_{1})=q_{1,0}\delta_{1}+q_{1,-1}\delta_{1}^{2}$. Moreover,
\begin{equation}\begin{array}{rl}
     (1-q_{0,0}^{(v)})\delta-q_{0,1}^{(v)}-q_{0,-1}^{(v)}\delta^{2}=&q_{1,0}^{(v)}\delta+q_{1,-1}^{(v)}\delta^{2}+q_{1,1}^{(v)}\stackrel{\text{Cond. B.1}}{\Longleftrightarrow}\\
     (1-q_{0,0}^{(v)}-q_{1,0}^{(v)})\delta=&(q_{0,-1}+q_{1,-1}^{(v)})\delta^{2}+q_{0,1}+q_{-1,1}\stackrel{\text{Cond. B.1}}{\Longleftrightarrow}\\
     \delta^{2}-(1+\rho_{2})\delta+\rho_{2}=&0
\end{array}\label{hio}
\end{equation}
The last equation in \eqref{hio} has two roots, namely $\delta=\delta_{1}=\rho_{2}$, and $\delta=1$. Thus, 
\begin{displaymath}
    \frac{\gamma_{1}(q_{-1,1}+q_{-1,0}\delta_{1})}{(1-q_{0,0}^{(v)})\delta_{1}-q_{0,1}^{(v)}-q_{0,-1}^{(v)}\delta_{1}^{2}}=\frac{q_{1,0}\delta_{1}+q_{1,-1}\delta_{1}^{2}}{q_{1,0}^{(v)}\delta_{1}+q_{1,-1}^{(v)}\delta_{1}^{2}+q_{1,1}^{(v)}}.
\end{displaymath}
Therefore, it is readily seen from \eqref{kl} that $d_{2}=0$, so no other compensation term is needed (i.e., the coefficient of the additional product-form term vanishes), and $z_{2}=d_{1}\frac{Q_{E}(\delta_{1})}{V_{E}(\delta_{1})}$. 

Thus, the compensation approach is terminated for $m,n>0$. In particular, by using Conditions A, B.1, and C the following expression constitutes a formal solution to equilibrium equations \eqref{int}-\eqref{h2}:
\begin{equation}
x(m,n)=\left\{\begin{array}{ll}
c_{0}\gamma_{0}^{m}\delta_{0}^{n}+c_{1}\gamma_{1}^{m}\delta_{0}^{n}+d_{1}\gamma_{1}^{m}\delta_{1}^{n},&m>0,n>0,\\
z_{1}\delta_{0}^{n}+z_{2}\delta_{1}^{n},&m=0,n>0,\\
e_{0}\gamma_{0}^{m}+e_{1}\gamma_{1}^{m},&m>0,n=0.
\end{array}\right.\label{solgf}
\end{equation}

Define now, 
\begin{displaymath}
x(m,0)=e_{0}\gamma_{0}^{m}+e_{1}\gamma_{1}^{m},\,m\geq0,n=0.
\end{displaymath}
Since $e_{0}+e_{1}=z_{1}+z_{2}$ (i.e., thanks to Condition C), the term
\begin{equation}
x(m,n)=\left\{\begin{array}{ll}
c_{0}\gamma_{0}^{m}\delta_{0}^{n}+c_{1}\gamma_{1}^{m}\delta_{0}^{n}+d_{1}\gamma_{1}^{m}\delta_{1}^{n},&m>0,n>0,\\
z_{1}\delta_{0}^{n}+z_{2}\delta_{1}^{n},&m=0,n \geq 0,\\
e_{0}\gamma_{0}^{m}+e_{1}\gamma_{1}^{m},&m\geq 0,n=0,
\end{array}\right.\label{solgf1}
\end{equation}
satisfies \eqref{int}-\eqref{h2} except \eqref{00}-\eqref{11}.
\begin{remark}\label{rem8}
    Note that the procedure we followed to obtain the formal solution \eqref{solgf1} can also be made in a reverse order. More precisely, \eqref{solgf1} was derived by obtaining first an initial solution satisfying the inner and the horizontal boundary equations (Proposition \ref{prop1}). Then, we perform a vertical (Lemma \ref{lemma}), and a horizontal (Lemma \ref{hcs}) compensation step, and finish with a final vertical compensation step that results in the formal solution \eqref{solgf1}, since the coefficient of the additional product-form term vanishes. The solution \eqref{solgf1} can be also derived by starting from an initial solution satisfying the inner and the vertical boundary equations (a symmetrical version of Proposition \ref{prop1}), then performing a horizontal (Lemma \ref{hcs}) and a vertical (Lemma \ref{lemma}) compensation step, and finish with a final horizontal compensation step, in which the coefficient of the additional product-form (for $m,n>0$) term vanishes; see also Remark \ref{remark}.  
\end{remark}

\begin{remark}\label{rem9}
    Note that the initial solution may come natural if we think as follows: For fixed $\gamma$, we seek for a solution of the form:
    \begin{displaymath}
        x(m,n)=\left\{\begin{array}{ll}
             c_{0}\gamma^{m}\delta_{0}^{n}+c_{1}\gamma^{m}\delta_{1}^{n},&m,n>0,  \\
             e_{0}\gamma^{m},&m>0,n=0, 
        \end{array}\right.
    \end{displaymath}
    satisfying \eqref{int}, \eqref{h1}, \eqref{h2}. Substituting in \eqref{h1}, \eqref{h2} we come up with the following system of equations:
    \begin{equation}
        \begin{array}{rl}
             c_{1}\gamma(q_{0,1}+q_{-1,1}\gamma)-e_{0}H_{N}(\gamma)&=-c_{0}\gamma(q_{0,1}+q_{-1,1}\gamma),  \\
            c_{1}\delta_{1}(q_{1,-1}+q_{0,-1}\gamma)+e_{0}W(\gamma)&=-c_{0}\delta_{0}(q_{1,-1}+q_{0,-1}\gamma). 
        \end{array}\label{test1}
    \end{equation}
    Asking $c_{1}=0$ (since we now seek for a solution, which does not need compensation), \eqref{test1} has a unique solution when
    \begin{displaymath}
        W(\gamma)=-\frac{\delta_{0}(q_{1,-1}+q_{0,-1}\gamma)}{\gamma(q_{0,1}+q_{-1,1}\gamma)}H_{N}(\gamma)=-\frac{H_{N}(\gamma)}{\delta_{1}},
    \end{displaymath}
    by using also $K(\gamma,\delta)=0$. Using that result and by substituting back in \eqref{test1}:
    \begin{equation}
        \begin{array}{rl}
             \frac{e_{0}}{c_{0}}H_{N}(\gamma)=&\gamma(q_{0,1}+q_{-1,1}\gamma), \vspace{2mm} \\
             \frac{e_{0}}{c_{0}}W(\gamma)=&-\delta_{0}(q_{1,-1}+q_{0,-1}\gamma)\Leftrightarrow \frac{e_{0}}{c_{0}}H_{N}(\gamma)=\delta_{0}\delta_{1}(q_{1,-1}+q_{0,-1}\gamma),
        \end{array}\label{test2}
    \end{equation}
    so that
    \begin{eqnarray}
        \delta_{0}\delta_{1}(q_{1,-1}+q_{0,-1}\gamma)=\gamma(q_{0,1}+q_{-1,1}\gamma).\label{tre}
    \end{eqnarray}
    Now $H(\gamma,\delta_{0})=0$ implies
    \begin{displaymath}
    \begin{array}{rl}
      \gamma(q_{0,1}+q_{-1,1}\gamma)- \delta_{0}(q_{1,-1}+q_{0,-1}\gamma)-(H_{N}(\gamma)+W(\gamma))=  &0\Leftrightarrow \vspace{2mm}  \\
     \gamma(q_{0,1}+q_{-1,1}\gamma)- \delta_{0}(q_{1,-1}+q_{0,-1}\gamma)-\frac{c_{0}}{e_{0}}\gamma(q_{0,1}+q_{-1,1}\gamma)+\frac{c_{0}}{e_{0}} \delta_{0}(q_{1,-1}+q_{0,-1}\gamma)=   & 0\Leftrightarrow\vspace{2mm} \\
     (1-\frac{c_{0}}{e_{0}})[\gamma(q_{0,1}+q_{-1,1}\gamma)- \delta_{0}(q_{1,-1}+q_{0,-1}\gamma)]=&0,
    \end{array}
    \end{displaymath}
    which implies (if $e_{0}\neq c_{0}$) that $\delta_{0}=\gamma\frac{q_{0,1}+q_{-1,1}\gamma}{q_{1,-1}+q_{0,-1}\gamma}$, and thus, $\delta_{1}=1$. For $\delta=\delta_{0}=\gamma\frac{q_{0,1}+q_{-1,1}\gamma}{q_{1,-1}+q_{0,-1}\gamma}$, $K(\gamma,\delta_{0})=0$ implies:
    \begin{equation}
\begin{array}{c}
\delta_{0}(q_{1,-1}+q_{0,-1}\gamma)+(\gamma^{2}q_{-1,0}+q_{1,0}-\gamma(1-q_{0,0}))+\frac{\gamma}{\delta_{0}}(q_{0,1}+q_{-1,1}\gamma)=0\Leftrightarrow\vspace{2mm}\\
\gamma(q_{0,1}+q_{-1,1}\gamma)+\gamma^{2}q_{-1,0}+q_{1,0}-\gamma(1-q_{0,0})+\delta_{1}(q_{1,-1}+\gamma q_{-1,1})=0\Leftrightarrow\vspace{2mm}\\
\gamma^{2}-\gamma(1+\frac{q_{1,0}+q_{1,-1}}{q_{-1,0}+q_{-1,1}})+\frac{q_{1,0}+q_{1,-1}}{q_{-1,0}+q_{-1,1}}=0.
\end{array}\label{vzb}
\end{equation}
Thus, $\gamma_{0}=\frac{q_{1,0}+q_{1,-1}}{q_{0,-1}+q_{1,-1}}$, and $\gamma_{1}=1$, so the initial product form contains $\gamma_{0}$, $\delta_{0}$ as given above, and it happens to  satisfy $H(\gamma_{0},\delta_{0})=0$. Note that even if $e_{0}=c_{0}$, \eqref{tre} is still valid and $H(\gamma_{0},\delta_{0})=0$.
         \end{remark}

To constitute a formal solution to all equilibrium equations, we have to show that \eqref{solgf1} satisfies also \eqref{00}-\eqref{11}. Note that in the \textit{standard} compensation approach we do not pay attention to the equilibrium equations \eqref{00}-\eqref{11} (which is possible, because of the second fundamental requirement mentioned in Section \ref{intro}). However, in our case, where we have a finite number of geometric terms, it is essential for \eqref{solgf1} to satisfy \eqref{00}-\eqref{11}. To accomplish this task, we show in Proposition \ref{condd} that along with Condition B.2, we need to introduce an additional condition, named Condition D, that relates the transition probabilities $q_{1,0}^{(v)}$, $q_{0,1}^{(h)}$ with those at the interior:\\
\textbf{Condition D:} The transition probabilities $q_{0,1}^{(h)}$, $q_{1,0}^{(v)}$ satisfy
\begin{equation*}
    \begin{array}{rl}
         q_{1,0}^{(v)}=&q_{1,0}+\frac{q_{1,-1}q_{0,1}}{q_{1,0}},\\
         q_{0,1}^{(h)}=&q_{0,1}+\frac{q_{-1,1}q_{1,0}}{q_{0,1}}.
    \end{array}
\end{equation*}

\begin{proposition}\label{condd}
    Under the Condition D, the solution in \eqref{solgf1} satisfy the equilibrium equations \eqref{00}-\eqref{11}. 
\end{proposition}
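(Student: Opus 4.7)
The plan is to verify by direct substitution that \eqref{solgf1} satisfies each of the four corner equations \eqref{00}, \eqref{01}, \eqref{10}, \eqref{11}. The tools available are (i) the interior kernel identity $K(\gamma_i,\delta_j)=0$ for the three active product-form pairs $(\gamma_0,\delta_0), (\gamma_1,\delta_0), (\gamma_1,\delta_1)$; (ii) the coefficient relations derived in Proposition \ref{prop1}, Lemma \ref{lemma}, Lemma \ref{hcs}, together with the final vertical-step identity $z_2 V_E(\delta_1)=d_1 Q_E(\delta_1)$; (iii) the corner consistency $e_0+e_1=z_1+z_2$ supplied by Condition C; and (iv) Conditions B.1, B.2, C, and D themselves. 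The guiding intuition is that each corner equation differs from a nearby equation---either \eqref{int} taken at the same point or \eqref{v2}/\eqref{h2} extended to the relevant row or column---only in transition probabilities concentrated near the origin, and Condition D is tuned exactly so that these near-origin corrections compensate the mismatch between the actual boundary/origin values of \eqref{solgf1} and the pure interior extrapolation.

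The main work is \eqref{11}. Substituting \eqref{solgf1} into \eqref{int} written formally at $(m,n)=(1,1)$ and using $K(\gamma_i,\delta_j)=0$ does not give a tautology, because \eqref{solgf1} uses boundary/origin formulas at $(2,0),(1,0),(0,2),(0,1),(0,0)$ rather than the interior extrapolation. Expanding and collecting terms yields a residual $\Delta_{\mathrm{int}}$ that is a linear combination of the four coefficient discrepancies $c_0-e_0$, $c_1+d_1-e_1$, $c_0+c_1-z_1$, $d_1-z_2$ weighted by the north/east interior rates $q_{-1,1}, q_{0,1}, q_{1,0}, q_{1,-1}$ and powers of $\gamma_i,\delta_j$. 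On the other hand, the algebraic difference between \eqref{int} at $(1,1)$ and the actual equation \eqref{11}, after Condition B.1 kills the $q_{-1,1}^{(h)}-q_{-1,1}$ and $q_{1,-1}^{(v)}-q_{1,-1}$ contributions, collapses to
\[
\Delta = \pi_{1,0}(q_{0,1}^{(h)}-q_{0,1}) + \pi_{0,1}(q_{1,0}^{(v)}-q_{1,0}) + \pi_{0,0}\,q_{1,1}^{(0)}.
\]
So \eqref{11} is equivalent to the identity $\Delta_{\mathrm{int}}=\Delta$. Condition D supplies the explicit values $q_{0,1}^{(h)}-q_{0,1}=q_{-1,1}q_{1,0}/q_{0,1}$ and $q_{1,0}^{(v)}-q_{1,0}=q_{1,-1}q_{0,1}/q_{1,0}$; Condition B.2 gives $q_{1,1}^{(0)}=q_{1,1}^{(h)}+q_{1,1}^{(v)}$; Condition C provides $q_{1,0}q_{-1,0}=q_{0,1}q_{0,-1}=q_{-1,1}q_{1,-1}$, which is used to rewrite powers of $\gamma_i,\delta_j$. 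Replacing the coefficient discrepancies by their explicit forms---for example $c_0-e_0=c_0[H_N(\gamma_0)-Q_N(\gamma_0)]/H_N(\gamma_0)$ from Proposition \ref{prop1}, and the analogous expressions coming from Lemmas \ref{lemma} and \ref{hcs}---the two sides reduce to the same polynomial in $\gamma_0,\gamma_1,\delta_0,\delta_1$ and the interior rates.

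Equations \eqref{00}, \eqref{01}, \eqref{10} are handled by the same scheme with strictly fewer terms. Equation \eqref{00} reduces, via Conditions B.1 and B.2 alone, to a column/row balance already encoded in the marginals \eqref{mar1}, \eqref{mar2}. Equations \eqref{01} and (symmetrically) \eqref{10} produce the single Condition-D correction $\pi_{0,0}(q_{0,1}^{(v)}-q_{0,1}^{(0)})=-\pi_{0,0}q_{-1,1}q_{1,0}/q_{0,1}$ (using B.2 and D), which balances the coefficient-discrepancy residual inherited from \eqref{v2} written at $n=1$. The main obstacle is the bookkeeping of \eqref{11}, where four coefficient discrepancies interact with three near-origin transition-rate discrepancies; Condition D is the critical ingredient that makes everything cancel, since without it the rates $q_{0,1}^{(h)}$ and $q_{1,0}^{(v)}$ are free parameters and the residual identity $\Delta_{\mathrm{int}}=\Delta$ has no reason to close.
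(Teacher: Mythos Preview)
Your residual-matching framework---comparing each corner equation to the interior or nearby boundary equation and reconciling the discrepancy---is a valid route and would close, but the paper organizes the verification differently and in one place more economically. For \eqref{10} and \eqref{01} the paper proceeds much as you do: substituting \eqref{solgf1} into \eqref{10} and using the defining relations for $e_0,e_1,z_1,z_2$ leaves exactly the requirement $q_{1,0}^{(v)}=q_{1,0}+q_{1,-1}q_{0,1}/q_{1,0}$ (and symmetrically for \eqref{01}), so Condition D is seen to be the \emph{equivalent} condition, not merely a sufficient ingredient. For \eqref{11}, however, the paper does \emph{not} invoke Condition D at all: after using $q_{1,1}^{(0)}=q_{1,1}^{(h)}+q_{1,1}^{(v)}$ (Condition B.2), the kernel relation, and $e_0+e_1=z_1+z_2$ (Condition C), the remaining identity collapses directly via the four coefficient relations $e_0 H_N(\gamma_0)=c_0 Q_N(\gamma_0)$, $e_1 H_N(\gamma_1)=(c_1+d_1)Q_N(\gamma_1)$, $z_1 V_E(\delta_0)=(c_0+c_1)Q_E(\delta_0)$, $z_2 V_E(\delta_1)=d_1 Q_E(\delta_1)$, which are precisely the defining equations of the compensation coefficients. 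Thus your statement that Condition D is ``the critical ingredient'' for \eqref{11} overstates its role: Condition D is genuinely required only for \eqref{10} and \eqref{01}, while \eqref{11} follows from B.2, C, and the compensation construction alone. Finally, the paper dispatches \eqref{00} by the standard dependence-of-balance-equations argument (once all other equations hold, the last is automatic); your appeal to the marginals is less direct and would need care to avoid circularity, since \eqref{mar1}--\eqref{mar2} are themselves consequences of the full set of balance equations.
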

\begin{proof}
We rewrite \eqref{10} (by using also Condition B.1) as
\begin{equation}
\begin{array}{rl}
\pi_{1,0}(1-q_{0,0}^{(h)})-\pi_{2,0}q_{-1,0}^{(h)}=&\pi_{0,0}q_{1,0}^{(0)}+\pi_{1,1}q_{0,-1}+\pi_{0,1}q_{1,-1}.
\end{array}\label{sv1}
\end{equation}
Substituting $\pi_{m,0}=x(m,0)$ in the left hand side of \eqref{sv1} with $x(m,0)$ given in \eqref{solgf1} yields
\begin{equation*}
\begin{array}{rl}
x(1,0)(1-q_{0,0}^{(h)})-x(2,0)q_{-1,0}^{(h)}=&e_{0}[(1-q_{0,0}^{(h)})\gamma_{0}-q_{-1,0}^{(h)}\gamma_{0}^{2}]+e_{1}[(1-q_{0,0}^{(h)})\gamma_{1}-q_{-1,0}^{(h)}\gamma_{1}^{2}].
\end{array}\label{bv1n1}
\end{equation*}
Since $e_{0}$, $e_{1}$ are derived when we substitute \eqref{solgf1} in \eqref{h2}, it is readily seen that
\begin{displaymath}
\begin{array}{l}
e_{0}[(1-q_{0,0}^{(h)})\gamma_{0}-q_{-1,0}^{(h)}\gamma_{0}^{2}]+e_{1}[(1-q_{0,0}^{(h)})\gamma_{1}-q_{-1,0}^{(h)}\gamma_{1}^{2}]\\
=q_{1,0}^{(h)}(e_{0}+e_{1})+c_{0}(q_{1,-1}+q_{0,-1}\gamma_{0})+(c_{1}\delta_{0}+d_{1}\delta_{1})(q_{1,-1}+q_{0,-1}\gamma_{1})
\end{array}
\end{displaymath}
Similarly, substituting $\pi_{m,n}=x(m,n)$ in the right hand side of \eqref{sv1} with $x(m,n)$ as given in \eqref{solgf1} (using also Condition B.1) yields
\begin{displaymath}
\begin{array}{r}
x(0,0)q_{1,0}^{(0)}+x(1,1)q_{0,-1}+x(0,1)q_{1,-1}=q_{1,0}^{(0)}(e_{0}+e_{1})+q_{0,-1}(c_{0}\gamma_{0}\delta_{0}+c_{1}\gamma_{1}\delta_{0}+d_{1}\gamma_{1}\delta_{1})\\+q_{1,-1}(z_{1}\delta_{0}+z_{2}\delta_{1}).
\end{array}
\end{displaymath}
Having in mind that $z_{1}+z_{2}=e_{1}+e_{2}$ (thanks to Condition C), for \eqref{solgf1} to satisfy \eqref{10}, it suffices to show that
\begin{equation}
\begin{array}{r}
q_{1,-1}((c_{0}+c_{1})\delta_{0}+d_{1}\delta_{1})=z_{1}(q_{1,0}^{(0)}-q_{1,0}^{(h)}+q_{1,-1}\delta_{0})+z_{2}(q_{1,0}^{(0)}-q_{1,0}^{(h)}+q_{1,-1}\delta_{1}).
\end{array}\label{uopz}
\end{equation}

Having in mind the way $z_{1}$, $z_{2}$ are obtained by using Lemma \ref{lemma}, we claim that:
\begin{displaymath}
\begin{array}{rl}
    d_{1}q_{1,-1}\delta_{1}=  & z_{2}(q_{1,0}^{(0)}-q_{1,0}^{(h)}+q_{1,-1}\delta_{1}),\\
     q_{1,-1}(c_{0}+c_{1})\delta_{0} = &  z_{1}(q_{1,0}^{(0)}-q_{1,0}^{(h)}+q_{1,-1}\delta_{0}). 
\end{array}
\end{displaymath}
Indeed,
\begin{equation}
\begin{array}{rl}
   d_{1}q_{1,-1}\delta_{1}=  & z_{2}(q_{1,0}^{(0)}-q_{1,0}^{(h)}+q_{1,-1}\delta_{1}) \text{        (due to Lemma \ref{lemma} in the final compensation step)}\\
     =&d_{1}\frac{\delta_{1}q_{1,0}+\delta_{1}^{2}q_{1,-1}}{q_{1,0}^{(v)}\delta_{1}+q_{1,-1}^{(v)}\delta_{1}^{2}+q_{1,1}^{(v)}}(q_{1,0}^{(0)}-q_{1,0}^{(h)}+q_{1,-1}\delta_{1})\stackrel{\text{Cond. B.1}}{\Longleftrightarrow}\\
     q_{1,-1}q_{1,0}^{(v)}\delta_{1}+q_{1,-1}q_{0,1}=&(q_{1,0}+q_{1,-1}\delta_{1})(q_{1,0}^{(0)}-q_{1,0}^{(h)})+q_{1,0}q_{1,-1}\delta_{1}\stackrel{\text{Cond. B.2}}{\Longleftrightarrow}\\
     q_{1,-1}q_{1,0}^{(v)}\delta_{1}+q_{1,-1}q_{0,1}=&(q_{1,0}+q_{1,-1}\delta_{1})(q_{1,0}^{(v)}-q_{1,0})+q_{1,0}q_{1,-1}\delta_{1}\Leftrightarrow\\
     q_{1,0}^{(v)}=&q_{1,0}+\frac{q_{1,-1}q_{0,1}}{q_{1,0}}.
\end{array}\label{u1}
\end{equation}
Similarly,
\begin{equation}
\begin{array}{rl}
   q_{1,-1}(c_{0}+c_{1})\delta_{0} = &  z_{1}(q_{1,0}^{(0)}-q_{1,0}^{(h)}+q_{1,-1}\delta_{0})\text{        (due to Lemma \ref{lemma})}\\
     =&(c_{0}+c_{1})\frac{\delta_{0}q_{1,0}+\delta_{0}^{2}q_{1,-1}}{q_{1,0}^{(v)}\delta_{0}+q_{1,-1}^{(v)}\delta_{0}^{2}+q_{1,1}^{(v)}}(q_{1,0}^{(0)}-q_{1,0}^{(h)}+q_{1,-1}\delta_{0})\stackrel{\text{Cond. B.1}}{\Longleftrightarrow}\\
     q_{1,-1}q_{1,0}^{(v)}\delta_{0}+q_{1,-1}q_{0,1}=&(q_{1,0}+q_{1,-1}\delta_{0})(q_{1,0}^{(0)}-q_{1,0}^{(h)})+q_{1,0}q_{1,-1}\delta_{0}\stackrel{\text{Cond. B.2}}{\Longleftrightarrow}\\
     q_{1,-1}q_{1,0}^{(v)}\delta_{0}+q_{1,-1}q_{0,1}=&(q_{1,0}+q_{1,-1}\delta_{0})(q_{1,0}^{(v)}-q_{1,0})+q_{1,0}q_{1,-1}\delta_{0}\Leftrightarrow\\
     q_{1,0}^{(v)}=&q_{1,0}+\frac{q_{1,-1}q_{0,1}}{q_{1,0}}.
\end{array}\label{u2}
\end{equation}
Therefore, when $q_{1,0}^{(v)}=q_{1,0}+\frac{q_{1,-1}q_{0,1}}{q_{1,0}}$, the solution \eqref{solgf1} satisfies \eqref{10}. By following similar arguments, we can show that \eqref{solgf1} satisfies \eqref{01} when $q_{0,1}^{(h)}=q_{1,0}+\frac{q_{-1,1}q_{1,0}}{q_{0,1}}$, so further details are omitted.

Let us proceed with the balance equation \eqref{11}. Substituting $\pi_{m,n}=x(m,n)$ in \eqref{11}, where $x(m,n)$ as given in \eqref{solgf1}, using Condition B.2 (i.e., $q_{1,1}^{(0)}=q_{1,1}^{(h)}+q_{1,1}^{(v)}$)  and \eqref{ker}, yields after some algebra
\begin{equation}
\begin{array}{l}
c_{0}[\gamma_{0}(q_{0,1}+q_{-1,1}\gamma_{0})+\delta_{0}(q_{1,0}+q_{1,-1}\delta_{0})]+c_{1}[\gamma_{1}(q_{0,1}+q_{-1,1}\gamma_{1})+\delta_{0}(q_{1,0}+q_{1,-1}\delta_{0})]\vspace{2mm}\\
+d_{1}[\gamma_{1}(q_{0,1}+q_{-1,1}\gamma_{1})+\delta_{1}(q_{1,0}+q_{1,-1}\delta_{1})]=z_{1}(q_{1,0}^{(v)}\delta_{0}+q_{1,-1}^{(v)}\delta_{0}^{2})+z_{2}(q_{1,0}^{(v)}\delta_{1}+q_{1,-1}^{(v)}\delta_{1}^{2})\vspace{2mm}\\
+e_{0}(q_{1,1}^{(h)}+q_{1,1}^{(v)}+q_{0,1}^{(h)}\gamma_{0}+q_{-1,1}^{(h)}\gamma_{0}^{2})+e_{1}(q_{1,1}^{(h)}+q_{1,1}^{(v)}+q_{0,1}^{(h)}\gamma_{1}+q_{-1,1}^{(h)}\gamma_{1}^{2}).
\end{array}\label{vbgxen}
\end{equation}
Having in mind that $z_{1}+z_{2}=e_{0}+e_{1}$ (thanks to Condition C), \eqref{vbgxen} is finally rewritten as
\begin{equation}
\begin{array}{l}
c_{0}[\gamma_{0}(q_{0,1}+q_{-1,1}\gamma_{0})+\delta_{0}(q_{1,0}+q_{1,-1}\delta_{0})]+c_{1}[\gamma_{1}(q_{0,1}+q_{-1,1}\gamma_{1})+\delta_{0}(q_{1,0}+q_{1,-1}\delta_{0})]\vspace{2mm}\\
+d_{1}[\gamma_{1}(q_{0,1}+q_{-1,1}\gamma_{1})+\delta_{1}(q_{1,0}+q_{1,-1}\delta_{1})]=z_{1}(q_{1,1}^{(v)}+q_{1,0}^{(v)}\delta_{0}+q_{1,-1}^{(v)}\delta_{0}^{2})\vspace{2mm}\\+z_{2}(q_{1,1}^{(v)}+q_{1,0}^{(v)}\delta_{1}+q_{1,-1}^{(v)}\delta_{1}^{2})
+e_{0}(q_{1,1}^{(h)}+q_{0,1}^{(h)}\gamma_{0}+q_{-1,1}^{(h)}\gamma_{0}^{2})+e_{1}(q_{1,1}^{(h)}+q_{0,1}^{(h)}\gamma_{1}+q_{-1,1}^{(h)}\gamma_{1}^{2}).
\end{array}\label{vbgxen1}
\end{equation}
Now note that from the derivation of $e_{0}$, $e_{1}$, \eqref{h11}, and Lemma \ref{lemma} (in the final compensation step) we have, respectively 
\begin{eqnarray}
e_{0}(q_{1,1}^{(h)}+q_{0,1}^{(h)}\gamma_{0}+q_{-1,1}^{(h)}\gamma_{0}^{2})=c_{0}\gamma_{0}(q_{0,1}+q_{-1,1}\gamma_{0}),\label{a1en}\\
e_{1}(q_{1,1}^{(h)}+q_{0,1}^{(h)}\gamma_{1}+q_{-1,1}^{(h)}\gamma_{1}^{2})=\gamma_{1}(c_{1}+d_{1})(q_{0,1}+q_{-1,1}\gamma_{1}),\label{a2en}\\
z_{1}(q_{1,1}^{(v)}+q_{1,0}^{(v)}\delta_{0}+q_{1,-1}^{(v)}\delta_{0}^{2})=(c_{0}+c_{1})\delta_{0}(q_{1,0}+q_{1,-1}\delta_{0}),\label{a3en}\\
z_{2}(q_{1,1}^{(v)}+q_{1,0}^{(v)}\delta_{1}+q_{1,-1}^{(v)}\delta_{1}^{2})=d_{1}\delta_{1}(q_{1,0}+q_{1,-1}\delta_{1}).
\label{a4en}
\end{eqnarray}
Combining \eqref{a1en}-\eqref{a4en}, we realize that \eqref{vbgxen1} holds, so that \eqref{solgf1} satisfies also \eqref{11}. The final equilibrium equation \eqref{00} is also satisfied due to
the dependence of the equilibrium equations. 
\end{proof}

Hence we can now conclude that using Conditions A, B (i.e., Conditions B.1, B.2), C, D, $\{x(m,n);(m,n)\in S\}$ as given in \eqref{solgf1} is a solution to all equilibrium equations. The next theorem summarizes our main result.
\begin{theorem}\label{mainr} (Main result)
Consider a stable two-dimensional nearest neighbour random walk satisfying Conditions A, B, C, D. Then its invariant measure is given by,
\begin{equation}
x_{m,n}=\left\{\begin{array}{ll}
c_{0}\rho_{1}^{m}(\rho_{1}\rho_{2})^{n}+c_{1}(\rho_{1}\rho_{2})^{m+n}+d_{1}(\rho_{1}\rho_{2})^{m}\rho_{2}^{n},&m>0,n>0,\\
z_{1}(\rho_{1}\rho_{2})^{n}+z_{2}\rho_{2}^{n},&m=0,n \geq 0,\\
e_{0}\rho_{1}^{m}+e_{1}(\rho_{1}\rho_{2})^{m},&m\geq 0,n=0,
\end{array}\right.\label{solfinal}
\end{equation}
where the coefficients $c_{1}$, $d_{1}$, $z_{1}$, $z_{2}$, $e_{0}$, $e_{1}$ are as given in Proposition \ref{prop1}, Lemmas \ref{lemma}, \ref{hcs}, in terms of $c_{0}$, while  $c_{0}$ is obtained by the normalization equation.
\end{theorem}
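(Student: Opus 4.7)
The plan is to assemble the pieces already proved in Proposition \ref{prop1}, Lemmas \ref{lemma}, \ref{hcs}, and Proposition \ref{condd}, and verify that the resulting formal series is the (unique, normalizable) invariant measure. First, I would invoke Lemma \ref{lem1} together with Condition C to record the explicit values of the factors that will appear in the compensation: $\gamma_0=\rho_1$, $\delta_0=\rho_1\rho_2$, $\gamma_1=\rho_1\rho_2$, $\delta_1=\rho_2$, all lying in $(0,1)$ by the standing assumption $\rho_1,\rho_2<1$. These identities, which rely crucially on $q_{1,0}q_{-1,0}=q_{0,1}q_{0,-1}=q_{-1,1}q_{1,-1}$, are what make the pair $(\gamma_0,\delta_0)$ a common intersection of $K=0$ and $H=0$ and make $(\gamma_1,\delta_1)$ a common intersection of $K=0$ and $V=0$.

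Next, I would run the finite compensation procedure exactly as described in Subsection \ref{compe}. Starting from the initial product-form term guaranteed by Proposition \ref{prop1} (interior plus horizontal boundary satisfied), one vertical compensation step via Lemma \ref{lemma} adds the term $c_1\gamma_1^m\delta_0^n$ together with the vertical-axis term $z_1\delta_0^n$ and restores the vertical boundary. A horizontal compensation step via Lemma \ref{hcs} then adds $d_1\gamma_1^m\delta_1^n$ and $e_1\gamma_1^m$ on the horizontal axis, restoring \eqref{h1}--\eqref{h2}. Applying Lemma \ref{lemma} one more time to compensate on the vertical boundary, with $\delta=\delta_1=\rho_2$, the calculation in \eqref{kl}--\eqref{hio} shows that Condition C forces the coefficient of the new product-form term to vanish and the axis contribution to collapse to $z_2=d_1Q_E(\delta_1)/V_E(\delta_1)$. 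Consequently the procedure terminates after exactly three interior terms, yielding the formal solution \eqref{solgf1}. Substituting $\gamma_0,\delta_0,\gamma_1,\delta_1$ by their explicit values produces exactly \eqref{solfinal}, and the coefficients $c_1,d_1,z_1,z_2,e_0,e_1$ are those provided by Proposition \ref{prop1} and Lemmas \ref{lemma}, \ref{hcs}, all expressed in terms of $c_0$.

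At this point \eqref{solfinal} satisfies the interior equation \eqref{int} and the boundary equations \eqref{v1}--\eqref{h2}, but not yet the origin equations \eqref{00}--\eqref{11}. This is where Condition D enters: Proposition \ref{condd} shows that the two identities $q_{1,0}^{(v)}=q_{1,0}+q_{1,-1}q_{0,1}/q_{1,0}$ and $q_{0,1}^{(h)}=q_{0,1}+q_{-1,1}q_{1,0}/q_{0,1}$ are precisely what is needed for \eqref{solfinal} to also satisfy \eqref{01} and \eqref{10}, while \eqref{11} follows from relations \eqref{a1en}--\eqref{a4en} together with $z_1+z_2=e_0+e_1$ (itself a consequence of Condition C), and \eqref{00} is then forced by the dependence of the equilibrium equations. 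Therefore \eqref{solfinal} solves the full system \eqref{int}--\eqref{11}.

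Finally, by Theorem \ref{stabi} the chain $Q$ is positive recurrent, so its invariant measure is unique up to a positive multiplicative constant. Since all factors $\rho_1$, $\rho_1\rho_2$, $\rho_2$ belong to $(0,1)$, the double series $\sum_{m,n\geq 0}x_{m,n}$ is absolutely convergent, so $c_0$ can be chosen to normalize to a probability distribution; this gives the unique $c_0$ and determines all the other coefficients through Proposition \ref{prop1} and Lemmas \ref{lemma}, \ref{hcs}. The main subtlety in this argument is keeping track of how Condition C is used in three separate places (the common factor identification in Lemma \ref{lem1}, the vanishing of the third compensation coefficient via \eqref{hio}, and the axis compatibility $z_1+z_2=e_0+e_1$ needed in Proposition \ref{condd}), but since those facts are already established, the proof reduces to this synthesis.
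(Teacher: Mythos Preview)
Your proposal is correct and follows essentially the same approach as the paper: the paper presents Theorem \ref{mainr} as a summary of the preceding development (Proposition \ref{prop1}, Lemmas \ref{lemma} and \ref{hcs}, the termination argument \eqref{kl}--\eqref{hio}, and Proposition \ref{condd}) rather than giving a separate proof, and your write-up is precisely that synthesis. Your explicit mention of positive recurrence (Theorem \ref{stabi}) to obtain uniqueness and of absolute convergence to justify normalization is a welcome addition that the paper leaves implicit.
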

\begin{remark}\label{r10}
Remind that condition C is crucial in completing the compensation method with exactly three geometric terms. It further provides insights into the properties that the transition probabilities in the interior of the state space should satisfy:
\begin{enumerate}
\item 
\begin{displaymath}
\begin{array}{rl}
     q_{0,1}q_{0,-1}=&q_{-1,1}q_{1,-1} \Leftrightarrow \\
    q_{0,1}q_{0,-1}+ q_{0,1}q_{1,-1}=&q_{0,1}q_{1,-1}+q_{-1,1}q_{1,-1}\Leftrightarrow\\
    q_{0,1}(q_{0,-1}+q_{1,-1})=&q_{1,-1}(q_{0,1}+q_{-1,1})\Leftrightarrow\\
    \frac{q_{0,1}}{q_{1,-1}}=&\frac{q_{0,1}+q_{-1,1}}{q_{0,-1}+q_{1,-1}}=\rho_{2}=\frac{q_{-1,1}}{q_{0,-1}}.
\end{array}
\end{displaymath}
\item 
\begin{displaymath}
\begin{array}{rl}
     q_{1,0}q_{-1,0}=&q_{-1,1}q_{1,-1} \Leftrightarrow \\
    q_{1,0}q_{-1,0}+ q_{1,0}q_{-1,1}=&q_{1,0}q_{-1,1}+q_{-1,1}q_{1,-1}\Leftrightarrow\\
    q_{1,0}(q_{-1,0}+q_{-1,1})=&q_{-1,1}(q_{1,0}+q_{1,-1})\Leftrightarrow\\
    \frac{q_{1,0}}{q_{-1,1}}=&\frac{q_{1,0}+q_{1,-1}}{q_{-1,0}+q_{-1,1}}=\rho_{1}=\frac{q_{1,-1}}{q_{-1,0}}.
\end{array}
\end{displaymath}
\item $q_{1,0}q_{-1,0}=q_{0,1}q_{0,-1}\Leftrightarrow  \frac{q_{0,1}}{q_{-1,0}}=\frac{q_{1,0}}{q_{0,-1}}$.
Note that
\begin{displaymath}
\begin{array}{rl}
     \frac{q_{0,1}}{q_{-1,0}}=&\frac{q_{0,1}}{q_{1,-1}}\frac{q_{1,-1}}{q_{-1,0}}=\rho_{1}\rho_{2}=\frac{q_{1,0}}{q_{0,-1}}=\frac{q_{1,0}}{q_{-1,1}}\frac{q_{-1,1}}{q_{0,-1}}.
\end{array}
\end{displaymath}
Remind that by definition, $\rho_{1}\rho_{2}=\left(\frac{q_{1,0}+q_{1,-1}}{q_{-1,0}+q_{-1,1}}\right)\times\left(\frac{q_{0,1}+q_{-1,1}}{q_{0,-1}+q_{1,-1}}\right)$.
\end{enumerate}
\end{remark}

\begin{remark}\label{remark}
Note that, contrary to the general case \cite{adanaplprob}, Conditions A, B, C, D allow to start the compensation approach with a product-form satisfying either $K(\gamma,\delta)=0$, $H(\gamma,\delta)=0$, or $K(\gamma,\delta)=0$, $V(\gamma,\delta)=0$. Both procedures result in the same formal solution. It is seen that Conditions A, B, C, D preserve symmetry in the course of deriving the basic product-form terms, as shown in Figure \ref{p2}. 
In particular, starting with an initial solution satisfying the interior and the horizontal equilibrium equations, the compensation approach chooses the following terms in the following order: $(\rho_{1},\rho_{1}\rho_{2})\rightarrow(\rho_{1}\rho_{2},\rho_{1}\rho_{2})\rightarrow(\rho_{1}\rho_{2},\rho_{2})$. Starting with an initial solution satisfying the interior and the vertical equilibrium equations, the compensation approach chooses the same terms, but with the opposite direction/order, i.e., $(\rho_{1}\rho_{2},\rho_{2})\rightarrow(\rho_{1}\rho_{2},\rho_{1}\rho_{2})\rightarrow(\rho_{1},\rho_{1}\rho_{2})$. Clearly, the resulting solution is the same, independent of which boundary equations (i.e., either vertical or horizontal) the initial solution should  satisfy. A similar behaviour is also observed in \cite{diek}, where the invariant measure of a reflected Brownian motion with specific constraints on the boundary transition probabilities, is written as a finite sum of exponential terms.
\end{remark}

\subsection{The case where $q_{1,-1}=q_{-1,1}=0$: A single product-form solution}\label{special}
In the following, we consider the random walk discussed above, assuming now $q_{1,-1}=q_{-1,1}=0$. The transition diagram of the resulting random walk is given in Figure \ref{relax1}.
\begin{figure}[ht!]
\centering
\includegraphics[scale=1]{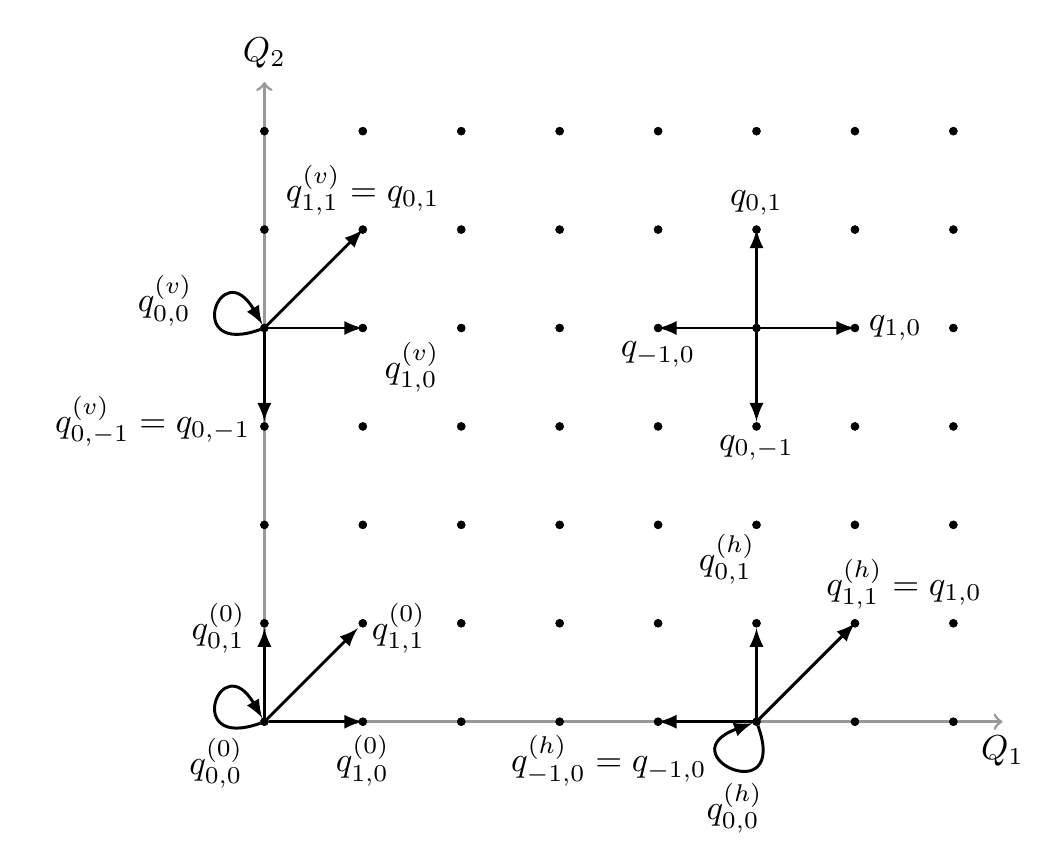}
\caption{The transition diagram.}\label{relax1}
\end{figure}

For such a case, we show that the formal solution (i.e., the solution that satisfies the interior and the boundary equilibrium equations, and obtained through the compensation approach) is of a single product-form term and in order to satisfy the equilibrium equations at points $(0,1)$, $(1,0)$, $(1,1)$, $(0,0)$, we must set $q_{0,1}^{(0)}=q_{1,0}^{(0)}=0$. Thanks to Condition B.2, this result implies that $q_{0,1}^{(h)}=q_{0,1}$ and $q_{1,0}^{(v)}=q_{1,0}$ (i.e., the revised Condition D).

Thus, by assuming, $q_{1,-1}=q_{-1,1}=0$, let Conditions A, B be satisfied. Condition C is partly satisfied in the sense that we have $q_{0,1}q_{0,-1}=q_{1,0}q_{-1,0}$. A balance principle allows to show that the marginal distributions are geometric as in \eqref{mar1}, \eqref{mar2}, where now 
\begin{displaymath}
\rho_{1}=\frac{q_{1,0}}{q_{-1,0}}<1,\text{ and }\rho_{2}=\frac{q_{0,1}}{q_{0,-1}}<1.
\end{displaymath}

We show that, by not allowing the interior transitions to the North-West, and South-East, a single product-form term is sufficient to satisfy the equilibrium equations \eqref{int}-\eqref{h2}. Following the lines of the previous subsection, the initial solution satisfying \eqref{int} and \eqref{h1}, \eqref{h2} is
\begin{equation}
    x(m,n)=\left\{\begin{array}{ll}
     c_{0}\gamma_{0}^{m}\delta_{0}^{n},&m,n\geq 1,  \\
     e_{0}\gamma_{0}^{m},&m\geq 1,n=0, \end{array}\right.\label{o1}
\end{equation}
where now Lemma \ref{lem1} implies $\gamma_{0}:=\rho_{1}$, $\delta_{0}:=\rho_{2}$. Coefficient $e_{0}$ is such that \eqref{h1}, \eqref{h2} are satisfied by $x(m,n)$, i.e., upon substitution in \eqref{h1}, \eqref{h2}, we have:
\begin{displaymath}
\begin{array}{rl}
    e_{0}= &c_{0}\frac{q_{0,-1}\delta_{0}}{1-q_{0,0}^{(h)}-q_{-1,0}^{(h)}\gamma_{0}}\xlongequal{\text{Cond. B.1 \& substituting }\gamma_{0},\ \delta_{0}}c_{0}\frac{q_{0,1}}{q_{-1,0}+q^{(h)}_{0,1}},  \\
     e_{0}=&c_{0}\frac{q_{0,1}\gamma_{0}}{q_{1,1}^{(h)}+q_{0,1}^{(h)}\gamma_{0}}\xlongequal{\text{Cond. B.1 \& substituting }\gamma_{0}} c_{0}\frac{q_{0,1}}{q_{-1,0}+q^{(h)}_{0,1}}.
\end{array}
\end{displaymath}
Following the lines in subsection \ref{compe} one should except that $x(m,n)$ needs compensation so that to satisfy \eqref{v1}, \eqref{v2}. However, by applying Lemma \ref{lemma} (note that for $\delta=\delta_{0}=\rho_{2}$, \eqref{ker} becomes a quadratic polynomial with respect to $\gamma$, having two zeros, namely $\gamma_{0}=\rho_{1}$, and $\gamma_{1}=1$), we seek for $c_{1}$, $z_{1}$ so that 
\begin{equation}
  x(m,n)=\left\{\begin{array}{ll}
c_{0}\gamma_{0}^{m}\delta_{0}^{n}+c_{1}\gamma_{1}^{m}\delta_{0}^{n},&m>0,n>0,\\
z_{1}\delta_{0}^{n},&m=0,n>0,
\end{array}\right. \label{gs1} 
\end{equation}
satisfy \eqref{v1}, \eqref{v2} (and of course \eqref{int}), or equivalently $c_{1}$, $z_{1}$ satisfy:
\begin{displaymath}
\begin{array}{rl}
    z_{1}= &(c_{0}\gamma_{0}+c_{1})\frac{q_{-1,0}}{1-q_{0,0}^{(v)}-q_{0,-1}^{(v)}\delta_{0}}\xlongequal{\text{Cond. B.1 \& substituting }\delta_{0}}(c_{0}\gamma_{0}+c_{1})\frac{q_{-1,0}}{q_{0,-1}+q^{(v)}_{1,0}},  \\
     z_{1}=&(c_{0}+c_{1})\frac{q_{1,0}\delta_{0}}{q_{1,1}^{(v)}+q_{1,0}^{(v)}\delta_{0}}\xlongequal{\text{Cond. B.1 \& substituting }\delta_{0}} (c_{0}+c_{1})\frac{q_{1,0}}{q_{0,-1}+q^{(v)}_{1,0}}.
\end{array}
\end{displaymath}
From the above, and the definition of $\gamma_{0}$, it is readily seen that $c_{1}=0$, so no compensation term is needed (i.e., the coefficient
of the additional product-form term vanishes) and $z_{1}=c_{0}\frac{q_{1,0}}{q_{0,-1}+q^{(v)}_{1,0}}$. Therefore, the formal solution to \eqref{int}-\eqref{h2} is
\begin{equation}
x(m,n)=\left\{\begin{array}{ll}
     c_{0}\gamma_{0}^{m}\delta_{0}^{n},&m,n\geq 1,  \\
     e_{0}\gamma_{0}^{m},&m\geq 1,n=0, \\
     z_{1}\delta_{0}^{n},&m=0,n\geq 1,
\end{array}\right.\label{o2}
\end{equation}
Define $x(m,0)=e_{0}\gamma_{0}^{m}$, $m\geq 0$, $n=0$, so the formal solution becomes
\begin{equation}
x(m,n)=\left\{\begin{array}{ll}
     c_{0}\gamma_{0}^{m}\delta_{0}^{n},&m,n\geq 1,  \\
     e_{0}\gamma_{0}^{m},&m\geq 0,n=0, \\
     z_{1}\delta_{0}^{n},&m=0,n\geq 1,
\end{array}\right.\label{o2m}
\end{equation}
We show that in order \eqref{o2m} to satisfy the remaining equilibrium equations \eqref{00}-\eqref{11}, we must set $q_{1,0}^{(0)}=q_{0,1}^{(0)}=0$, and thanks to Condition B.2, this result yields $q_{0,1}^{(h)}=q_{0,1}$, $q_{1,0}^{(v)}=q_{1,0}$, which further implies that $e_{0}=z_{1}$.

Substituting \eqref{o2m} in \eqref{01} (i.e., setting $\pi_{m,n}=x(m,n)$, where $x(m,n)$ as given in \eqref{o2m}), we have:
\begin{displaymath}
\begin{array}{rl}
    x(0,1)(1-q_{0,0}^{(v)})-x(0,2)q_{0,-1}^{(v)} =&x(0,0)q_{0,1}^{(0)}+x(1,1)q_{-1,0}\Longleftrightarrow  \\
    \delta_{0}[z_{1}(1-q_{0,0}^{(v)}-q_{0,-1}^{(v)}\delta_{0}) -c_{0}\gamma_{0}q_{-1,0}]=&e_{0}q_{0,1}^{(0)}. 
\end{array}
\end{displaymath}
Having in mind how $z_{1}$ is derived, we must set $q_{0,1}^{(0)}=0$, and thus, $q_{0,1}^{(h)}=q_{0,1}$, due to condition B.2.

Similarly, substituting \eqref{o2m} in \eqref{10} we obtain
\begin{displaymath}
\begin{array}{rl}
    x(1,0)(1-q_{0,0}^{(h)})-x(2,0)q_{-1,0}^{(h)} =&x(0,0)q_{1,0}^{(0)}+x(1,1)q_{0,-1}\Longleftrightarrow  \\
    \gamma_{0}[e_{0}(1-q_{0,0}^{(h)}-q_{-1,0}^{(h)}\gamma_{0})-c_{0}\delta_{0}q_{0,-1}] =&e_{0}q_{1,0}^{(0)}. 
\end{array}
\end{displaymath}
Having in mind how $e_{0}$ is derived, in order \eqref{o2m} to satisfy \eqref{10}, we must set $q_{1,0}^{(0)}=0$, and thus, $q_{1,0}^{(v)}=q_{1,0}$, due to condition B.2. It is easily realized that since $q_{1,0}^{(v)}=q_{1,0}$, $q_{0,1}^{(h)}=q_{0,1}$, Condition C also implies that $e_{0}=z_{1}$. Thus, the following expression is a solution to \eqref{int}-\eqref{h2} (and of \eqref{01}, \eqref{10}):
\begin{equation}
    x(m,n)=\left\{\begin{array}{ll}
     c_{0}\gamma_{0}^{m}\delta_{0}^{n},&m,n\geq 1,  \\
     e_{0}\gamma_{0}^{m},&m\geq 0,n=0, \\
     z_{1}\delta_{0}^{n},&m=0,n\geq 0.
\end{array}\right.\label{soll}
\end{equation}
The corresponding result is also illustrated in Figure \ref{pon10}.
\begin{figure}[ht!]
\centering
\includegraphics[scale=0.5]{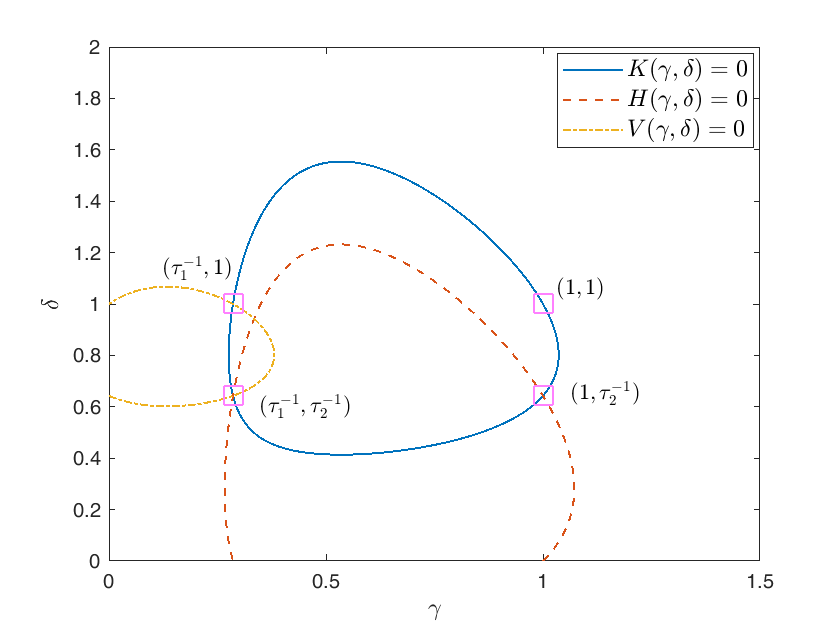}
\caption{The curves $K(\gamma,\delta)=0$, $H(\gamma,\delta)=0$, $V(\gamma,\delta)=0$, where $\tau_{1}^{-1}:=\rho_{1}=6/21$, $\tau_{2}^{-1}:=9/14$.}\label{pon10}
\end{figure}

\begin{remark}
Note that, for \eqref{soll} to satisfy \eqref{10}, \eqref{01}, we have shown that we have to set $q_{0,1}^{(h)}=q_{0,1}$, $q_{1,0}^{(v)}=q_{1,0}$. Clearly, these assumptions are direct consequences of Conditions B, and D, discussed in the general model, when we assume $q_{1,-1}=q_{-1,1}=0$, and $q_{0,1}^{(0)}=q_{1,0}^{(0)}=0$. 
\end{remark}

We now focus on \eqref{11}, having in mind that $q_{0,1}^{(h)}=q_{0,1}$, $q_{1,0}^{(v)}=q_{1,0}$, and $q_{0,1}^{(0)}=0$, $q_{1,0}^{(0)}=0$. Substituting \eqref{soll} in \eqref{11} we obtain
\begin{displaymath}
\begin{array}{rl}
     x(1,1)=&x(0,0)q_{1,1}^{(0)}+x(0,1)q_{1,0}^{(v)}+x(1,2)q_{0,-1}+x(2,1)q_{-1,0}+x(1,0)q_{0,1}^{(h)}\Longleftrightarrow  \\
     c_{0}\gamma_{0}\delta_{0}=&e_{0}q_{1,1}^{(0)}+q_{1,0}^{(v)}z_{1}\delta_{0}+q_{0,1}^{(h)}\gamma_{0}e_{0}+c_{0}(q_{0,-1}\gamma_{0}\delta_{0}^{2}+q_{-1,0}\gamma_{0}^{2}\delta_{0})\stackrel{Cond.\ D}{\Longleftrightarrow}\\
c_{0}[\gamma_{0}\delta_{0}-q_{0,-1}\gamma_{0}\delta_{0}^{2}-q_{-1,0}\gamma_{0}^{2}\delta_{0}]=&e_{0}(q_{1,1}^{(h)}+q_{1,1}^{(v)})+q_{0,1}^{(h)}\gamma_{0}e_{0}+q_{1,0}^{(v)}\delta_{0}z_{1}\stackrel{K(\gamma_{0},\delta_{0})=0}{\Longleftrightarrow}\\
c_{0}(q_{0,1}\gamma_{0}+q_{1,0}\delta_{0})=&e_{0}(q_{1,1}^{(h)}+q_{0,1}^{(h)}\gamma_{0})+z_{1}(q_{1,1}^{(v)}+q_{1,0}^{(v)}\delta_{0}).
\end{array}
\end{displaymath}
The last equation holds, since the derivation of $e_{0}$, $z_{1}$, implies that
\begin{displaymath}
\begin{array}{rl}
     c_{0}q_{0,1}\gamma_{0}=& e_{0}(q_{1,1}^{(h)}+q_{0,1}^{(h)}\gamma_{0}), \\
     c_{0}q_{1,0}\delta_{0}=&z_{1}(q_{1,1}^{(v)}+q_{1,0}^{(v)}\delta_{0}). 
\end{array}
\end{displaymath}
Finally, equation \eqref{00} is also satisfied due to the dependence of the equilibrium equations, and thus we can conclude that \eqref{soll} is a  solution to all equilibrium equations.

Therefore, when we set $q_{1,-1}=q_{-1,1}=0$, considering condition A, and the new versions of conditions B, C and D, to be satisfied, the solution to the interior and the boundary equations, which is derived through the finite compensation procedure (which now requires only a single product-form term), is a solution to all equilibrium equations. The following theorem summarizes our main result in this subsection:
\begin{theorem}\label{mainr1} (Main result)
Consider a stable two-dimensional nearest neighbour random walk satisfying Conditions A, B, C, D, as they are evolved, by assuming $q_{1,-1}=q_{-1,1}=0$. Then, its equilibrium distribution is given by,
\begin{equation}
\pi_{m,n}=c_{0}\times \left\{\begin{array}{ll}
\rho_{1}^{m}\rho_{2}^{n},&m>0,n>0,\\
\frac{q_{0,1}}{q_{-1,0}+q_{0,1}}\rho_{2}^{n},&m=0,n \geq 0,\\
\frac{q_{0,1}}{q_{-1,0}+q_{0,1}}\rho_{1}^{m},&m\geq 0,n=0,
\end{array}\right.\label{solfinalsp}
\end{equation}
where, thanks to the normalization equation, we have
\begin{displaymath}
\begin{array}{rl}
     c_{0}=&\frac{(1-\rho_{1})(1-\rho_{2})(q_{0,1}+q_{-1,0})}{q_{0,1}+q_{-1,0}\rho_{1}\rho_{2}}. 
\end{array}
\end{displaymath}
\end{theorem}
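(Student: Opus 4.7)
The plan is to assemble the solution already constructed piecewise in Subsection \ref{special} and then pin down the multiplicative constant from the normalization condition. First, I would observe that the preceding analysis of that subsection has already verified, under the present hypotheses ($q_{1,-1} = q_{-1,1} = 0$, together with Conditions A, B and the partial form of C, which in turn force $q_{0,1}^{(h)} = q_{0,1}$, $q_{1,0}^{(v)} = q_{1,0}$, and $q_{0,1}^{(0)} = q_{1,0}^{(0)} = 0$ via Condition B.2), that \eqref{soll} with $\gamma_0 = \rho_1 = q_{1,0}/q_{-1,0}$ and $\delta_0 = \rho_2 = q_{0,1}/q_{0,-1}$ solves every equilibrium equation \eqref{int}--\eqref{11}. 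The closed form of the boundary coefficients derived there simplifies, after using Condition C and the identification $q_{0,1}^{(h)} = q_{0,1}$, to
\begin{equation*}
e_0 \;=\; z_1 \;=\; c_0\,\frac{q_{0,1}}{q_{-1,0}+q_{0,1}},
\end{equation*}
so \eqref{soll} rewritten in these explicit quantities is exactly the piecewise expression claimed in \eqref{solfinalsp}, up to the as-yet undetermined constant $c_0$.

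Next, I would determine $c_0$ from $\sum_{(m,n)\in S}\pi_{m,n} = 1$. Setting $A := q_{0,1}/(q_{-1,0}+q_{0,1})$ and splitting the state space into $(0,0)$, the two boundary rays, and the strict interior, geometric series evaluation gives
\begin{equation*}
\frac{1}{c_0} \;=\; A \;+\; A\,\frac{\rho_1}{1-\rho_1} \;+\; A\,\frac{\rho_2}{1-\rho_2} \;+\; \frac{\rho_1\rho_2}{(1-\rho_1)(1-\rho_2)}.
\end{equation*}
Using $\frac{1}{1-\rho_1}+\frac{\rho_2}{1-\rho_2} = \frac{1-\rho_1\rho_2}{(1-\rho_1)(1-\rho_2)}$ collapses the $A$-terms; then $A(1-\rho_1\rho_2)+\rho_1\rho_2 = A + (1-A)\rho_1\rho_2$ and direct substitution of $A$ produces
\begin{equation*}
\frac{1}{c_0} \;=\; \frac{q_{0,1} + q_{-1,0}\,\rho_1\rho_2}{(q_{-1,0}+q_{0,1})(1-\rho_1)(1-\rho_2)},
\end{equation*}
which rearranges to the expression for $c_0$ asserted in the theorem. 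Uniqueness of the normalized stationary distribution is automatic from ergodicity, which is furnished by Theorem \ref{stabi} since $\rho_1,\rho_2<1$.

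There is no real obstacle here: the nontrivial verifications (that \eqref{soll} satisfies \eqref{00}--\eqref{11}, and the reduction of the boundary coefficients to a single value $e_0 = z_1$) were carried out in the construction of \eqref{soll}. What remains is bookkeeping, with the only subtlety being that the contribution of $(0,0)$ must be counted once, not twice, when the two boundary sums are combined. Everything else is geometric summation and algebraic simplification governed by Conditions A, B, C, D in their specialized form.
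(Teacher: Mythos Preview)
Your proposal is correct and follows essentially the same approach as the paper: the verification that \eqref{soll} satisfies all equilibrium equations is done in Subsection~\ref{special}, and the theorem itself merely packages that construction together with the normalization constant. The paper simply asserts the value of $c_0$ ``thanks to the normalization equation,'' whereas you spell out the geometric summation and the simplification $e_0=z_1=c_0\,q_{0,1}/(q_{-1,0}+q_{0,1})$ explicitly; this added detail is correct and uses Condition~C exactly as the paper does when it notes that $e_0=z_1$.
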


\section{Three motivating queueing examples
}\label{motivation}
In this section we present two queueing models that obey the theoretical framework in Section \ref{general}. Both models were analyzed in \cite{devosphd}, \cite{devos}, using the generating function approach and complex analytic arguments; see subsections \ref{prel}, \ref{prod}. In subsection \ref{geoprod}, we cope with another queueing model described by a non-nearest neighbor two dimensional random walk considered in \cite[subsection 2.5]{devosphd}. For this model the author provided a product-form solution by using complex analytic arguments.    
Although the authors provided an elegant mathematical approach, their method does not reveal the qualitative characteristics that are responsible for such an elegant solution. On the contrary, our approach is purely probabilistic and reveals these characteristics, resulting in the characterization formulated in Theorems \ref{mainr}, and \ref{mainr1} in Section \ref{general}. For the model in subsection \ref{geoprod}, although it does not entirely fit on the theoretical framework of Section 2, we provided the product-form solution by solving directly the equilibrium equations in the spirit of compensation approach; see also \cite{viss1}. 

\subsection{A discrete time two-class randomly alternating service model with independent Bernoulli arrivals}\label{prel}
In the following we treat the model analyzed in \cite{devos}. To make this section self-contained, we briefly describe the model in \cite{devos}. Consider a discrete-time single server  queueing model with two infinite capacity queues. The time axis is divided
into fixed-length intervals, referred to as (time) slots. New customers
may enter the system at any given (continuous) point on the time axis, but services are
synchronized to (i.e., can only start and end at) slot boundaries. The service time of a customer is exactly one slot. At the beginning of each time slot, the single server randomly selects either
queue to serve. This selection occurs independently of the system state, i.e., we assume a non-work conserving policy, and the server chooses (without knowing its state) queue I (resp. II) with probability $a$ (resp. $\bar{a}:=1-a$); $0<a<1$. Thus, the server may choose an empty queue, and then, in this slot, no service will be provided. It is further assumed that the allocation of the server to a queue 
is independent from slot to slot. The two input streams of customers into the queueing system are described by means of two independent Bernoulli processes, i.e., the number of class $j$ arrivals (i.e., the arrivals to queue $j$) during the consecutive slots is Bernoulli-distributed with parameter $\lambda_{k}$, $k = 1, 2$. For convenience, let $\bar{\lambda}_{k}=1-\lambda_{k}$, $k=1,2$. Note that the non-work conserving property is responsible for the special transition structure of the corresponding two dimensional random walk, which belongs to the general class described in Section \ref{general}.

Denote by $Q_{k}(n)$, $k=1,2,$ the system contents of class $k$ jobs at the beginning of the $n$th slot. Then, $Q(n)=(Q_{1}(n),Q_{2}(n))$ is a DTMC with state space $S=\{(i,j);i,j\geq0\}$. Denote by $q_{m,n}$ the one-step transition probabilities from state $(i,j)$ to $(i+m,j+n)$, where $(i,j)\in S$, $m,n,=-1,0,1$, where
\begin{enumerate}
\item $m,n>0$,
\begin{displaymath}
q_{0,1}=a\lambda_{1}\lambda_{2},\,q_{-1,1}=a\bar{\lambda}_{1}\lambda_{2},\,q_{0,0}=a\lambda_{1}\bar{\lambda}_{2}+\bar{a}\bar{\lambda}_{1}\lambda_{2},\,q_{1,0}=\bar{a}\lambda_{2}\lambda_{1},\,q_{1,-1}=\bar{a}\lambda_{1}\bar{\lambda}_{2},\,q_{-1,0}=a\bar{\lambda}_{1}\bar{\lambda}_{2},\,\,q_{0,-1}=\bar{a}\bar{\lambda}_{1}\bar{\lambda}_{2}.
\end{displaymath}
\item $m>0,n=0$,
\begin{displaymath}
q_{0,1}^{(h)}=a\lambda_{1}\lambda_{2}+\bar{a}\bar{\lambda}_{1}\lambda_{2},\,q_{-1,1}^{(h)}=a\bar{\lambda}_{1}\lambda_{2},\,q_{0,0}^{(h)}=a\lambda_{1}\bar{\lambda}_{2}+\bar{a}\bar{\lambda}_{1}\bar{\lambda}_{2},\,q_{1,0}^{(h)}=\bar{a}\bar{\lambda}_{2}\lambda_{1},\,q_{1,1}^{(h)}=\bar{a}\lambda_{1}\lambda_{2},\,q_{-1,0}^{(h)}=a\bar{\lambda}_{1}\bar{\lambda}_{2}.
\end{displaymath}
\item $m=0,n>0$,
\begin{displaymath}
q_{0,1}^{(v)}=a\bar{\lambda}_{1}\lambda_{2},\,q_{1,-1}^{(v)}=\bar{a}\lambda_{1}\bar{\lambda}_{2},\,q_{0,0}^{(v)}=a\bar{\lambda}_{1}\bar{\lambda}_{2}+\bar{a}\bar{\lambda}_{1}\lambda_{2},\,q_{1,0}^{(v)}=\bar{a}\lambda_{1}\lambda_{2}+a\lambda_{1}\bar{\lambda}_{2},\,q_{1,1}^{(v)}=a\lambda_{1}\lambda_{2},\,q_{0,-1}^{(v)}=\bar{a}\bar{\lambda}_{1}\bar{\lambda}_{2}.
\end{displaymath}
\item $m=n=0$,
\begin{displaymath}
\begin{array}{c}
q_{0,0}^{(0)}=\bar{a}\bar{\lambda}_{1}\bar{\lambda}_{2}+a\bar{\lambda}_{1}\bar{\lambda}_{2}=\bar{\lambda}_{1}\bar{\lambda}_{2},\,q_{1,0}^{(0)}=\bar{a}\lambda_{1}\bar{\lambda}_{2}+a\lambda_{1}\bar{\lambda}_{2}=\lambda_{1}\bar{\lambda}_{2},\\
q_{0,1}^{(0)}=\bar{a}\lambda_{2}\bar{\lambda}_{1}+a\lambda_{2}\bar{\lambda}_{1}=\lambda_{2}\bar{\lambda}_{1},\,q_{1,1}^{(0)}=\bar{a}\lambda_{2}\lambda_{1}+a\lambda_{2}\lambda_{1}=\lambda_{2}\lambda_{1}.
\end{array}
\end{displaymath}
\end{enumerate}

The transition diagram of the corresponding two-dimensional random walk is  given in Figure \ref{p1}. To be consistent with the notation in \cite{devos}, note that 
\begin{displaymath}
    \tau_{1}^{-1}:=\rho_{1}=\frac{\bar{a}\lambda_{1}}{a\bar{\lambda}_{1}},\,\,\tau_{2}^{-1}:=\rho_{2}=\frac{a\lambda_{2}}{\bar{a}\bar{\lambda}_{2}},\,\,\tau_{T}^{-1}:=\rho_{1}\rho_{2}=\frac{\lambda_{1}\lambda_{2}}{\bar{\lambda}_{1}\bar{\lambda}_{2}}.
\end{displaymath}
Theorem \ref{stabi} is now reduced to the following lemma:
\begin{lemma}
The system is stable if and only if $\lambda_{1}<a$, $\lambda_{2}<\bar{a}$.
\end{lemma}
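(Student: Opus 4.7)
The plan is to reduce the stability statement to Theorem \ref{stabi} by verifying that the transition probabilities of this particular Markov chain satisfy Conditions A and B.1, and then translating the condition $\rho_1 < 1$, $\rho_2 < 1$ into the two inequalities $\lambda_1 < a$ and $\lambda_2 < \bar a$.

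First I would observe that Condition A is immediate: the listed transition probabilities contain no $q_{1,1}$ or $q_{-1,-1}$ terms, so these are $0$ by construction. Condition B.1 is a direct check of eight identities; for instance $q_{1,1}^{(h)}=\bar a\lambda_1\lambda_2=q_{1,0}$ and $q_{1,0}^{(h)}=\bar a\lambda_1\bar\lambda_2=q_{1,-1}$, and the remaining six follow in the same way from the listed expressions. Once these structural conditions are in place, Theorem \ref{stabi} applies and stability is equivalent to $\rho_1<1$ and $\rho_2<1$.

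Next I would compute $\rho_1$ and $\rho_2$ from their definitions, using the explicit transition probabilities:
\begin{equation*}
\rho_1=\frac{q_{1,0}+q_{1,-1}}{q_{-1,0}+q_{-1,1}}=\frac{\bar a\lambda_1\lambda_2+\bar a\lambda_1\bar\lambda_2}{a\bar\lambda_1\bar\lambda_2+a\bar\lambda_1\lambda_2}=\frac{\bar a\lambda_1}{a\bar\lambda_1},
\end{equation*}
and similarly $\rho_2=a\lambda_2/(\bar a\bar\lambda_2)$, where the $\lambda_2+\bar\lambda_2=1$ and $\lambda_1+\bar\lambda_1=1$ collapses occur in the numerator and denominator. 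These are precisely the $\tau_1^{-1},\tau_2^{-1}$ already recorded in the excerpt just before the lemma.

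Finally I would rearrange each inequality. The inequality $\rho_1<1$ reads $\bar a\lambda_1<a\bar\lambda_1$, i.e.\ $(1-a)\lambda_1<a(1-\lambda_1)$, which simplifies to $\lambda_1<a$; symmetrically $\rho_2<1$ becomes $a\lambda_2<(1-a)(1-\lambda_2)$, which simplifies to $\lambda_2<1-a=\bar a$. Combining this with Theorem \ref{stabi} yields the claimed equivalence. There is no real obstacle here; the only thing to be careful about is to note explicitly that the model satisfies the hypotheses A and B.1 of Theorem \ref{stabi}, since otherwise the invocation is not justified.
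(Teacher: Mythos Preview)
Your proposal is correct and follows essentially the same route as the paper: invoke Theorem \ref{stabi} and then rewrite $\rho_1<1$ and $\rho_2<1$ as $\lambda_1<a$ and $\lambda_2<\bar a$. The paper's proof is terser since the verification of Conditions A and B.1 and the computation of $\rho_1,\rho_2$ are handled in the text surrounding the lemma rather than inside the proof itself, but your explicit inclusion of these checks is entirely appropriate.
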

\begin{proof}
Following Theorem \ref{stabi}, $\rho_{1}<1$, $\rho_{2}<1$ are necessary and sufficient conditions for the ergodicity of our model. In particular, $\rho_{1}<1$ (resp. $\rho_{2}<1$) is equivalent to $\bar{a}\lambda_{1}<a\bar{\lambda}_{1}$ (resp. $a\lambda_{2}<\bar{a}\bar{\lambda}_{2}$), i.e., $\lambda_{1}<a$ (resp. $\lambda_{2}<\bar{a}$). 
\end{proof}

%
Note that:
\begin{enumerate}
\item \begin{displaymath}
\begin{array}{rl}
q_{1,0}+q_{1,-1}=q_{1,0}^{(h)}+q_{1,1}^{(h)}=&\bar{a}\lambda_{1},\vspace{2mm}\\
q_{-1,0}+q_{-1,1}=q_{-1,0}^{(h)}+q_{-1,1}^{(h)}=&a\bar{\lambda}_{1},
\end{array}
\end{displaymath}
\item \begin{displaymath}
\begin{array}{rl}
q_{1,1}^{(h)}+q_{0,1}^{(h)}+q_{-1,1}^{(h)}=\lambda_{2}>a\lambda_{2}=q_{0,1}+q_{-1,1}.
\end{array}
\end{displaymath}
\item \begin{displaymath}
\begin{array}{rl}
q_{0,1}+q_{-1,1}=q_{0,1}^{(v)}+q_{1,1}^{(v)}=&a\lambda_{2},\vspace{2mm}\\
q_{0,-1}+q_{1,-1}=q_{0,-1}^{(v)}+q_{1,-1}^{(v)}=&\bar{a}\bar{\lambda}_{2},
\end{array}
\end{displaymath}
\item \begin{displaymath}
\begin{array}{rl}
q_{1,1}^{(v)}+q_{1,0}^{(v)}+q_{1,-1}^{(v)}=\lambda_{1}>\bar{a}\lambda_{1}=q_{1,0}+q_{1,-1}.
\end{array}
\end{displaymath}
\end{enumerate}
It is straightforward to see that the transition probabilities of the model at hand satisfy Conditions A, B, C, D, given in Section \ref{general}. Moreover, condition C implies that:  $q_{1,0}q_{-1,0}=q_{0,1}q_{0,-1}=q_{-1,1}q_{1,-1}=a\bar{a}\lambda_{1}\lambda_{2}\bar{\lambda}_{1}\bar{\lambda}_{2}$.

It is readily seen, by using the balance principle, as in Section \ref{general}, that we can  derive explicit formulae for the marginal distributions. In particular, by substituting the one step transition probabilities for the model at hand in \eqref{mar1}, \eqref{mar2}, we obtain after some algebra:
\begin{displaymath}
\pi_{m}^{(1)}=\left\{\begin{array}{ll}
     \frac{a-\lambda_{1}}{a\bar{a}}(\tau_{1}^{-1})^{m},&m\geq 1,  \\
    1-\frac{\lambda_{1}}{a}, &m=0, 
\end{array}\right.,\,\,\pi_{n}^{(2)}=\left\{\begin{array}{ll}
     \frac{\bar{a}-\lambda_{2}}{a\bar{a}}(\tau_{2}^{-1})^{n},&n\geq 1,  \\
    1-\frac{\lambda_{2}}{\bar{a}}, &n=0. 
\end{array}\right.
\end{displaymath}

The above expressions are exactly the same as those derived in \cite[Section 3]{devos}, where the authors used the generating function approach. In the following, we briefly apply the approach described in Subsection \ref{compe}. In Figure \ref{p2} we illustrate the curves $(\gamma,\delta)$ satisfying $K(\gamma,\delta)=0$, $H(\gamma,\delta)=0$, $V(\gamma,\delta)=0$, for $\lambda_{1}=0.4$, $\lambda_{2}=0.15$, $a=0.6$.
\paragraph{Step 1 (Initial solution):} We proceed with the constructing the solution starting from a solution satisfying the inner and the horizontal boundary equations. Substituting $\gamma_{0}=\tau_{1}^{-1}<1$ in \eqref{ker} yields a quadratic polynomial with respect to $\delta$, which has two roots, namely $\tau_{T}^{-1}$, and $1$. Clearly, the latter one is rejected. Thus, setting $\delta_{0}=\tau_{T}^{-1}$, the term $c_{0}\gamma_{0}^{m}\delta_{0}^{n}$ satisfies the inner equilibrium equations for $m,n\geq 1$. Thus an initial formal solution satisfying \eqref{ker}, \eqref{hn} (or equivalently, \eqref{int}, \eqref{h1}, \eqref{h2}) is given by \eqref{inig} with $e_{0}=c_{0}\lambda_{1}$.
\begin{figure}[ht!]
\centering
\includegraphics[scale=0.5]{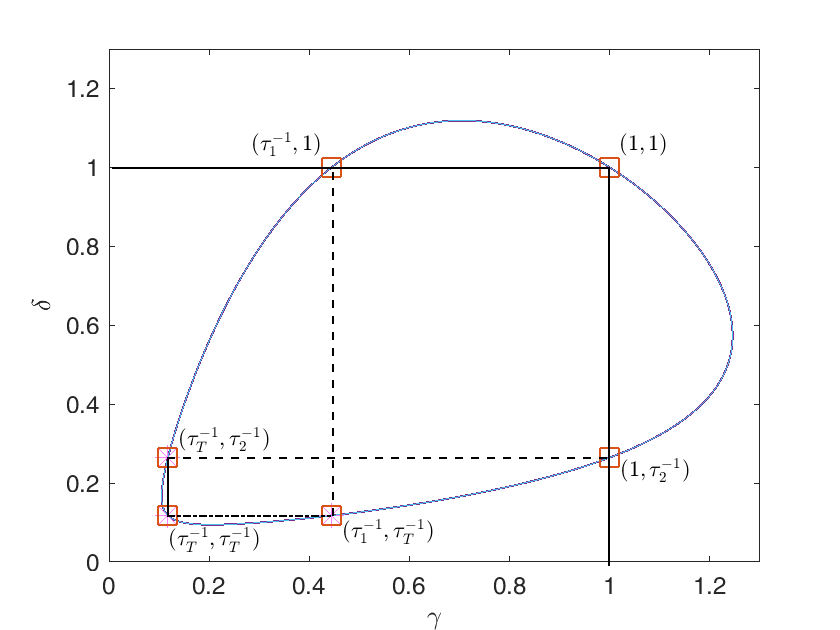}
\caption{The kernel equation \eqref{ker} for $\lambda_{1}=0.4$, $\lambda_{2}=0.15$, $a=0.6$, along with the terms $(\gamma_{0},\delta_{0})=(\tau_{1}^{-1},\tau_{T}^{-1})$, $(\gamma_{1},\delta_{0})=(\tau_{T}^{-1},\tau_{T}^{-1})$, $(\gamma_{1},\delta_{1})=(\tau_{T}^{-1},\tau_{2}^{-1})$.}\label{fw1}
\end{figure}
\paragraph{Step 2 (Vertical compensation step):} The initial solution \eqref{inig} (with $e_{0}=c_{0}\lambda_{1}$) violates \eqref{v1}, \eqref{v2}, 
 and to compensate this error we add a term $c_{1}\gamma\delta_{0}^{n}$. Note that setting $\delta=\delta_{0}=\tau_{T}^{-1}$ in \eqref{ker}, and following the details in Section \ref{general} we have that $\gamma:=\gamma_{1}=\tau_{T}^{-1}$. So, we consider the updated solution \eqref{g1} where the coefficients $c_{1}$, $z_{1}$ are obtained by using Lemma \ref{lemma}, and given by 
\begin{displaymath}
\begin{array}{rl}
c_{1}=&-c_{0}\frac{a\bar{a}(1-\lambda_{1}-\lambda_{2})}{\lambda_{2}\bar{\lambda}_{2}(a-\lambda_{1})},\vspace{2mm}\\
z_{1}=&-c_{0}\frac{\bar{a}\lambda_{1}(\bar{a}-\lambda_{2})}{(a-\lambda_{1})\bar{\lambda}_{2}}.
\end{array}
\end{displaymath}
\paragraph{Step 3 (Horizontal compensation step):} The updated solution \eqref{g1} (with $e_{0}$, $c_{1}$, $z_{1}$ as given above), violates \eqref{h1}, \eqref{h2}. Thus, we need to add another product-form term $d_{1}\gamma_{1}^{m}\delta_{1}^{n}$, where by  following the details in Lemma \ref{hcs}, $(\gamma_{1},\delta_{1})=(\tau_{T}^{-1}, \tau_{2}^{-1})$ and the updated solution after that step is given by \eqref{solg} where the coefficients $d_{1}$, $e_{1}$ satisfy \eqref{h1}, \eqref{h2}, and are given by
\begin{displaymath}
\begin{array}{rl}
e_{1}=&-c_{0}\lambda_{1}\frac{a}{\bar{\lambda}_{2}},\vspace{2mm}\\
d_{1}=&c_{0}\frac{\lambda_{1}\bar{\lambda}_{1}(\bar{a}-\lambda_{2})}{\lambda_{2}\bar{\lambda}_{2}(a-\lambda_{1})}.
\end{array}
\end{displaymath}
\paragraph{Step 4 (Termination of the compensation procedure):}
Again, the updated solution \eqref{solg} (with $e_{0}$, $c_{1}$, $z_{1}$, $e_{1}$, $d_{1}$ as given above) violates \eqref{v1}, \eqref{v2}. By applying Lemma \ref{lemma} for $\delta=\delta_{1}=\tau_{2}^{-1}$ (setting $\delta_{1}=\tau_{2}^{-1}$ in \eqref{ker}, we result in two candidates for $\gamma$, namely $\gamma=\tau_{T}^{-1}$, and $\gamma=1$), and following the lines of the previous section (see \eqref{soltg}-\eqref{solgf}), the compensation approach is terminated since the coefficient of the additional product-form term (for $m>0$, $n\geq 1$) equals zero. For $m=0,n\geq 1$, the new term is $z_{2}\delta_{1}^{n}$, where $z_{2}$ equals:
\begin{equation}
    z_{2}=\lambda_{2}d_{1}=c_{0}\frac{\lambda_{1}\bar{\lambda}_{1}(\bar{a}-\lambda_{2})}{\bar{\lambda}_{2}(a-\lambda_{1})}.\label{cov}
\end{equation}
The final formal solution to \eqref{int}-\eqref{h2} would be
\begin{equation}
x(m,n)=\left\{\begin{array}{ll}
c_{0}\gamma_{0}^{m}\delta_{0}^{n}+c_{1}\gamma_{1}^{m}\delta_{0}^{n}+d_{1}\gamma_{1}^{m}\delta_{1}^{n},&m\geq 1,n\geq 1,\\
z_{1}\delta_{0}^{n}+z_{2}\delta_{1}^{n},&m=0,n\geq 1,\\
e_{0}\gamma_{0}^{m}+e_{1}\gamma_{1}^{m},&m>0,n=0,
\end{array}\right.\label{fofin}
\end{equation}
Define for all $m\geq0$, $n=0$,
\begin{equation}
x(m,0)=e_{0}\gamma_{0}^{m}+e_{1}\gamma_{1}^{m}.\label{001}
\end{equation}
Note that $e_{0}+e_{1}=c_{0}\frac{\lambda_{1}(\bar{a}-\lambda_{2})}{\bar{\lambda}_{2}}=z_{1}+z_{2}$,
thus, we can also set
\begin{equation}
x(0,n)=z_{1}\delta_{0}^{n}+z_{2}\delta_{1}^{n},\,m=0,n\geq 0.\label{002}
\end{equation}
Therefore, $x(0,0)$ can be defined either from \eqref{001} or from \eqref{002}.

Finally, we  proved that
\begin{equation}
x(m,n)=\left\{\begin{array}{ll}
c_{0}\gamma_{0}^{m}\delta_{0}^{n}+c_{11}\gamma_{1}^{m}\delta_{0}^{n}+d_{1}\gamma_{1}^{m}\delta_{1}^{n},&m\geq 1,n\geq 1,\\
z_{1}\delta_{0}^{n}+z_{2}\delta_{1}^{n},&m=0,n\geq 0,\\
e_{0}\gamma_{0}^{m}+e_{1}\gamma_{1}^{m},&m\geq 0,n=0,
\end{array}\right.\label{fofinv}
\end{equation}
satisfies \eqref{int}-\eqref{h2}, except \eqref{00}-\eqref{11}. 
\paragraph{Step 5 (Closing step):} Using the same arguments as given in Subsection \ref{compe} (i.e., the corresponding random walk satisfies Conditions A, B, C, D discussed in Section \ref{general}), we can show that \eqref{fofinv} satisfies also \eqref{00}-\eqref{11}.

The following theorem is our main result and summarizes the analysis in that subsection. It is a direct consequence of Theorem \ref{mainr}, when we substitute the coefficients $e_{0}$, $e_{1}$, $c_{1}$, $d_{1}$, $z_{1}$, $z_{2}$, derived above. It remains to obtain the constant $c_{0}$, by using the normalization equation. Asking, $\sum_{m=0}^{\infty}\sum_{n=0}^{\infty}\pi_{m,n}=1$, we obtain after lengthy but straightforward calculations that
\begin{equation*}
c_{0}=\frac{(a-\lambda_{1})(1-\lambda_{1}-\lambda_{2})}{a\bar{a}\lambda_{1}\bar{\lambda}_{1}}.\label{cons}
\end{equation*} 
\begin{theorem}
Under stability condition, the solution of balance equations \eqref{int}-\eqref{11} is given as follows:
\begin{equation}
\pi_{m,n}=\left\{\begin{array}{ll}
\frac{(a-\lambda_{1})(1-\lambda_{1}-\lambda_{2})}{a\bar{a}\lambda_{1}\bar{\lambda}_{1}}\frac{1}{\tau_{1}^{m}}\frac{1}{\tau_{T}^{n}}+\frac{(\bar{a}-\lambda_{2})(1-\lambda_{1}-\lambda_{2})}{a\bar{a}\lambda_{2}\bar{\lambda}_{2}}\frac{1}{\tau_{T}^{m}}\frac{1}{\tau_{2}^{n}}-\frac{(1-\lambda_{1}-\lambda_{2})^{2}}{\lambda_{1}\bar{\lambda}_{1}\lambda_{2}\bar{\lambda}_{2}}\frac{1}{\tau_{T}^{m+n}},&m,n\geq 1,\vspace{2mm}\\
\frac{(\bar{a}-\lambda_{2})(1-\lambda_{1}-\lambda_{2})}{a\bar{\lambda}_{2}}\left(\frac{1}{\bar{a}}\frac{1}{\tau_{2}^{n}}-\frac{1}{\bar{\lambda}_{1}}\frac{1}{\tau_{T}^{n}}\right),&m=0,n\geq 0,\vspace{2mm}\\
\frac{(a-\lambda_{1})(1-\lambda_{1}-\lambda_{2})}{\bar{a}\bar{\lambda}_{1}}\left(\frac{1}{a}\frac{1}{\tau_{1}^{m}}-\frac{1}{\bar{\lambda}_{2}}\frac{1}{\tau_{T}^{m}}\right),&m\geq 0,n=0,\vspace{2mm}\\
\frac{(a-\lambda_{1})(\bar{a}-\lambda_{2})(1-\lambda_{1}-\lambda_{2})}{a\bar{a}\bar{\lambda}_{1}\bar{\lambda}_{2}},&m=n=0.
\end{array}\right.
\label{mainc}
\end{equation}
\end{theorem}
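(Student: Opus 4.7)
The plan is to derive \eqref{mainc} as an immediate corollary of Theorem~\ref{mainr}, the only genuinely new computation being that of the normalising constant $c_{0}$.

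First, as already verified earlier in Subsection~\ref{prel}, the transition probabilities of the randomly alternating service model satisfy Conditions A, B, C, D. Hence Theorem~\ref{mainr} applies and the invariant measure takes the three-term form \eqref{solfinal} with
\begin{displaymath}
\rho_{1}=\tau_{1}^{-1}=\frac{\bar a\lambda_{1}}{a\bar\lambda_{1}},\qquad \rho_{2}=\tau_{2}^{-1}=\frac{a\lambda_{2}}{\bar a\bar\lambda_{2}},\qquad \rho_{1}\rho_{2}=\tau_{T}^{-1}=\frac{\lambda_{1}\lambda_{2}}{\bar\lambda_{1}\bar\lambda_{2}}.
\end{displaymath}
The coefficients $e_{0},e_{1},c_{1},d_{1},z_{1},z_{2}$ were already obtained in Steps 1--4 of the compensation procedure. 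Substituting these explicit coefficients into \eqref{solfinal} and simplifying by repeated use of $\bar\lambda_{j}=1-\lambda_{j}$ and $\bar a=1-a$, I would match, term by term, the coefficients of $\tau_{1}^{-m}\tau_{T}^{-n}$, $\tau_{T}^{-(m+n)}$, $\tau_{T}^{-m}\tau_{2}^{-n}$ (in the interior) and of $\tau_{2}^{-n},\tau_{T}^{-n}$ (on the vertical axis) and $\tau_{1}^{-m},\tau_{T}^{-m}$ (on the horizontal axis) with those appearing in \eqref{mainc}, up to the overall factor $c_{0}$. The value at the origin is consistent because $x(0,0)=e_{0}+e_{1}=z_{1}+z_{2}$, an identity guaranteed by Condition C.

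Second, $c_{0}$ is fixed from the normalisation $\sum_{m,n\geq 0}\pi_{m,n}=1$. Under the stability assumption $\lambda_{1}<a$, $\lambda_{2}<\bar a$, the three bases $\tau_{1}^{-1},\tau_{2}^{-1},\tau_{T}^{-1}$ all lie in $(0,1)$, so every block of the sum is a standard geometric series. I would split the lattice into the interior $\{m,n\geq 1\}$, the two axes, and the origin (counted once), evaluate each block explicitly, and then set the total equal to $1$. This leaves a single linear equation in $c_{0}$ whose solution is the announced value $c_{0}=\frac{(a-\lambda_{1})(1-\lambda_{1}-\lambda_{2})}{a\bar a\lambda_{1}\bar\lambda_{1}}$.

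The only non-routine part is the algebraic consolidation in the normalisation step: the geometric sums produce rational expressions in $\lambda_{1},\lambda_{2},a$ with denominators of the form $(a-\lambda_{1})$, $(\bar a-\lambda_{2})$, $(1-\lambda_{1}-\lambda_{2})$ and $\lambda_{1}\bar\lambda_{1}\lambda_{2}\bar\lambda_{2}$, and it is only after a careful partial-fraction cancellation across all blocks that they telescope into the clean closed form above. I expect no conceptual obstacle beyond this bookkeeping; the substantive work has all been done in Theorem~\ref{mainr} and in the four compensation steps of Subsection~\ref{prel}.
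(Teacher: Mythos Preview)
Your proposal is correct and follows essentially the same approach as the paper: both treat the theorem as a direct consequence of Theorem~\ref{mainr} once the model-specific coefficients $e_{0},e_{1},c_{1},d_{1},z_{1},z_{2}$ from Steps~1--4 are substituted, with the only remaining task being the normalisation computation yielding $c_{0}=\frac{(a-\lambda_{1})(1-\lambda_{1}-\lambda_{2})}{a\bar a\lambda_{1}\bar\lambda_{1}}$. The paper's own proof is in fact even terser than yours, dismissing the normalisation as ``lengthy but straightforward calculations'' without the block-by-block outline you provide.
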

It is readily seen that \eqref{mainc} coincides with the joint probability distribution given in \cite[Theorem 3]{devos}. We can also arrive at the same result, by using the above steps, starting now with an initial solution that satisfies the interior balance equations, and the vertical boundary equations. This is due to the symmetry that conditions A, B, C, D inherit in our model; see Remark \ref{remark}.
\begin{remark}\label{r2}
Note that the initial terms $\gamma_{0}=\tau_{1}^{-1}$, (resp. $\widehat{\delta}_{0}=\tau_{2}^{-1}$, when we start compensating with an initial solution satisfying \eqref{int}-\eqref{v2}) have a clear probabilistic interpretation. In particular, $\tau_{1}^{-1}=\frac{\bar{a}\lambda_{1}}{a\bar{\lambda}_{1}}=\frac{q_{1,0}+q_{1,-1}}{q_{-1,0}+q_{-1,1}}$ (resp. $\tau_{2}^{-1}=\frac{a\lambda_{2}}{\bar{a}\bar{\lambda}_{2}}=\frac{q_{0,1}+q_{-1,1}}{q_{0,-1}+q_{1,-1}}$), which is the quotient of the probability of increasing by one the number of jobs in queue 1 (resp. queue 2) in a slot, divided by the probability of decreasing by one the number of jobs in queue 1 (resp. queue 2) in a slot. Note also that $\tau_{T}=\tau_{1}\tau_{2}$.
\end{remark}

\begin{remark}\label{rem18}
Note that the random walk discussed in this subsection (i.e., the one in  \cite{devos}), satisfies a modified version of the Detection Algorithm in \cite{chen2}; see also \cite{chen}. Remind that this algorithm provides necessary conditions for the invariant measure of a random walk to be written as a sum of geometric terms. However, in \cite{chen2}, the authors proved this result for a class of random walks that has a different structure from ours; see the transition diagram in \cite[p. 24, Fig. 1]{chen2}. Our intuition indicates that the Detection algorithm (or a slight modification) can be applied for a broader class of random walks that include ours. In particular, we can show that our random walk has an invariant measure that can be written as a sum of three product-form terms, by following the lines in \cite[subsection 3.2]{chen2} and applying a slight modification of the Detection algorithm: Let $H_{set}=\{(\gamma,\delta)\in(0,1)^{2}:(\gamma,\delta)\in K\cap H\}=\{(\tau_{1}^{-1},\tau_{T}^{-1})\}$, where $H(\gamma,\delta)=H_{0}(\gamma,\delta)+H_{1}(\gamma,\delta)$, and $V_{set}=\{(\widehat{\gamma},\widehat{\delta})\in(0,1)^{2}:(\widehat{\gamma},\widehat{\delta})\in K\cap H\}=\{(\tau_{T}^{-1},\tau_{2}^{-1})\}$, where $V(\gamma,\delta)=V_{0}(\gamma,\delta)+V_{1}(\gamma,\delta)$ (note that in the model in \cite{chen2}, the boundary behaviour is simpler, i.e., there is only a single horizontal and vertical boundary equation, while in our case there are two horizontal and two vertical boundary equations). 

The algorithm indicates that we first start from $(\rho_{1},\sigma_{1})\in H_{set}$, and construct a set $\Gamma^{H}$ as follows: $\rho_{2k}\neq \rho_{2k-1}$, $\sigma_{2k}= \sigma_{2k-1}$ and $\rho_{2k+1}= \rho_{2k}$, $\sigma_{2k+1}\neq \sigma_{2k}$, $k=1,2,\ldots$, and continue until we have $(\rho_{n},\sigma_{n})\in(0,1)^{2}$ and $(\rho_{n+1},\sigma_{n+1})\notin(0,1)^{2}$. In our case $\Gamma^{H}=\{(\tau_{1}^{-1},\tau_{T}^{-1}),(\tau_{T}^{-1},\tau_{T}^{-1}),(\tau_{T}^{-1},\tau_{2}^{-1})\}$. Similarly, starting from $(\rho_{1},\sigma_{1})\in V_{set}$, and construct a set $\Gamma^{V}$ as follows: $\rho_{2k}= \rho_{2k-1}$, $\sigma_{2k}\neq \sigma_{2k-1}$ and $\rho_{2k+1}\neq \rho_{2k}$, $\sigma_{2k+1} =\sigma_{2k}$, $k=1,2,\ldots$, and continue until we have $(\rho_{n},\sigma_{n})\in(0,1)^{2}$ and $(\rho_{n+1},\sigma_{n+1})\notin(0,1)^{2}$. In our case $\Gamma^{V}=\{(\tau_{T}^{-1},\tau_{2}^{-1}),(\tau_{T}^{-1},\tau_{T}^{-1}),(\tau_{1}^{-1},\tau_{T}^{-1})\}$.

The next step is to check whether the geometric terms are properly coupled \cite[pp. 29-30]{chen2}. According to the established conditions, for $\Gamma^{H}$, $(\rho_{3},\sigma_{3})=(\tau_{T}^{-1},\tau_{2}^{-1})\in V_{set}$, thus the invariant measure is induced by $\{(\rho_{1},\sigma_{1}),(\rho_{2},\sigma_{2}),(\rho_{3},\sigma_{3})\}=\{(\tau_{1}^{-1},\tau_{T}^{-1}),(\tau_{T}^{-1},\tau_{T}^{-1}),(\tau_{T}^{-1},\tau_{2}^{-1})\}$ and for $\Gamma^{V}$, $(\rho_{3},\sigma_{3})=(\tau_{1}^{-1},\tau_{T}^{-1})\in H_{set}$, thus the invariant measure is induced by $\{(\rho_{1},\sigma_{1}),(\rho_{2},\sigma_{2}),(\rho_{3},\sigma_{3})\}=\{(\tau_{T}^{-1},\tau_{2}^{-1}),(\tau_{T}^{-1},\tau_{T}^{-1}),(\tau_{1}^{-1},\tau_{T}^{-1})\}$. Thus, the invariant measure is written as a sum of geometric terms, as we also shown. 

Therefore, although we do not give a rigorous proof, we believe that the Detection algorithm can be applied in a broader class of random walks (so that to contains ours) than the one investigated in \cite{chen2}. 
\end{remark}
\begin{remark}
As already noted, Condition C provides insight into the properties that should be satisfied by the transition probabilities in the interior of the state space. In particular, for the model at hand we have:
\begin{enumerate}
\item $q_{0,1}q_{0,-1}=q_{-1,1}q_{1,-1}\Rightarrow \frac{q_{0,1}}{q_{-1,1}}=\frac{\lambda_{1}}{\bar{\lambda}_{1}}=\frac{q_{0,-1}}{q_{1,-1}}$.
\item $q_{0,1}q_{0,-1}=q_{-1,1}q_{1,-1}\Rightarrow \frac{q_{0,1}}{q_{1,-1}}=\frac{a\lambda_{2}}{\bar{a}\bar{\lambda}_{2}}=\tau_{2}^{-1}=\frac{q_{-1,1}}{q_{0,-1}}$. Note that $\tau_{2}^{-1}=\frac{q_{0,1}+q_{-1,1}}{q_{0,-1}+q_{1,-1}}$.
\item $q_{1,0}q_{-1,0}=q_{-1,1}q_{1,-1}\Rightarrow \frac{q_{1,0}}{q_{1,-1}}=\frac{\lambda_{2}}{\bar{\lambda}_{2}}=\frac{q_{-1,1}}{q_{-1,0}}$.
\item $q_{1,0}q_{-1,0}=q_{-1,1}q_{1,-1}\Rightarrow \frac{q_{1,0}}{q_{-1,1}}=\frac{\bar{a}\lambda_{1}}{a\bar{\lambda}_{1}}=\tau_{1}^{-1}=\frac{q_{1,-1}}{q_{-1,0}}$. Note that $\tau_{1}^{-1}=\frac{q_{1,0}+q_{1,-1}}{q_{-1,0}+q_{-1,1}}$.
\item $q_{1,0}q_{-1,0}=q_{0,1}q_{0,-1}\Rightarrow \frac{q_{0,1}}{q_{1,0}}=\frac{a}{\bar{a}}=\frac{q_{-1,0}}{q_{0,-1}}$.
\item $q_{1,0}q_{-1,0}=q_{0,1}q_{0,-1}\Rightarrow \frac{q_{0,1}}{q_{-1,0}}=\frac{\lambda_{1}\lambda_{2}}{\bar{\lambda}_{1}\bar{\lambda}_{2}}=\tau_{T}^{-1}=\frac{q_{1,0}}{q_{0,-1}}$. Note that $\tau_{T}^{-1}=\tau_{1}^{-1}\tau_{2}^{-1}$.
\end{enumerate}
\end{remark}

\subsection{The model with a single stream: Simultaneous arrivals}\label{prod}
Here we consider a modification of the model in Subsection \ref{prel}. In particular, we consider only a single stream of jobs that add a single job at each queue; see \cite[subsection 2.4]{devosphd}. More precisely, during any slot, the
number of type 1 arrivals is the same as the number of type 2 arrivals.

The author in \cite[subsection 2.4]{devosphd}, by following the lines in \cite{konheim} and using complex analytic arguments, derives in an elegant solution for the joint stationary queue length distribution; see \cite[Theorem 2.5]{devosphd}. In particular, he shows that it is of product-form (i.e., only a single product-form term is needed to satisfy all equilibrium equations). Our approach is simpler and probabilistic, and thanks to conditions A, B, C, D, the queueing model at hand belongs to the class of two-dimensional random walks presented in Subsection \ref{special}. Moreover, it is surprising that although the queue contents are correlated, the joint stationary distribution is given as a single product of two geometric terms. 

This queueing model is characterized as follows: 
\begin{itemize}
\item In each time slot, the server chooses queue $1$ (resp. 2) with probability $a$ (resp. $\bar{a}$). 
\item A job arrives at each queue with probability $\lambda$ ($\bar{\lambda}=1-\lambda$). Note that in this case the probability generating function of the arrivals is $A(z_{1},z_{2})=\bar{\lambda}+\lambda z_{1}z_{2}$.
\item Note that, contrary to the model in Subsection \ref{prel}, the number of type 1 and type 2 arrivals are correlated. Indeed, if $a_{j,k}$ denotes the number of type-$j$ arrivals during slot $k$, $j=1,2,$ then, $cov[a_{1,k},a_{2,k}]=\lambda\bar{\lambda}>0$.
\end{itemize} 
In \cite[Subsection 2.4, Theorem 2.5]{devosphd}, by using complex analytic arguments, it was shown that the invariant measure has a product-form solution, i.e., $c(\tau_{1}^{-1})^{m}(\tau_{2}^{-1})^{n}$, $m,n>0$, where now 
\begin{displaymath}
    \rho_{1}=\tau_{1}^{-1}=\frac{\bar{a}\lambda}{a\bar{\lambda}},\,\,\rho_{2}=\tau_{2}^{-1}=\frac{a\lambda}{\bar{a}\bar{\lambda}}.
\end{displaymath}
The one step transition probabilities are now (see also Figure \ref{relax1}, with $q_{0,1}^{(0)}=q_{1,0}^{(0)}=0$):
\begin{itemize}
\item $m,n>0$,
\begin{displaymath}
q_{0,1}=a\lambda,\,q_{1,0}=\bar{a}\lambda,\,q_{-1,0}=a\bar{\lambda},\,q_{0,-1}=\bar{a}\bar{\lambda}.
\end{displaymath}
\item $m>0$, $n=0$,
\begin{displaymath}
q_{0,1}^{(h)}=a\lambda,\,q_{1,1}^{(h)}=\bar{a}\lambda,\,q_{-1,0}^{(h)}=a\bar{\lambda},\,q_{0,0}^{(h)}=\bar{a}\bar{\lambda}.
\end{displaymath}
\item $m=0$, $n>0$,
\begin{displaymath}
q_{1,1}^{(v)}=a\lambda,\,q_{1,0}^{(v)}=\bar{a}\lambda,\,q_{0,0}^{(v)}=a\bar{\lambda},\,q_{0,-1}^{(v)}=\bar{a}\bar{\lambda}.
\end{displaymath}
\item $m=n=0$,
\begin{displaymath}
q_{1,1}^{(0)}=\lambda,\,q_{0,0}^{(0)}=\bar{\lambda}.
\end{displaymath}
\end{itemize}
This queueing model is stable if and only if $\lambda<\min\{a,1-a\}$; see Theorem \ref{stabi}.
The one-step transition probabilities satisfy the conditions A, and the updated versions of Conditions B, C, D; see Subsection \ref{special}. In particular:
\begin{enumerate}
\item $q_{1,1}^{(h)}=q_{1,0}$, $q_{-1,0}^{(h)}=q_{-1,0}$, $q_{1,0}^{(h)}=0=q_{1,-1}$, $q_{-1,1}^{(h)}=0=q_{-1,1}$.
\item $q_{1,1}^{(v)}=q_{0,1}$, $q_{0,-1}^{(v)}=q_{0,-1}$, $q_{0,1}^{(v)}=0=q_{-1,1}$, $q_{1,-1}^{(v)}=0=q_{1,-1}$.
\item $q_{0,1}^{(h)}=q_{0,1}$, $q_{1,0}^{(v)}=q_{1,0}$.
\item \begin{itemize}
\item $q_{0,0}^{(0)}+q_{0,0}=\bar{\lambda}=a\bar{\lambda}+\bar{a}\bar{\lambda}=q_{0,0}^{(h)}+q_{0,0}^{(v)}$,
\item $q_{0,1}^{(0)}+q_{0,1}=a\lambda=q_{0,1}^{(h)}+q_{0,1}^{(v)}$,
\item $q_{1,0}^{(0)}+q_{1,0}=\bar{a}\lambda=q_{1,0}^{(h)}+q_{1,0}^{(v)}$,
\item $q_{1,1}^{(0)}+q_{1,1}=\lambda=a\lambda+\bar{a}\lambda=q_{1,1}^{(h)}+q_{1,1}^{(v)}$.
\end{itemize}
\item $q_{0,1}q_{0,-1}=a\bar{a}\lambda\bar{\lambda}=q_{1,0}q_{-1,0}$.
\end{enumerate}
Using similar arguments as in Section \ref{general} we can  derive explicit formulae for the marginal distributions. By substituting the one-step transition probabilities for the model at hand in \eqref{mar1}, \eqref{mar2}, we obtain after some algebra:
\begin{displaymath}
\pi_{m}^{(1)}=\left\{\begin{array}{ll}
     \frac{a-\lambda}{a\bar{a}}(\tau_{1}^{-1})^{m},&m\geq 1,  \\
    1-\frac{\lambda}{a}, &m=0, 
\end{array}\right.,\,\,\pi_{n}^{(2)}=\left\{\begin{array}{ll}
     \frac{\bar{a}-\lambda}{a\bar{a}}(\tau_{2}^{-1})^{n},&n\geq 1,  \\
    1-\frac{\lambda}{\bar{a}}, &n=0. 
\end{array}\right.
\end{displaymath}
The above expressions are exactly the same as those derived in \cite[subsection 2.4]{devosphd}, by using the generating function approach.

Let $\pi_{m,n}$ the stationary joint queue length distribution. Then, the equilibrium equations are given by
\begin{equation}
\begin{array}{rl}
\pi_{m,n}=&\pi_{m,n-1}q_{0,1}+\pi_{m-1,n}q_{1,0}+\pi_{m,n+1}q_{0,-1}+\pi_{m+1,n}q_{-1,0},\,m>1,n>1,
\end{array}\label{ints}
\end{equation}
\begin{equation}
\begin{array}{rl}
\pi_{1,n}=&\pi_{1,n-1}q_{0,1}+\pi_{0,n}q_{1,0}^{(v)}+\pi_{1,n+1}q_{0,-1}+\pi_{2,n}q_{-1,0}+\pi_{0,n-1}q_{1,1}^{(v)},\,n>1,
\end{array}\label{v1s}
\end{equation}
\begin{equation}
\begin{array}{rl}
\pi_{0,n}(1-q_{0,0}^{(v)})=&\pi_{0,n+1}q_{0,-1}^{(v)}+\pi_{1,n}q_{-1,0},\,n>1,
\end{array}\label{v2s}
\end{equation}
\begin{equation}
\begin{array}{rl}
\pi_{m,1}=&\pi_{m,0}q_{0,1}^{(h)}+\pi_{m-1,1}q_{1,0}+\pi_{m,2}q_{0,-1}+\pi_{m+1,1}q_{-1,0}+\pi_{m-1,0}q_{1,1}^{(h)},\,m>1,
\end{array}\label{h1s}
\end{equation}
\begin{equation}
\begin{array}{rl}
\pi_{m,0}(1-q_{0,0}^{(h)})=&\pi_{m+1,0}q_{-1,0}^{(h)}+\pi_{m,1}q_{0,-1},\,m>1.
\end{array}\label{h2s}
\end{equation}
\begin{equation}
\pi_{0,0}(1-q_{0,0}^{(0)})=\pi_{1,0}q_{-1,0}^{(h)}+\pi_{0,1}q_{0,-1}^{(v)},
\label{00s}
\end{equation}
\begin{equation}
\pi_{0,1}(1-q_{0,0}^{(v)})=\pi_{1,1}q_{-1,0}+\pi_{0,2}q_{0,-1}^{(v)},\label{01s}
\end{equation}
\begin{equation}
\pi_{1,0}(1-q_{0,0}^{(h)})=\pi_{1,1}q_{0,-1}+\pi_{2,0}q_{-1,0}^{(h)},\label{10s}
\end{equation}
\begin{equation}
\pi_{1,1}=\pi_{0,0}q_{1,1}^{(0)}+\pi_{0,1}q_{1,0}^{(v)}+\pi_{1,2}q_{0,-1}+\pi_{2,1}q_{-1,0}+\pi_{1,0}q_{0,1}^{(h)}.\label{11s}
\end{equation}
Now, we have
\begin{displaymath}
K(\gamma,\delta)=\gamma\delta-q_{0,1}\gamma-q_{1,0}\delta-q_{0,-1}\gamma\delta^{2}-q_{-1,0}\gamma^{2}\delta.
\end{displaymath}
Moreover, substitute the product $\gamma^{m}\delta^{n}$ in the corresponding boundary equations  to obtain the following equations:
\begin{equation}
\begin{array}{rl}
V_{1}(\gamma,\delta)=&\gamma\delta-q_{0,1}\gamma-q_{1,0}^{(v)}\delta-q_{0,-1}\gamma\delta^{2}-q_{-1,0}\gamma^{2}\delta-q_{1,1}^{(v)}=0,\vspace{2mm}\\
V_{0}(\gamma,\delta)=&\delta(1-q_{0,0}^{(v)})-q_{0,-1}^{(v)}\delta^{2}-q_{-1,0}\gamma\delta=0,
\end{array}\label{vxsn}
\end{equation}
\begin{equation}
\begin{array}{rl}
H_{1}(\gamma,\delta)=&\gamma\delta-q_{0,1}^{(h)}\gamma-q_{1,0}\delta-q_{0,-1}\gamma\delta^{2}-q_{-1,0}\gamma^{2}\delta-q_{1,1}^{(h)}=0,\vspace{2mm}\\
H_{0}(\gamma,\delta)=&\gamma(1-q_{0,0}^{(h)})-q_{-1,0}^{(h)}\gamma^{2}-q_{0,-1}\gamma\delta=0.
\end{array}\label{hxsn}
\end{equation}
Set $H(\gamma,\delta):=H_{1}(\gamma,\delta)+H_{0}(\gamma,\delta)$, and $V(\gamma,\delta):=V_{1}(\gamma,\delta)+V_{0}(\gamma,\delta)$. These curves are plotted in Figure \ref{pon10}.

We now briefly describe the procedure. Observe that starting with $\gamma_{0}=\tau_{1}^{-1}$, $K(\gamma_{0},\delta)=0$ is a quadratic polynomial with two roots $\delta=\tau_{2}^{-1}$, and $\delta=1$. Clearly, the latter is rejected. Following the lines in Subsection \ref{special} the solution in \eqref{o1}  
satisfies the interior and the horizontal boundary equations where now $e_{0}=c_{0}\lambda$. By applying Lemma \ref{lemma} (i.e., a vertical compensation step with $\delta_{0}=\tau_{2}^{-1}$, for which $K(\gamma,\delta_{0})=0$ has two roots, namely $\gamma=\tau_{1}^{-1}$ and $\gamma=1$), we realize that the coefficient of the additional product-form term vanishes, so compensation is terminated by using the initial solution; see details in Section \ref{special}. Therefore, the formal solution is given by \eqref{o2}, where now, simple algebraic calculations leads to $z_{1}=e_{0}=c_{0}\lambda$.
%
Thus, \eqref{o2} (with $z_{1}=e_{0}=c_{0}\lambda_{1}$) satisfies \eqref{ints}-\eqref{h2s}. Setting,
\begin{displaymath}
x(m,0)=e_{0}\gamma_{0}^{m},\,m\geq 0,\,n=0,
\end{displaymath}
the solution in \eqref{soll} (with $z_{1}=e_{0}=c_{0}\lambda_{1}$) satisfies \eqref{ints}-\eqref{h2s}.
Following the lines as in Subsection \ref{special}, we can show that \eqref{soll} (with $z_{1}=e_{0}=c_{0}\lambda_{1}$) satisfies also the remaining equations \eqref{00s}-\eqref{11s}, and thus, it constitutes a  solution to all equilibrium equations.

Note that \eqref{soll} (with $z_{1}=e_{0}=c_{0}\lambda_{1}$) is rewritten as
\begin{equation}
x(m,n)=c_{0}\times\left\{\begin{array}{ll}
(\tau_{1}^{-1})^{m}(\tau_{2}^{-1})^{n},&m,n\geq 1,\\
\lambda(\tau_{2}^{-1})^{n},&m=0,n\geq 0,\\
\lambda(\tau_{1}^{-1})^{m},&m\geq 0,n=0.
\end{array}\right.\label{fosu12}
\end{equation}
By using the normalization condition and \eqref{fosu12}, we obtain after straightforward computations that
\begin{displaymath}
c_{0}=\frac{(a-\lambda)(\bar{a}-\lambda)}{a\bar{a}\bar{\lambda}\lambda}.
\end{displaymath}
The next theorem summarizes the main result:
\begin{theorem}
For $\lambda<\min\{a,\bar{a}\}$, and assuming that conditions A, B, C, D are satisfied, the solution to equilibrium equations \eqref{ints}-\eqref{11s} is given by 
\begin{equation}
\pi_{m,n}=\left\{\begin{array}{ll}
\frac{(a-\lambda)(\bar{a}-\lambda)}{a\bar{a}\bar{\lambda}\lambda}(\tau_{1}^{-1})^{m}(\tau_{2}^{-1})^{n},&m,n>0,\\
\frac{(a-\lambda)(\bar{a}-\lambda)}{a\bar{a}\bar{\lambda}}(\tau_{2}^{-1})^{n},&m=0,n\geq 0,\\
\frac{(a-\lambda)(\bar{a}-\lambda)}{a\bar{a}\bar{\lambda}}(\tau_{1}^{-1})^{m},&m\geq 0,n=0,\\
\frac{(a-\lambda)(\bar{a}-\lambda)}{a\bar{a}\bar{\lambda}},&m=n=0.
\end{array}\right.\label{finsu}
\end{equation}
\end{theorem}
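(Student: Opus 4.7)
The plan is to recognize that the model falls under the special case analyzed in Subsection \ref{special}, and then apply Theorem \ref{mainr1} together with a direct normalization. Specifically, since the simultaneous-arrival mechanism gives $q_{1,-1}=q_{-1,1}=0$ and $q_{0,1}^{(0)}=q_{1,0}^{(0)}=0$, I would first verify that the one-step transition probabilities listed just before the theorem satisfy Condition A, the modified Conditions B.1, B.2 (in particular $q_{0,1}^{(h)}=q_{0,1}$, $q_{1,0}^{(v)}=q_{1,0}$, $q_{1,1}^{(h)}=q_{1,0}=\bar a\lambda$, $q_{1,1}^{(v)}=q_{0,1}=a\lambda$, $q_{-1,0}^{(h)}=q_{-1,0}=a\bar\lambda$, $q_{0,-1}^{(v)}=q_{0,-1}=\bar a\bar\lambda$), the weakened Condition C ($q_{1,0}q_{-1,0}=q_{0,1}q_{0,-1}=a\bar a\lambda\bar\lambda$), and the revised Condition D as stated at the end of Subsection \ref{special}. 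This verification has already been carried out item-by-item in the text preceding the theorem, so the hypotheses of Theorem \ref{mainr1} are in force.

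Next, I would identify the factors produced by Lemma \ref{lem1} specialized to this model: $\gamma_{0}=\rho_{1}=\frac{q_{1,0}}{q_{-1,0}}=\frac{\bar a\lambda}{a\bar\lambda}=\tau_{1}^{-1}$ and $\delta_{0}=\rho_{2}=\frac{q_{0,1}}{q_{0,-1}}=\frac{a\lambda}{\bar a\bar\lambda}=\tau_{2}^{-1}$, both lying in $(0,1)$ by the assumed inequalities $\lambda<a$ and $\lambda<\bar a$ (which is exactly the stability requirement from Theorem \ref{stabi}). Theorem \ref{mainr1} then delivers
\begin{equation*}
\pi_{m,n}=c_{0}\!\left\{\begin{array}{ll}\rho_{1}^{m}\rho_{2}^{n},& m>0,\,n>0,\\[1mm] \frac{q_{0,1}}{q_{-1,0}+q_{0,1}}\rho_{2}^{n},& m=0,\,n\geq 0,\\[1mm] \frac{q_{0,1}}{q_{-1,0}+q_{0,1}}\rho_{1}^{m},& m\geq 0,\,n=0.\end{array}\right.
\end{equation*}
Substituting $q_{0,1}=a\lambda$ and $q_{-1,0}=a\bar\lambda$ yields $\frac{q_{0,1}}{q_{-1,0}+q_{0,1}}=\lambda$, which gives the boundary and corner masses $c_{0}\lambda\rho_{2}^{n}$, $c_{0}\lambda\rho_{1}^{m}$, and $c_{0}\lambda$, matching the form in \eqref{finsu} up to the determination of $c_{0}$.

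Finally, I would close the proof by computing the normalization constant. The total mass is
\begin{equation*}
1=c_{0}\!\left[\lambda+\lambda\frac{\rho_{1}}{1-\rho_{1}}+\lambda\frac{\rho_{2}}{1-\rho_{2}}+\frac{\rho_{1}\rho_{2}}{(1-\rho_{1})(1-\rho_{2})}\right]=c_{0}\,\frac{\lambda+\bar\lambda\rho_{1}\rho_{2}}{(1-\rho_{1})(1-\rho_{2})},
\end{equation*}
and using $\rho_{1}\rho_{2}=\lambda^{2}/\bar\lambda^{2}$, so that $\lambda+\bar\lambda\rho_{1}\rho_{2}=\lambda/\bar\lambda$, together with $1-\rho_{1}=\frac{a-\lambda}{a\bar\lambda}$ and $1-\rho_{2}=\frac{\bar a-\lambda}{\bar a\bar\lambda}$, one obtains $c_{0}=\frac{(a-\lambda)(\bar a-\lambda)}{a\bar a\bar\lambda\lambda}$, which completes \eqref{finsu}. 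The only non-routine step is this final algebraic simplification: the identity $\lambda+\bar\lambda\rho_{1}\rho_{2}=\lambda/\bar\lambda$ must hold in order for the normalization constant to collapse into the elegant product form stated in the theorem; everything else is an immediate invocation of Theorem \ref{mainr1}.
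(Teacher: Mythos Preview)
Your proof is correct and follows essentially the same route as the paper: verify that the simultaneous-arrival model satisfies the (revised) Conditions A, B, C, D of Subsection \ref{special}, invoke the single product-form result encapsulated in Theorem \ref{mainr1} with $\gamma_{0}=\rho_{1}=\tau_{1}^{-1}$, $\delta_{0}=\rho_{2}=\tau_{2}^{-1}$ and boundary coefficient $\frac{q_{0,1}}{q_{-1,0}+q_{0,1}}=\lambda$, and then normalize. The paper carries out the compensation steps once more explicitly for this model (finding $e_{0}=c_{0}\lambda$, $c_{1}=0$, $z_{1}=c_{0}\lambda$) before normalizing, whereas you appeal directly to Theorem \ref{mainr1}; the content is the same, and your normalization algebra, including the identity $\lambda+\bar{\lambda}\rho_{1}\rho_{2}=\lambda/\bar{\lambda}$, is correct.
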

Note that \eqref{finsu} coincides with the expression given in \cite[Theorem 2.5]{devosphd}, where the author obtained the same solution by using complex analytic arguments, by following the lines in \cite{konheim}.

We can also arrive at the same result by starting with an initial solution that satisfies the interior and the vertical boundary conditions, i.e., by compensating with initial term $\widehat{\delta}_{0}=\tau_{2}^{-1}$. 
\subsubsection{Some comments on the conditions B and D}
We now investigate  the importance of Conditions B and D (which are immediate consequences of the non-work conserving policy) for the derivation of solution \eqref{finsu}. In particular, these conditions considerably deactivate the effect of the transition probabilities from boundaries on the balance equations \eqref{00s}-\eqref{11s}, and result in a detailed balance principle among the states. This result is expected due to the product-form solution \eqref{finsu}. 

Having in mind that $1-q_{0,0}^{(0)}=q_{1,1}^{(0)}=q_{1,1}^{(h)}+q_{1,1}^{(v)}$, due to the Condition B.2, and that $q_{1,1}^{(h)}=q_{1,0}$, $q_{1,1}^{(v)}=q_{0,1}$, $q_{-1,0}^{(h)}=q_{-1,0}$, $q_{0,-1}^{(v)}=q_{0,-1}$ due to the Condition B.1, \eqref{00s} can be rewritten as
\begin{equation}
\pi_{0,0}(q_{0,1}+q_{1,0})=\pi_{1,0}q_{-1,0}+\pi_{0,1}q_{0,-1}.\label{bnm}
\end{equation}
The resulting equation states that, under Condition B, the behaviour of the process in the origin is the same as the behaviour in the interior state-space, and more importantly, we can observe that the detailed balance principle applies:
\begin{displaymath}
\pi_{0,0}q_{0,1}=\pi_{0,1}q_{0,-1}\text{ and }\pi_{0,0}q_{1,0}=\pi_{1,0}q_{-1,0}.
\end{displaymath}
Indeed, using \eqref{fosu12},
\begin{displaymath}
\pi_{0,0}q_{0,1}=\pi_{0,1}q_{0,-1}\Leftrightarrow a\lambda c_{0}\frac{\lambda}{\tau_{2}}=c_{0}\frac{\lambda}{\tau_{2}^{2}}\bar{a}\bar{\lambda}\Leftrightarrow a\lambda=\bar{a}\bar{\lambda}\frac{a\lambda}{\bar{a}\bar{\lambda}}\Leftrightarrow a\lambda=a\lambda,
\end{displaymath}
which is true. This result is expected due to the form of \eqref{finsu}, but it is surprising, since the number of arriving jobs of type 1 and 2 are correlated. Similarly, \eqref{01s}, due to  Condition B and Condition D (i.e., $q_{1,0}^{(v)}=q_{1,0}$) becomes
\begin{displaymath}
\pi_{0,1}(q_{0,1}+q_{1,0}+q_{0,-1})=\pi_{1,1}q_{-1,0}+\pi_{0,2}q_{0,-1}.
\end{displaymath}
Note that $\pi_{0,1}q_{0,1}=\pi_{0,2}q_{0,-1}$, thus,
\begin{displaymath}
\pi_{0,1}(q_{1,0}+q_{0,-1})=\pi_{1,1}q_{-1,0}.
\end{displaymath}
Similarly, \eqref{10s} is written as
\begin{displaymath}
\pi_{1,0}(q_{0,1}+q_{-1,0})=\pi_{1,1}q_{0,-1},
\end{displaymath}
since $\pi_{1,0}q_{1,0}=\pi_{2,0}q_{-1,0}$ (note that $q_{0,1}^{(h)}=q_{0,1}$ due to Condition D). 

Due to the Conditions B, D, and \eqref{bnm}, equation \eqref{11s} can be written as
 \begin{equation}
\pi_{1,1}(q_{0,1}+q_{1,0})=\pi_{1,2}q_{0,-1}+\pi_{2,1}q_{-1,0}.\label{bnm1}
\end{equation}
and observe, by using \eqref{finsu}, that
\begin{displaymath}
\pi_{1,1}q_{0,1}=\pi_{1,2}q_{0,-1}\text{ and }\pi_{1,1}q_{1,0}=\pi_{2,1}q_{-1,0}.
\end{displaymath}
Therefore, Conditions B, D, inherit the behaviour of the transition probabilities in the interior state space to the boundaries and to the origin, and for the specific model, creates a time-reversibility framework. To conclude,
\begin{displaymath}
\begin{array}{rl}
\pi_{0,n}(q_{1,0}+q_{0,-1})=&\pi_{1,n}q_{-1,0},\,n\geq 1,\\
\pi_{m,0}(q_{0,1}+q_{-1,0})=&\pi_{m,1}q_{0,-1},\,m\geq 1,\\
\pi_{1,0}=&\frac{q_{1,0}}{q_{-1,0}}\pi_{0,0}=(\tau_{1}^{-1})\pi_{0,0},\\
\pi_{0,1}=&\frac{q_{0,1}}{q_{0,-1}}\pi_{0,0}=(\tau_{2}^{-1})\pi_{0,0},\\
\pi_{m+1,0}=&\frac{q_{1,0}}{q_{-1,0}}\pi_{m,0}\Leftrightarrow\pi_{m,0}=(\tau_{1}^{-1})^{m}\pi_{0,0},\,m\geq 0,\\
\pi_{0,n+1}=&\frac{q_{0,1}}{q_{0,-1}}\pi_{n,0}\Leftrightarrow\pi_{0,n}=(\tau_{2}^{-1})^{n}\pi_{0,0},\,n\geq 0.
\end{array}
\end{displaymath}
Note that
\begin{displaymath}
\pi_{2,n}=\tau_{1}^{-1}\pi_{1,n}=\tau_{1}^{-1}\frac{q_{1,0}+q_{0,-1}}{q_{-1,0}}(\tau_{2}^{-1})^{n}\pi_{0,0}=\frac{\pi_{0,0}}{\lambda}(\tau_{1}^{-1})^{2}(\tau_{2}^{-1})^{n}.
\end{displaymath}
Similarly, we can show that
\begin{displaymath}
\pi_{m,n}=\frac{\pi_{0,0}}{\lambda}(\tau_{1}^{-1})^{m}(\tau_{2}^{-1})^{n},\,m,n\geq 1.
\end{displaymath}
Using the normalization condition, we obtain
\begin{displaymath}
\pi_{0,0}=\frac{(\bar{a}-\lambda)(a-\lambda)}{a\bar{a}\bar{\lambda}}.
\end{displaymath}

Note that the resulting solution is the same as the one derived above by using the compensation method. We revealed in this subsection the importance of conditions B, D which allow for independence of the balance equations in $(0,0)$, $(0,1)$, $(1,0)$, $(1,1)$ from the effect of boundaries. Equivalently, the transition probabilities on the boundaries, and the origin are the same as those in the interior of the state space. 
\subsection{Geometric arrivals that are probabilistically routed to the queues}\label{geoprod}
Here we consider the following modification of the model in subsection \ref{prel} (see \cite[subsection 2.5]{devosphd}): Assume that the total numbers of newly arriving jobs during consecutive slots are independent and geometrically distributed with mean $\lambda_{T}$, i.e., $a_{T}(n):=P(n\text{ job arrivals in a slot})=\frac{1}{1+\lambda_{T}}(\frac{\lambda_{T}}{1+\lambda_{T}})^{n}$. Such arrival process corresponds to a batch arrival process with geometrically distributed inter-arrival times and the batch sizes are
geometrically distributed. We further assume that an arriving job is routed to queue $k$ with probability $\frac{\lambda_{k}}{\lambda_{T}}$, $k=1,2$ ($\lambda_{1}+\lambda_{2}=\lambda_{T}$). Simple calculations show that
\begin{equation}
\begin{array}{rl}
     a(i,j)=&P(i\text{ class 1 jobs and }j\text{ class 2 jobs arrive in a slot}) =\frac{1}{1+\lambda_{T}}\binom{i+j}{i}(\frac{\lambda_{1}}{1+\lambda_{T}})^{i}(\frac{\lambda_{2}}{1+\lambda_{T}})^{j}.
\end{array}\label{vghb}
\end{equation}
The service times of jobs equal one slot. At the beginning of every time
slot, the single server randomly selects either queue 1 (with probability $a$) or queue 2 (with probability $\bar{a}=1-a$) to serve. For such a model the author in \cite[subsection 2.5]{devosphd}, using complex analytic arguments shown that the stationary joint occupancy measure is of product-form, and in particular the joint probability mass function $\pi_{m,n}=\pi_{m}^{(1)}\pi_{n}^{(2)}$, i.e., the product of the marginal distributions, with $\pi_{m}^{(1)}=(1-\frac{\lambda_{1}}{a})(\frac{\lambda_{1}}{a})^{m}$, $\pi_{n}^{(2)}=(1-\frac{\lambda_{2}}{\bar{a}})(\frac{\lambda_{2}}{\bar{a}})^{n}$. Note that this result is quite impressive since the corresponding two-dimensional random walk is no longer nearest neighbour, but instead, it has arbitrary large jumps to the East, North and North-East. The balance equations are:
\begin{equation}
    \begin{array}{rl}
         \pi_{m,n}=&a[\sum_{j=0}^{n}a(m,n-j)\pi_{0,j}+\sum_{i=0}^{m}\sum_{j=0}^{n}a(m-i,n-j)\pi_{i+1,j}]\vspace{2mm}\\&+\bar{a}[\sum_{i=0}^{m}a(m-i,n)\pi_{i,0}+\sum_{i=0}^{m}\sum_{j=0}^{n}a(m-i,n-j)\pi_{i,j+1}],\,m,n\geq 0.
    \end{array}\label{be}
\end{equation}

In \cite{viss1,viss2}, the authors studied two-dimensional random walks with geometric jumps to the North-West, and used a method, closely related to the compensation approach, to solve directly the equilibrium equations. The joint equilibrium distribution was of product-form. Contrary to our case, in \cite{viss1,viss2}, the equilibrium equations contain a single finite sum, and this helped them upon substituting $\pi_{m,n}=\gamma^{m}\delta^{n}$, to reduce the resulting equation to an equation (called the $\Delta-$equation) that was independent of the state of the random walk (as was $K(\gamma,\delta)=0$ in subsection \ref{prel}). In our case, we have geometric jumps to the East, North and North-East, and thus, the interior equations when we substitute $\pi_{m,n}=\gamma^{m}\delta^{n}$ contain multiple finite sums.

We prove the same result as in \cite[subsection 2.5]{devosphd}, by solving directly the equilibrium equations using a similar framework as in \cite{viss1} (i.e., without complex analytic arguments, or partial balance principle). We summarize the steps below:
\begin{enumerate}
    \item We obtain the marginal distributions using the concept of the $\Delta-$equation; see \cite{viss1}. The marginal distributions will be of geometric form.
    \item The knowledge of the marginal distributions provides candidate solutions for the joint distribution. Substituting $\pi_{m,n}=\gamma^{m}\delta^{n}$, with $\gamma$ (resp. $\delta$) be the geometric term of the marginal distribution of the queue 1 (resp. queue 2), and using the concept of the $\Delta-$equation, we come up with a linear equation with respect to $\delta$ (resp. $\gamma$), which "seems" to depend on the state of the random walk. However, we show that for any state of the random walk, this $\delta$ is unique. Thus, the form of the arrival process, along with the non-work conserving service policy, provides a single $\delta$ (resp. $\gamma$), irrespective of the state of the random walk. 
\end{enumerate}

Let $\pi_{m}^{(1)}:=\sum_{n=0}^{\infty}\pi_{m,n}$, and note that $a_{1}(i):=\sum_{j=0}^{\infty}a(i,j)=\frac{1}{1+\lambda_{1}}(\frac{\lambda_{1}}{1+\lambda_{1}})^{i}$. Sum \eqref{be} for all $n\geq 0$, to obtain
\begin{equation}
    \begin{array}{c}
         \pi_{m}^{(1)}=a a_{1}(m)\pi_{0}^{(1)}+\sum_{i=0}^{m}a_{1}(m-i)[a\pi_{i+1}^{(1)}+\bar{a}\pi_{i}^{(1)}],\,m\geq 0.
    \end{array}\label{be1}
\end{equation}
Set in \eqref{be1}, $\pi_{m}^{(1)}=c\gamma^{m}$, $m\geq 0$, where $c$ a constant. Then, we arrive at
\begin{equation}
    \gamma^{m}=aa_{1}(m)+\sum_{i=0}^{m}a_{1}(m-i)[a\gamma^{i+1}+\bar{a}\gamma^{i}].\label{be2}
\end{equation}
Note that \eqref{be2} depends on the state at queue 1. Consider first \eqref{be2} for $m=l+1$. Then, multiply \eqref{be2} (for $m=l$) with $\gamma$. Now subtract the resulting equations to obtain the so called $\Delta$-equation, which in our case will give
\begin{displaymath}
\gamma=\frac{a_{1}(l+1)}{a(a_{1}(l)-a_{1}(l+1))}=\frac{\lambda_{1}}{a}.
\end{displaymath}
Note that under such an operation all the equilibrium equations \eqref{be1} are satisfied for such a $\gamma$. Thus, $\pi_{m}^{(1)}=c\gamma^{m}$, $m\geq 0$, so that $c=1-\gamma$, $\gamma=\frac{\lambda_{1}}{a}$. Similarly, we can show that $\pi_{n}^{(2)}=(1-\delta)\delta^{n}$, $n\geq 0$, $\delta=\frac{\lambda_{2}}{\bar{a}}$. This terminates step 1. 

Having in mind the form of the marginal distributions, we consider the following candidate solution for \eqref{be}: $\pi_{m,n}=d\gamma^{m}\delta^{n}$, with $\gamma=\lambda_{1}/a$. Substituting in \eqref{be} yields 
\begin{equation}
    \begin{array}{rl}         \gamma^{m}\delta^{n}=&a[\sum_{j=0}^{n}a(m,n-j)\delta^{j}+\sum_{i=0}^{m}\sum_{j=0}^{n}a(m-i,n-j)\gamma^{i+1}\delta^{j}]\vspace{2mm}\\&+\bar{a}[\sum_{i=0}^{m}a(m-i,n)\gamma^{i}+\sum_{i=0}^{m}\sum_{j=0}^{n}a(m-i,n-j)\gamma^{i}\delta^{j+1}],\,m,n\geq 0.
    \end{array}\label{be3}
\end{equation}
Contrary to \eqref{be2}, in \eqref{be3} we have a double finite sum, and for the given arrival process it seems impossible to derive an equation for obtaining $\delta$ that is independent of $m$ and $n$. We now apply a similar procedure as in step 1. Consider first \eqref{be3} for $n=k+1$, and then multiply \eqref{be3} (where $n=k$) with $\delta$. Now subtract the resulting equations to obtain after some algebra that for any $m\geq 0$,
\begin{equation}
    \delta\bar{a}\sum_{i=0}^{m}\gamma^{i}(a(m-i,k)-a(m-i,k+1))=aa(m,k+1)+(\bar{a}+\lambda_{1})\sum_{i=0}^{m}\gamma^{i}a(m-i,k+1).\label{be4}
\end{equation}
As before, the candidate solution with $\gamma=\lambda_{1}/a$ and $\delta$ the one that satisfies \eqref{be4}, satisfies \eqref{be3}. Consider \eqref{be4} for $m=l+1$, then multiply \eqref{be4} (where $m=l$) with $\gamma$, and subtract the resulting equations, to obtain the following $\Delta-$equation, which seems to be dependent of $l$ and $k$:
\begin{equation}
    \delta=\frac{a(l+1,k+1)(1+\lambda_{1})-\lambda_{1}a(l,k+1)}{\bar{a}(a(l+1,k)-a(l+1,k+1))}.\label{be5}
\end{equation}
Substituting $a(i,j)$ (as given in \eqref{vghb}) in \eqref{be5} yields after some algebra
\begin{displaymath}
    \delta=\frac{\lambda_{2}}{\bar{a}}\times\left(\frac{\frac{1+\lambda_{1}}{1+\lambda_{T}}\binom{l+k+2}{l+1}-\binom{l+k+1}{l}}{\binom{l+k+1}{l+1}-\frac{\lambda_{2}}{1+\lambda_{T}}\binom{l+k+2}{l+1}}\right). 
\end{displaymath}
It is easy to realize that the term in parentheses equals to 1, for any values of $l$, $k$. Indeed,
\begin{displaymath}
\begin{array}{rl}
    \frac{\frac{1+\lambda_{1}}{1+\lambda_{T}}\binom{l+k+2}{l+1}-\binom{l+k+1}{l}}{\binom{l+k+1}{l+1}-\frac{\lambda_{2}}{1+\lambda_{T}}\binom{l+k+2}{l+1}}=&1 \Leftrightarrow \vspace{2mm}\\
    \frac{1+\lambda_{1}}{1+\lambda_{T}}\binom{l+k+2}{l+1}-\binom{l+k+1}{l}=& \binom{l+k+1}{l+1}-\frac{\lambda_{2}}{1+\lambda_{T}}\binom{l+k+2}{l+1}\Leftrightarrow \vspace{2mm}\\
    \binom{l+k+2}{l+1}=&\binom{l+k+1}{l}+\binom{l+k+1}{l+1},
\end{array}
\end{displaymath}
which holds due to the Pascal's triangle.

Thus, for $\gamma=\lambda_{1}/a$ there is a unique $\delta=\lambda_{2}/\bar{a}$, so that $\pi_{m,n}=d\gamma^{m}\delta^{n}$, $m,n\geq 0$ satisfies \eqref{be3}. Normalization condition implies that $d=(1-\gamma)(1-\delta)$, and thus, $\pi_{m,n}=\pi_{1}^{(m)}\pi_{2}^{(n)}$, is the joint stationary distribution. 
\section{Discussion}\label{discussion}
Note that the characterization result in Theorem \ref{mainr} requires that from any point of the interior state space, we cannot have transitions to the North-East and South-West; i.e., Condition A. This is because, in case we allow such transitions, Condition B is no longer valid. We show in the following that by violating this requirement the resulting invariant measure cannot be written as a sum of three geometric terms; see Subsections \ref{pairedn}, \ref{ne}. In Subsection \ref{genn}, by completely violating Condition A, we briefly show how the conditions B, C, D, have to be adapted, so that the resulting random walk has a single product-form invariant measure.

Let us briefly comment on the importance of Conditions B, C, D, when our aim is to construct two-dimensional random walks with an invariant measure that can be obtained by a finite compensation procedure. We have to mention that the whole procedure can be divided in two parts:
\begin{enumerate}
    \item \textit{Part 1:} This part is common for all the two-dimensional random walks that are analyzed through the compensation approach: to obtain a formal solution to the interior, and the boundary equilibrium equations (i.e., the equations \eqref{int}-\eqref{h2}). In our case, although we allow transitions from the interior to the North and East, i.e., by violating a fundamental requirement in applying the compensation method \cite{adanaplprob}, we are able to apply the compensation method thanks to conditions A, B, C. More precisely, Condition B helps to show that the marginal distributions are of geometric form, and provides us with a strong intuition regarding the values of the initial terms that we need to start the compensation method, i.e., the $\rho_{1}$, or $\rho_{2}$. These initial terms are of high importance when we are about to apply the compensation method. It also helps in obtaining the formal solution (e.g., in deriving  the compensation coefficients) to \eqref{int}-\eqref{h2}. 
    More importantly, by using condition C, the compensation method is terminated in exactly three steps. Thus, by using conditions A, B, C, we are able to find a formal solution satisfying \eqref{int}-\eqref{h2}.
    \item \textit{Part 2:} The second part is not used in the standard compensation approach. In particular, in the standard compensation method \cite{adanaplprob}, we do not pay attention to the equilibrium equations \eqref{00}-\eqref{11} when we construct the stationary distribution as an infinite sum of product-form terms. In such a case, the solution most likely leads to a divergent series. Thus, the fundamental requirement of no transitions to the North, East, and North-East allows to find a convergent solution for states away from the origin. However, it is most likely that this solution diverges for states close to the origin of the state space. Note that this solution satisfies the inner and the boundary equilibrium equations except from the equations that refer to states close to the origin. In such a case, these equilibrium equations have to be solved numerically from the convergent solution that was previously derived by the (standard) compensation method. In our work, where a finite number of geometric terms is needed, there is no need to worry about the convergence of the sequences of the terms, as well as of the solution of the inner and the boundary equilibrium equations. However, the corresponding solution must also satisfies the remaining equilibrium equations at the states close to the origin. This is why it is of highest importance to show that the formal solution of \eqref{int}-\eqref{h2} (obtained in Part 1) also satisfies \eqref{00}-\eqref{11}. At that point, Condition D (as well as Condition B.2) is crucial. More precisely, condition D provides the proper relation among $q_{0,1}^{(h)}$, $q_{1,0}^{(v)}$ with the transition probabilities in the interior, so that \eqref{solgf1} to satisfy \eqref{01}, \eqref{10}. Condition B.2 relates the transition probabilities at the origin with those at the boundaries, and it is crucial to show that \eqref{solgf1} satisfies \eqref{11}.
\end{enumerate} 

We realize that in constructing two-dimensional random walks with an invariant measure written as a linear combination of a finite number of geometric terms, the locations of the intersections of $K$, $H$ and $V$ are crucial. Remind that, by starting from the intersection of $K$ with $H$, we end up to the intersection of $K$ with $V$, and vice-versa, we finally construct an invariant measure as a unique mixture of a finite number of geometric terms. In particular, there exists a \textit{pairwise-coupled} set \cite{chen} connecting the intersection of $K$ with $H$ to the intersection of $K$ with $V$, and vise versa; see also Remark \ref{remark}.

Several interesting tasks are open for future research. An interesting task is to investigate the case where we can have a finite number (but more than three) of geometric terms if we relax Condition C. Having in mind that Conditions A, B, D should be satisfied, we expect to have again two starting initial terms ($\rho_{1}$ or $\rho_{2}$). So, there are three cases:
\begin{enumerate}
\item the path starting from either initial couple, must end to the other one and is exactly the same when we traverse in the opposite direction (i.e., if we start from $(\rho_{1},\rho_{1}\rho_{2})$ we must end to $(\rho_{1}\rho_{2},\rho_{2})$ and vice-versa). This case would be ideal, and the queueing model in \cite{devos} belongs to that framework. In that case both paths (and as a result the generated couples) are identical.
\item There are different paths when we start from either initial couple, and the solution will be derived by the linear combination of the common couples. In that case, when we start from either initial couple, we must visit the other initial couple, i.e., it is important that both initial couples belong to the intersection of the couples generated by either path. Our intuition is that, to apply the compensation method, we must generate the same couples when we start from either initial couple. So we expect that when Conditions A, B, D  are satisfied, compensation method will result in the same path.
\item Finally, there is also an option to have two different solutions (different paths with finite number of product-forms), generated by each starting couple, and thus, the sum of these solutions to provide the invariant measure.
\end{enumerate}
In this direction, it would be of great interest to further investigate the connection of our work with the one in \cite{diek}. We remind that in \cite{diek}, the authors provide conditions under which the stationary distribution of a reflected Brownian motion with specific constraints on the covariance and reflection matrices can be written as a finite sum of exponential terms. We believe that further understanding of the machinery of the transitions in \cite{diek} 
will be crucial in understanding how to construct discrete state space random walks with invariant measures that are represented as a sum of more than three (but finitely many) product-form terms.

Another interesting task is to see whether the finite compensation procedure applies when condition B is partly satisfied, so that we can have only one initial term, e.g., only $\gamma_{0}$, and an initial solution satisfying the interior and the horizontal boundary.




In the future, we plan to extend the results obtained in Section \ref{general}, to multidimensional random walks. The results derived in Subsections \ref{special}, \ref{geoprod}, where a single product-form is sufficient, are quite promising. The form of the marginal distribution seems to be crucial in proving whether the compensation procedure can be applied. Another direction is to consider Markov-modulated two-dimensional random walks, by assuming that the arrivals form a discrete-time PH distribution, e.g., when the interarrival time follows a negative binomial distribution (at least for the model in Subsection \ref{prod}). Our experience \cite{adanerl}, shows that the compensation method can be applied in a similar setting. A first step in this direction might be to consider a modulation that allows a completely tractable analysis, e.g., when a change in the phase does not immediately trigger a transition of the level process but changes its dynamics (indirect interaction).

Another interesting direction is to consider bounded transitions (greater than one step, but always bounded) to the South-East and North-West. Our motivation stems from the fact that compensation approach is shown to be not affected by such kind of transitions \cite{adanerl,sax}. Such an assumption will result in a random walk with a more complicated boundary behaviour.

In the following, we discuss three queueing examples (see Subsections \ref{wcon}, \ref{pairedn}, \ref{ne}), related to those in Section \ref{motivation}, for which conditions A, B, C, D (all or some of them) do not hold. We will see that their joint stationary queue-length distribution cannot be derived by using the finite compensation procedure. We end this section, by considering a two-dimensional random walk for which condition A is relaxed, i.e., $q_{1,1}>0$, $q_{-1,-1}>0$. We show (see Subsection \ref{genn}) how we can adapt the rest of the conditions so that the resulting random walk has a single product-form invariant measure.     

\subsection{The model under the work-conserving policy}\label{wcon}
Consider now the companion model of the one discussed in Subsection \ref{prel}. It is easy to see that the model at hand under the work conserving policy is a special case of the model considered in \cite{walra}, when $A(z_{1},z_{2})=A_{1}(z_{1})A_{2}(z_{2})$, with $A_{k}(z_{k})=\bar{\lambda}_{k}+\lambda_{k}z_{k}$, $k=1,2$, and $\mu_{1}=\mu_{2}=1$. 
Note that the stability condition of the work conserving case ($\lambda_{1}+\lambda_{2}<1$) is different from the one in the non-work conserving case considered in subsection \ref{prel}. In the work-conserving case, if one queue is empty, then the server will serve with probability 1, the non-empty queue.

The stationary behaviour of the work-conserving model can only be treated by using the theory of boundary value problems \cite{cohbox,fay1}, or by applying approximation methods as in \cite{walra}, and definitely, its invariant measure cannot be written as a finite sum of product-form terms; see also Fig. \ref{pw1} for the curves $K(\gamma,\delta) = 0$, $H(\gamma,\delta) = 0$, $V(\gamma,\delta) = 0$.  

Note that under the work-conserving policy, the one-step transition probabilities from any point at the interior and from the origin, coincide with those under the non-work conserving policy given in Section \ref{prel}. However, this is not the case for the transition probabilities at the boundaries. Moreover, due to the work-conserving policy, from any point on the horizontal (resp. vertical) boundary is not allowed to have one-step displacements to the East (resp. North). Specifically, the one step transition probabilities from the boundaries are now given by:
\begin{enumerate}
\item For $m>0,n=0$,
\begin{displaymath}
q_{-1,0}^{(h)}=\bar{\lambda}_{1}\bar{\lambda}_{2},\,q_{0,0}^{(h)}=\lambda_{1}\bar{\lambda}_{2},\,q_{-1,1}^{(h)}=\bar{\lambda}_{1}\lambda_{2},\,q_{0,1}^{(h)}=\lambda_{1}\lambda_{2}.
\end{displaymath}
\item For $m=0,n>0$,
\begin{displaymath}
q_{0,-1}^{(v)}=q_{-1,0}^{(h)},\,q_{1,-1}^{(h)}=q_{0,0}^{(h)},\,q_{0,0}^{(v)}=q_{-1,1}^{(h)},\,q_{1,0}^{(v)}=q_{0,1}^{(h)}.
\end{displaymath}
\end{enumerate}
\begin{figure}[ht!]
\centering
\includegraphics[scale=0.5]{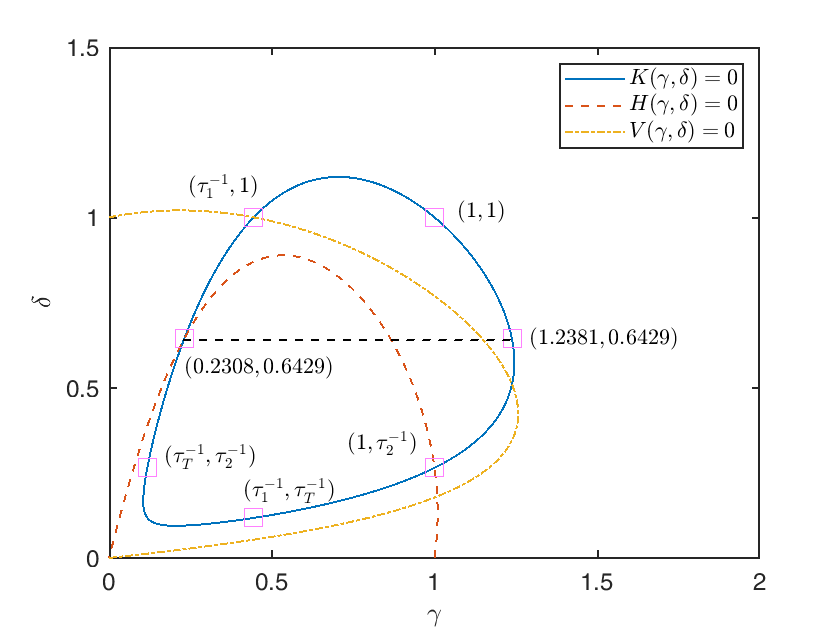}
\caption{The curves $K(\gamma,\delta)=0$, $H(\gamma,\delta)=0$, $V(\gamma,\delta)=0$, for $\lambda_{1}=0.4$, $\lambda_{2}=0.15$, $a=0.6$.}\label{pw1}
\end{figure}

It is easy to realize that \textbf{none} of the conditions B, and D are satisfied. In particular:
\begin{enumerate}
\item Regarding Condition B.1:
\begin{itemize}
\item $q_{0,1}> q_{1,1}^{(v)}=0$, $q_{-1,1}> q_{0,1}^{(v)}=0$, $q_{0,-1}\neq q_{0,-1}^{(v)}$, $q_{1,-1}\neq q_{1,-1}^{(v)}$.
\item $q_{1,0}> q_{1,1}^{(h)}=0$, $q_{1,-1}> q_{1,0}^{(h)}=0$, $
q_{-1,0}\neq q_{-1,0}^{(h)}$, $q_{-1,1}\neq q_{-1,1}^{(h)}.$
\end{itemize}
\item Regarding Condition B.2:
\begin{displaymath}
\begin{array}{rl}
q_{0,0}^{(0)}+q_{0,0}=&q_{0,0}^{(h)}+q_{0,0}^{(v)}\Leftrightarrow\bar{\lambda}_{1}\bar{\lambda}_{2}+\alpha\lambda_{1}\bar{\lambda}_{2}+\bar{a}\bar{\lambda}_{1}\lambda_{2}= \lambda_{1}\bar{\lambda}_{2}+\bar{\lambda}_{1}\lambda_{2}\Leftrightarrow\vspace{2mm}\\
\bar{\lambda}_{1}\bar{\lambda}_{2}=&\lambda_{1}\bar{\lambda}_{2}\bar{a}+\bar{\lambda}_{1}\lambda_{2}a\Leftrightarrow\vspace{2mm}\\
\frac{\lambda_{1}}{\bar{\lambda}_{1}}\bar{a}+\frac{\lambda_{2}}{\bar{\lambda}_{2}}a=&1.
\end{array}
\end{displaymath}
Moreover,
\begin{displaymath}
\begin{array}{rl}
q_{0,1}^{(0)}+q_{0,1}=&q_{0,1}^{(h)}+q_{0,1}^{(v)}\Leftrightarrow \bar{\lambda}_{1}\lambda_{2}+a\lambda_{1}\lambda_{2}=\lambda_{1}\lambda_{2}\Leftrightarrow
\bar{a}=\frac{\bar{\lambda}_{1}}{\lambda_{1}},\vspace{2mm}\\
q_{1,0}^{(0)}+q_{1,0}=&q_{1,0}^{(h)}+q_{1,0}^{(v)}\Leftrightarrow \lambda_{1}\bar{\lambda}_{2}+\bar{a}\lambda_{1}\lambda_{2}= \lambda_{1}\lambda_{2}\Leftrightarrow\vspace{2mm}
a=\frac{\bar{\lambda}_{2}}{\lambda_{2}},\vspace{2mm}\\
q_{1,1}^{(0)}=\lambda_{1}\lambda_{2}>&q_{1,1}^{(h)}+q_{1,1}^{(v)}=0.
\end{array}
\end{displaymath}
Note that the first three requirements cannot be satisfied simultaneously, since using the second and the third one, the first one cannot be satisfied. Moreover, the forth one obviously cannot be satisfied.
\item Regarding Condition D:
Note that
\begin{displaymath}
\begin{array}{rl}
     q_{0,1}^{(h)}=\lambda_{1}\lambda_{2}\neq &q_{0,1}+\frac{q_{-1,1}q_{1,0}}{q_{0,1}}=a\lambda_{1}\lambda_{2}+\bar{a}\bar{\lambda}_{1}\lambda_{2},  \\
    q_{1,0}^{(v)}=\lambda_{1}\lambda_{2}\neq &q_{1,0}+\frac{q_{1,-1}q_{0,1}}{q_{1,0}}=\bar{a}\lambda_{1}\lambda_{2}+a\bar{\lambda}_{2}\lambda_{1}. 
\end{array}
\end{displaymath}
\end{enumerate}
%
\subsection{The model with an option for paired service: Allowing transitions to the South-West}\label{pairedn}
We consider a modification of the model in \cite{devos}, which offers an option for paired services, i.e., at the beginning of a slot with probability $a_{0}$, the server chooses to serve a job from each queue (paired service), and with probability $a_{k}$, $k=1,2,$ chooses to serve a job from queue $k$. The rest is as in the main model in Subsection \ref{prel}. Note that under the option of paired services, we allow transitions from an interior point of the state space to the South-West. 

The one-step transition probabilities are:
\begin{itemize}
\item $m,n>0$
\begin{displaymath}
\begin{array}{c}
q_{0,0}=a_{0}\lambda_{1}\lambda_{2}+a_{1}\lambda_{1}\bar{\lambda}_{2}+a_{2}\lambda_{2}\bar{\lambda}_{1},\,q_{-1,0}=a_{0}\bar{\lambda}_{1}\lambda_{2}+a_{1}\bar{\lambda}_{1}\bar{\lambda}_{2},\,q_{0,-1}=a_{0}\bar{\lambda}_{2}\lambda_{1}+a_{2}\bar{\lambda}_{1}\bar{\lambda}_{2},\,q_{-1,-1}=a_{0}\bar{\lambda}_{1}\bar{\lambda}_{2},\\
q_{0,1}=a_{1}\lambda_{1}\lambda_{2},\,q_{-1,1}=a_{1}\bar{\lambda}_{1}\lambda_{2},\,q_{1,0}=a_{2}\lambda_{1}\lambda_{2},\,q_{1,-1}=a_{2}\lambda_{1}\bar{\lambda}_{2}.
\end{array}
\end{displaymath}
\item $m>0,n=0$
\begin{displaymath}
\begin{array}{c}
q_{0,0}^{(h)}=(a_{0}+a_{1})\lambda_{1}\bar{\lambda}_{2}+a_{2}\bar{\lambda}_{2}\bar{\lambda}_{1},\,q_{-1,0}^{(h)}=(a_{0}+a_{1})\bar{\lambda}_{1}\bar{\lambda}_{2},\\
q_{0,1}^{(h)}=(a_{0}+a_{1})\lambda_{1}\lambda_{2}+a_{2}\bar{\lambda}_{1}\lambda_{2},\,q_{-1,1}^{(h)}=(a_{0}+a_{1})\bar{\lambda}_{1}\lambda_{2},\,q_{1,0}^{(h)}=a_{2}\lambda_{1}\bar{\lambda}_{2},\,q_{1,1}^{(h)}=a_{2}\lambda_{1}\lambda_{2}.
\end{array}
\end{displaymath}
\item $m=0,n>0$
\begin{displaymath}
\begin{array}{c}
q_{0,0}^{(v)}=(a_{0}+a_{2})\lambda_{2}\bar{\lambda}_{1}+a_{1}\bar{\lambda}_{2}\bar{\lambda}_{1},\,q_{0,-1}^{(v)}=(a_{0}+a_{2})\bar{\lambda}_{1}\bar{\lambda}_{2},\\
q_{1,0}^{(v)}=(a_{0}+a_{2})\lambda_{1}\lambda_{2}+a_{1}\bar{\lambda}_{2}\lambda_{1},\,q_{1,-1}^{(v)}=(a_{0}+a_{2})\bar{\lambda}_{2}\lambda_{1},\,q_{0,1}^{(v)}=a_{1}\lambda_{2}\bar{\lambda}_{1},\,q_{1,1}^{(v)}=a_{1}\lambda_{1}\lambda_{2}.
\end{array}
\end{displaymath}
\item $m=n=0$, as in the original model.
\end{itemize}

Following \cite{fayo}, the stability condition is $\lambda_{1}<a_{0}+a_{1}$, $\lambda_{2}<a_{0}+a_{2}$, while the balance principle ensures that the marginal probabilities are of geometric form, i.e., $\pi_{m}^{(1)}=C_{1}(\tau_{1}^{-1})^{m}$, $m\geq 1$, $\pi_{n}^{(2)}=C_{2}(\tau_{2}^{-1})^{n}$, $n\geq 1$, for
\begin{displaymath}
    \tau_{1}^{-1}:=\frac{a_{2}\lambda_{1}}{(a_{0}+a_{1})\bar{\lambda}_{1}},\,\, \tau_{2}^{-1}:=\frac{a_{1}\lambda_{2}}{(a_{0}+a_{2})\bar{\lambda}_{2}}.
\end{displaymath}


Note that in general, the model with two queues and paired services (a simpler version of ours without transitions from an interior point to the North and East) is known to be analysed by Cohen \cite{coh1} with the uniformization technique (for a nearest-neighbour random walk model), and in \cite{blanc} with the aid of a Riemann boundary value problem, for the case of arbitrarily distributed services times. So its invariant measure cannot be written as a sum of finite geometric terms. It is readily seen that this random walk does not satisfy the Conditions A, B, C, D. Condition A is not satisfied, since $q_{-1,-1}>0$. Moreover, Condition B.1 is not satisfied, since
\begin{itemize}
\item although $q_{1,1}^{(h)}=q_{1,0}$, $q_{1,0}^{(h)}=q_{1,-1}$, we have $q_{-1,0}^{(h)}\neq q_{-1,0}$, $q_{-1,1}^{(h)}\neq q_{-1,1}$,
\item although $q_{1,1}^{(v)}=q_{0,1}$, $q_{0,1}^{(v)}=q_{-1,1}$, we have $q_{0,-1}^{(v)}\neq q_{0,-1}$, $q_{1,-1}^{(v)}\neq q_{1,-1}$.
\end{itemize}
Condition B.2 is not satisfied, since
\begin{itemize}
\item 
$q_{0,0}^{(0)}+q_{0,0}=q_{0,0}^{(h)}+q_{0,0}^{(v)} \, \Leftrightarrow \, \bar{\lambda}_{1}\bar{\lambda}_{2}+\lambda_{1}\lambda_{2}=\bar{\lambda}_{1}\lambda_{2}+\lambda_{1}\bar{\lambda}_{2}$,
\item $q_{0,1}^{(0)}+q_{0,1}=q_{0,1}^{(h)}+q_{0,1}^{(v)}\Leftrightarrow \lambda_{1}=\frac{1}{2}$,
\item $q_{1,0}^{(0)}+q_{1,0}=q_{1,0}^{(h)}+q_{1,0}^{(v)}\Leftrightarrow \lambda_{2}=\frac{1}{2}$.
Note that for $\lambda_{1}=\lambda_{2}=1/2$, the above conditions are satisfied. 
\item However, $q_{1,1}^{(0)}=q_{1,1}^{(h)}+q_{1,1}^{(v)}\Leftrightarrow 1=a_{1}+a_{2}$, which is not true. Thus, condition B.2 is not satisfied.
\end{itemize}
\begin{figure}[ht!]
\centering
\includegraphics[scale=0.5]{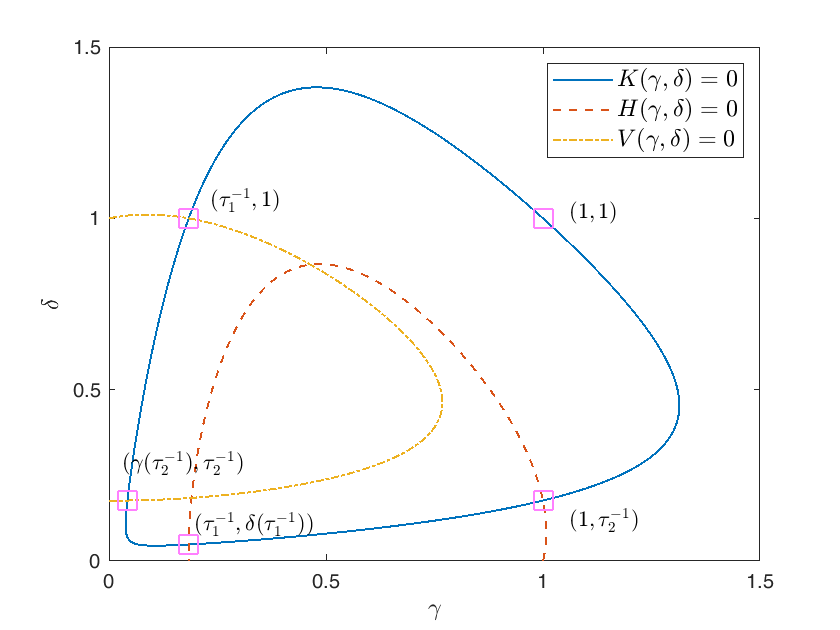}
\caption{The curves $K(\gamma,\delta)=0$, $H(\gamma,\delta)=0$, $V(\gamma,\delta)=0$, for $\lambda_{1}=0.3$, $\lambda_{2}=0.15$, $a_{0}=0.2$, $a_{1}=0.5$, $a_{2}=0.3$, for the model with an option for paired service, where $\delta(\tau_{1}^{-1})=\frac{q_{-1,1}+q_{0,1}\tau_{1}}{q_{-1,-1}+\tau_{1}q_{0,-1}+q_{1,-1}\tau_{1}^{2}}$, $\gamma(\tau_{2}^{-1})=\frac{q_{1,-1}+q_{1,0}\tau_{2}}{q_{-1,-1}+\tau_{2}q_{-1,0}+q_{-1,1}\tau_{2}^{2}}$.}\label{pon1}
\end{figure}
\subsection{The model with a false service initiation: Allowing transitions to the North-East}\label{ne}
We consider the following modification of the model in Subsection \ref{prel}:
\begin{itemize}
\item When the server chooses the next queue to serve,  the switch to this  queue is successful with probability, say $b$, whereas with probability $\bar{b}=1-b$, the server fails to switch to the preferred queue. Thus, there is the possibility of no service in a slot.
\item Note that under the assumption of false service initiations, we allow from the interior transitions to the North-East. 
\item Note that when $b=1$, we have the model in \cite{devos}.
\end{itemize}
The transition probabilities are now
\begin{enumerate}
\item $m,n>0$,
\begin{displaymath}
\begin{array}{c}
q_{0,1}=ab\lambda_{1}\lambda_{2}+\bar{b}\bar{\lambda}_{1}\lambda_{2},\,q_{-1,1}=ab\bar{\lambda}_{1}\lambda_{2},\,q_{0,0}=ab\lambda_{1}\bar{\lambda}_{2}+\bar{a}b\bar{\lambda}_{1}\lambda_{2}+\bar{b}\bar{\lambda}_{1}\bar{\lambda}_{2},\\q_{1,0}=\bar{a}b\lambda_{2}\lambda_{1}+\bar{b}\lambda_{1}\bar{\lambda}_{2},\,q_{1,-1}=\bar{a}b\lambda_{1}\bar{\lambda}_{2},\,q_{-1,0}=ab\bar{\lambda}_{1}\bar{\lambda}_{2},\,\,q_{0,-1}=\bar{a}b\bar{\lambda}_{1}\bar{\lambda}_{2},\,q_{1,1}=\bar{b}\lambda_{1}\lambda_{2},
\end{array}
\end{displaymath}
\item $m>0,n=0$,
\begin{displaymath}
\begin{array}{c}
q_{0,1}^{(h)}=ab\lambda_{1}\lambda_{2}+(\bar{a}b+a\bar{b}+\bar{a}\bar{b})\bar{\lambda}_{1}\lambda_{2},\,q_{-1,1}^{(h)}=ab\bar{\lambda}_{1}\lambda_{2},\,q_{0,0}^{(h)}=ab\lambda_{1}\bar{\lambda}_{2}+(\bar{a}b+a\bar{b}+\bar{a}\bar{b})\bar{\lambda}_{1}\bar{\lambda}_{2},\\q_{1,0}^{(h)}=(\bar{a}b+a\bar{b}+\bar{a}\bar{b})\bar{\lambda}_{2}\lambda_{1},\,q_{1,1}^{(h)}=(\bar{a}b+a\bar{b}+\bar{a}\bar{b})\lambda_{1}\lambda_{2},\,q_{-1,0}^{(h)}=ab\bar{\lambda}_{1}\bar{\lambda}_{2},
\end{array}
\end{displaymath}
\item $m=0,n>0$,
\begin{displaymath}
\begin{array}{c}
q_{0,1}^{(v)}=(ab+a\bar{b}+\bar{a}\bar{b})\bar{\lambda}_{1}\lambda_{2},\,q_{1,-1}^{(v)}=\bar{a}b\lambda_{1}\bar{\lambda}_{2},\,q_{0,0}^{(v)}=(ab+a\bar{b}+\bar{a}\bar{b})\bar{\lambda}_{1}\bar{\lambda}_{2}+\bar{a}b\bar{\lambda}_{1}\lambda_{2},\\q_{1,0}^{(v)}=\bar{a}b\lambda_{1}\lambda_{2}+(ab+a\bar{b}+\bar{a}\bar{b})\lambda_{1}\bar{\lambda}_{2},\,q_{1,1}^{(v)}=(ab+a\bar{b}+\bar{a}\bar{b})\lambda_{1}\lambda_{2},\,q_{0,-1}^{(v)}=\bar{a}b\bar{\lambda}_{1}\bar{\lambda}_{2},
\end{array}
\end{displaymath}
\item $m=n=0$,
\begin{displaymath}
\begin{array}{c}
q_{0,0}^{(0)}=\bar{a}(b+\bar{b})\bar{\lambda}_{1}\bar{\lambda}_{2}+a(b+\bar{b})\bar{\lambda}_{1}\bar{\lambda}_{2}=\bar{\lambda}_{1}\bar{\lambda}_{2},\,q_{1,0}^{(0)}=\bar{a}(b+\bar{b})\lambda_{1}\bar{\lambda}_{2}+a(b+\bar{b})\lambda_{1}\bar{\lambda}_{2}=\lambda_{1}\bar{\lambda}_{2},\\
q_{0,1}^{(0)}=\bar{a}(b+\bar{b})\lambda_{2}\bar{\lambda}_{1}+a(b+\bar{b})\lambda_{2}\bar{\lambda}_{1}=\lambda_{2}\bar{\lambda}_{1},\,q_{1,1}^{(0)}=\bar{a}(b+\bar{b})\lambda_{2}\lambda_{1}+a(b+\bar{b})\lambda_{2}\lambda_{1}=\lambda_{2}\lambda_{1}.
\end{array}
\end{displaymath}
\end{enumerate}
Following \cite{fayo}, the stability condition is $\lambda_{1}(\bar{a}+a\bar{b})<ab\bar{\lambda}_{1}$, $\lambda_{2}(a+\bar{a}\bar{b})<\bar{a}b\lambda_{2}$.
%

The balance principle enables us to show that $\pi_{m}^{(1)}=(\tau_{1}^{-1})^{m}C_{1}$, $m\geq 1$, $\pi_{n}^{(2)}=(\tau_{2}^{-1})^{n}C_{2}$, $n\geq 1$, where 
\begin{displaymath}
    \tau_{1}^{-1}=\frac{\lambda_{1}(\bar{a}+a\bar{b})}{ab\bar{\lambda}_{1}},\,\, \tau_{2}^{-1}=\frac{\lambda_{2}(a+\bar{a}\bar{b})}{\bar{a}b\lambda_{2}}.
\end{displaymath}
\begin{figure}[ht!]
\centering
\includegraphics[scale=0.5]{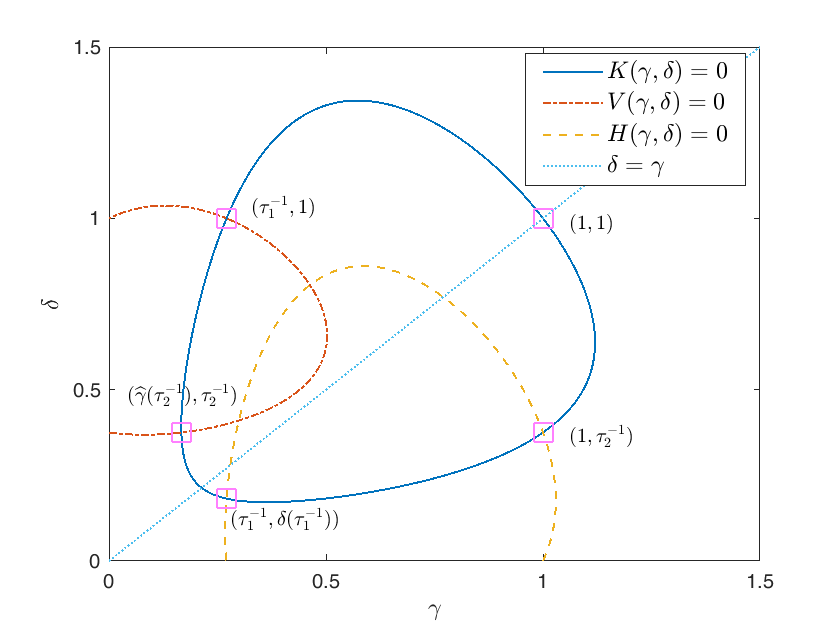}
\caption{The curves $K(\gamma,\delta)=0$, $H(\gamma,\delta)=0$, $V(\gamma,\delta)=0$, for $\lambda_{1}=0.2$, $\lambda_{2}=0.15$, $a=0.6$, $b=0.8$, for the model with a false service initiation, where $\delta(\tau_{1}^{-1})=\frac{q_{-1,1}+q_{0,1}\tau_{1}+q_{1,1}\tau_{1}^{2}}{\tau_{1}(q_{0,-1}+q_{1,-1}\tau_{1})}$, $\widehat{\gamma}(\tau_{2}^{-1})=\frac{q_{1,-1}+q_{1,0}\tau_{2}+q_{1,1}\tau_{2}^{2}}{\tau_{2}(q_{-1,0}+q_{-1,1}\tau_{2})}$.}\label{pv1}
\end{figure}


We  further note that this random walk does not satisfy  Conditions A, B, C, D:
\begin{itemize}
\item Condition A is not satisfied, since $q_{1,1}>0$.
\item Condition B.1 is not satisfied, since
\begin{enumerate}
\item from each point on the horizontal boundary, $q_{1,0}^{(h)}+q_{1,1}^{(h)}=q_{1,0}+q_{1,1}+q_{1,-1}=\lambda_{1}(\bar{a}+a\bar{b})$ (as in \cite{devos}), \textbf{but} now $q_{1,1}^{(h)}\neq q_{1,0}$, $q_{1,0}^{(h)}\neq q_{1,-1}$ (although $q_{-1,0}=q_{-1,0}^{(h)}$, $q_{-1,1}=q_{-1,1}^{(h)}$),
\item similarly, from each point on the vertical boundary, $q_{0,1}^{(v)}+q_{1,1}^{(v)}=q_{0,1}+q_{1,1}+q_{-1,1}=\lambda_{2}(a+\bar{a}\bar{b})$  (as in \cite{devos}), \textbf{but} now $q_{1,1}^{(v)}\neq q_{0,1}$, $q_{0,1}^{(v)}\neq q_{-1,1}$ (although $q_{0,-1}=q_{0,-1}^{(v)}$, $q_{1,-1}=q_{1,-1}^{(v)}$).
\end{enumerate}
\item Moreover, Condition B.2 is partly satisfied. Indeed, note that $q_{0,1}^{(0)}+q_{0,1}=q_{0,1}^{(h)}+q_{0,1}^{(v)}$, $q_{1,0}^{(0)}+q_{1,0}=q_{1,0}^{(h)}+q_{1,0}^{(v)}$, $q_{0,0}^{(0)}+q_{0,0}=q_{0,0}^{(h)}+q_{0,0}^{(v)}$, \textbf{but} $q_{1,1}^{(0)}\neq q_{1,1}^{(h)}+q_{1,1}^{(v)}$ (actually $q_{1,1}^{(0)}+q_{1,1}=q_{1,1}^{(h)}+q_{1,1}^{(v)}$).
\item Condition C is also not satisfied, e.g., $q_{1,-1}q_{-1,1}=ab\lambda_{1}\lambda_{2}\bar{a}b\bar{\lambda}_{1}\bar{\lambda}_{2}<(ab\lambda_{1}\lambda_{2}+\bar{b}\lambda_{1}\lambda_{2})\bar{a}b\bar{\lambda}_{1}\bar{\lambda}_{2}=q_{0,1}q_{0,-1}$.
\end{itemize}

So it is expected that the finite compensation approach does not work: Starting with the usual steps, the compensation procedure will fail. In particular, starting with the initial solution,
\begin{equation}
x_{1}(m,n)=\left\{\begin{array}{ll}
c_{0}\gamma_{0}^{m}\delta_{0}^{n},&m,n>0,\\
e_{0}\gamma_{0}^{m},&m>0,n=0.
\end{array}\right.
\label{inif}
\end{equation}
with $(\gamma_{0},\delta_{0})=(\tau_{1}^{-1},\delta(\tau_{1}^{-1}))$, and substituting \eqref{inif} in the horizontal boundary equations, results in two equations for $e_{0}$:
\begin{displaymath}
\begin{array}{rl}
e_{0}=&c_{0}\delta_{0}w_{1}(\gamma_{0}),\vspace{2mm}\\
e_{0}=&c_{0}\widehat{w}_{2}(\gamma_{0}),
\end{array}
\end{displaymath}
where $w_{1}(\gamma)$ is as in \eqref{mko}, and $\widehat{w}_{2}(\gamma)=\frac{q_{0,1}\gamma+q_{-1,1}\gamma^{2}+q_{1,1}}{\gamma(q_{0,1}^{(h)}+q_{-1,1}^{(h)}\gamma)+q_{1,1}^{(h)}}$. Numerical examples show that $\delta_{0}w_{1}(\gamma_{0})\neq \widehat{w}_{2}(\gamma_{0})$, so we cannot find a single $e_{0}$ satisfying the horizontal boundary equations.
Our intuition indicates that the violation of Condition A (i.e., $q_{1,1}>0=q_{-1,-1}$) is a major cause for not having a solution as the one given in Subsection \ref{prel}.  

\subsection{Removing Condition A: $q_{1,1}>0$, and $q_{-1,-1}>0$}\label{genn}
In Subsections \ref{pairedn} (i.e., $q_{-1,-1}>0$, $q_{1,1}=0$), \ref{ne} (i.e., $q_{-1,-1}=0$, $q_{1,1}>0$), we considered two queueing models, resulting in random walks where condition A is  relaxed. We now assume that Condition A is no longer valid at all, i.e., $q_{1,1}>0$, and $q_{-1,-1}>0$. We will apply the same steps as in Section \ref{general}, and focus on constructing a a single product-form invariant measure by applying the finite compensation procedure. Further development will be required for the case with more than one product-form term, but this attempt is postponed as future work. We summarize the following the basic steps.
\begin{itemize}
    \item As a first step, we have to ensure that the marginal distributions are geometric. In order to do so, we update Condition B (see Section \ref{general}) as follows:
    \begin{enumerate}
        \item Condition B.1: \begin{displaymath}
        \begin{array}{rlcrl}
             q_{1,1}^{(h)}=&q_{1,0}+q_{1,1},&&q_{1,1}^{(v)}=&q_{0,1}+q_{1,1},  \\
             q_{1,0}^{(h)}=&q_{1,-1},&&q_{0,1}^{(v)}=&q_{-1,1},\\
             q_{-1,1}^{(h)}=&q_{-1,1},&&q_{1,-1}^{(v)}=&q_{1,-1},\\
             q_{-1,0}^{(h)}=&q_{-1,0}+q_{-1,-1},&&q_{0,-1}^{(v)}=&q_{0,-1}+q_{-1,-1}.
        \end{array}
        \end{displaymath} 
        \item Condition B.2:\begin{displaymath}
        \begin{array}{rl}
             q_{1,0}^{(0)}+q_{1,0}=&q_{1,0}^{(h)}+q_{1,0}^{(v)},  \\
             q_{0,1}^{(0)}+q_{0,1}=&q_{0,1}^{(h)}+q_{0,1}^{(v)}, \\
             q_{1,1}^{(0)}+q_{1,1}=&q_{1,1}^{(h)}+q_{1,1}^{(v)}.
        \end{array}
        \end{displaymath}
    \end{enumerate}
    Under the updated Condition B, we have
    \begin{displaymath}
    \begin{array}{rl}
         q_{1,0}^{(0)}+q_{1,1}^{(0)}=&q_{1,-1}+q_{0,1}+q_{1,1}+q_{1,0}^{(v)},  \\
         q_{0,1}^{(0)}+q_{1,1}^{(0)}=&q_{-1,1}+q_{1,0}+q_{1,1}+q_{0,1}^{(h)}.
    \end{array}
    \end{displaymath}
    Thus, under condition B, a simple balance argument ensures that the marginal distributions are geometric:
    \begin{displaymath}
    \begin{array}{rl}
         \pi_{m}^{(1)}=&C_{1}\rho_{1}^{m},\,m\geq 1,  \\
        \pi_{n}^{(2)}=&C_{2}\rho_{2}^{n},\,n\geq 1, 
    \end{array}
    \end{displaymath}
    where  $\rho_{1}:=\frac{q_{1,0}+q_{1,-1}+q_{1,1}}{q_{-1,0}+q_{-1,1}+q_{-1,-1}}<1$, $\rho_{2}:=\frac{q_{0,1}+q_{-1,1}+q_{1,1}}{q_{0,-1}+q_{1,-1}+q_{-1,-1}}<1$. 
    \item Following the lines of Lemma \ref{lem1}, we can show that there exists a single product-form $\gamma^{m}\delta^{n}$, $0<|\gamma|,|\delta|<1$ that satisfies the updated $K(\gamma,\delta)=0$, $H(\gamma,\delta)=0$ (i.e., those with $q_{1,1}>0$, $q_{-1,-1}>0$). In particular, $\gamma_{0}=\rho_{1}$, and $\delta_{0}:=\frac{\gamma_{0}^{2}q_{-1,1}+\gamma_{0} q_{0,1}+q_{1,1}}{\gamma_{0}^{2}q_{-1,-1}+\gamma_{0} q_{0,-1}+q_{1,-1}}$. 
    \item Following the lines in Proposition \ref{prop1}, an initial solution satisfying the balance equations in the interior and the horizontal boundary is:
    \begin{equation}
    x(m,n)=\left\{\begin{array}{ll}
         c_{0}\gamma_{0}^{m}\delta_{0}^{n},&m,n>0,  \\
        e_{0}\gamma_{0}^{m}, &m>0,n=0,
    \end{array}\right.\label{ella}
    \end{equation}
    where now
    \begin{displaymath}\begin{array}{c}
    e_{0}=c_{0}\frac{\gamma_{0}^{2}q_{-1,1}+\gamma_{0} q_{0,1}+q_{1,1}}{\gamma_{0}^{2}q_{-1,1}^{(h)}+\gamma_{0} q_{0,1}^{(h)}+q_{1,1}^{(h)}}=c_{0}\delta_{0}\frac{\gamma_{0}^{2}q_{-1,-1}+\gamma_{0} q_{0,-1}+q_{1,-1}}{\gamma_{0}(1-q_{0,0}^{(h)})-q_{1,0}^{(h)}-\gamma_{0}^{2}q_{-1,0}^{(h)}},
    \end{array} \end{displaymath}
    thanks to Condition B and the definition of $\delta_{0}$ (as in Section \ref{general}).
    \item Since our aim is to construct a random walk with a single product-form invariant measure, we have to set a stopping criterion for the compensation approach  (i.e., a new Condition C).
    This stopping criterion is derived by requiring $\delta_{0}=\rho_{2}$. In such a case, setting $\delta=\rho_{2}$, the kernel equation $K(\gamma,\delta)=0$ gives  $\gamma=\rho_{1}$ or $\gamma=1$. So, starting with $\gamma_{0}=\rho_{1}$, we ensure that $\delta_{0}=\rho_{2}$, when the following condition among the transition probabilities in the interior is imposed (i.e., the new Condition C):
    \begin{displaymath}
    \begin{array}{c}
         q_{-1,1}q_{1,-1}(1-q_{0,0})+q_{-1,1}q_{1,0}q_{0,-1}+q_{1,-1}q_{-1,0}q_{0,1}\vspace{2mm}\\=q_{-1,-1}q_{1,1}(1-q_{0,0})+q_{-1,-1}q_{1,0}q_{0,1}+q_{1,1}q_{-1,0}q_{0,-1}.
    \end{array}
    \end{displaymath}
    The above expression is derived by  setting $\frac{q_{1,1}+q_{-1,1}+q_{0,1}}{q_{-1,-1}+q_{1,-1}+q_{0,-1}}=\frac{\gamma_{0}^{2}q_{-1,1}+\gamma_{0} q_{0,1}+q_{1,1}}{\gamma_{0}^{2}q_{-1,-1}+\gamma_{0} q_{0,-1}+q_{1,-1}}$, and proceeding with straightforward computations. 
    \item Now by applying a vertical boundary compensation step, the coefficient of the additional term vanishes, and thus, compensation stops. For $m=0$, $n>0$, $x(m,n)=z_{1}\delta_{0}^{n}$, with $\delta_{0}=\rho_{2}$, where
    \begin{displaymath}\begin{array}{c}
    z_{1}=c_{0}\frac{\delta_{0}^{2}q_{1,-1}+\delta_{0} q_{1,0}+q_{1,1}}{\delta_{0}^{2}q_{1,-1}^{(v)}+\delta_{0} q_{1,0}^{(v)}+q_{1,1}^{(v)}}=c_{0}\gamma_{0}\frac{\delta_{0}^{2}q_{-1,-1}+\delta_{0} q_{-1,0}+q_{-1,1}}{\delta_{0}(1-q_{0,0}^{(v)})-\delta_{0}^{2}q_{0,-1}^{(v)}+q_{0,1}^{(v)}},
   \end{array} \end{displaymath}
    thanks to the kernel equation $K(\gamma_0,\delta_0)=0$ and Condition B.
    \item Condition C ensures that $e_{0}=z_{1}$ (as in Section \ref{general}). Thus, the solution
    \begin{equation}
    x(m,n)=\left\{\begin{array}{ll}
         c_{0}\gamma_{0}^{m}\delta_{0}^{n},&m,n>0,  \\
        e_{0}\gamma_{0}^{m}, &m\geq 0,n=0,\\
        z_{1}\delta_{0}^{n},&m=0,n\geq 0,
    \end{array}\right.\label{ella1}
    \end{equation}
    satisfies the interior, the horizontal and the vertical boundary equations, except those at points $(1,0)$, $(0,1)$, $(1,1)$, $(0,0)$.
    \item We need an updated Condition D, so that \eqref{ella1} also satisfies the boundary equations at points $(1,0)$, $(0,1)$, $(1,1)$, $(0,0)$. By repeating exactly the same steps as those in Section \ref{general} (and omitting further details), updated Condition D should read:
    \begin{displaymath}
    \begin{array}{rl}
         q_{0,1}^{(h)}=&q_{0,1}+\frac{q_{-1,1}q_{1,0}\gamma_{0}}{q_{1,1}+q_{0,1}\gamma_{0}}, \\
          q_{1,0}^{(v)}=&q_{1,0}+\frac{q_{1,-1}q_{0,1}\delta_{0}}{q_{1,1}+q_{1,0}\delta_{0}}.
    \end{array}
    \end{displaymath}
    Finally, by using \eqref{ella1}, and the normalization equation, we can determine coefficient $c_{0}$. 
\end{itemize}

We conclude that the selection of proper boundary transition probabilities is essential for the construction of random walks with an invariant measure written as a linear combination of finitely many product-form terms. Moreover, it seems to be possible to violate certain conditions and apply a finite compensation procedure, although one need to appropriately adapt the other conditions in order to obtain an invariant measure written as a finite linear combination of product-form terms. 

In this subsection, we briefly showed how the conditions have to be adapted when condition A is violated (i.e., when $q_{1,1}>0$, $q_{-1,-1}>0$), with ultimate goal to construct a random walk with a single product-form invariant measure. Further work is required to understand how to adapt the other conditions (or to introduce new ones), in order to construct random walks with an invariant measure in the form of a finite sum of more than one product-form term. This attempt is postponed as a future work. 
\section*{Acknowledgement} The authors gratefully acknowledge stimulating discussions with Brian Fralix (School of Mathematical and Statistical Sciences, Clemson University, Clemson, SC, USA), who provided insight and expertise in the
course of this research. The authors would also like to thank the Guest Editors and the Reviewers for the insightful remarks, which helped to improve the
original exposition.

\bibliographystyle{abbrv}

\bibliography{draftv2arxiv}
\end{document}